\definecolor{vg}{rgb}{0.0, 0.4, 0.1}
\newcommand{\N}[1]{\mathbb{N}^{#1}}
\newcommand{\R}[1]{\mathbb{R}^{#1}}
\renewcommand{\S}[1]{\mathbb{S}^{#1}}
\newcommand{\cC}{\mathcal C}
\newcommand{\cH}{\mathcal H}
\newcommand{\cL}{\mathcal L}
\newcommand{\cM}{\mathcal M}
\newcommand{\bulk}{\mathrm{bulk}}
\newcommand{\de}{\mathrm d}
\newcommand{\surface}{\mathrm{surf}}
\newcommand{\eps}{\varepsilon}
\newcommand{\norma}[1]{\left\lVert#1\right\rVert}
\newcommand{\tr}{\operatorname{tr}}
\newcommand{\wto}{\rightharpoonup}
\newcommand{\lwto}{\longrightharpoonup}
\newcommand{\wsto}{\stackrel{*}{\rightharpoonup}}
\newcommand{\wSD}[1]{\stackrel[SD]{#1}\lwto}
\newcommand{\Hto}{\stackbin[H]{}{\to}}
\renewcommand{\geq}{\geqslant}
\renewcommand{\leq}{\leqslant}
\newcommand{\longrightharpoonup}{\relbar\joinrel\rightharpoonup}
\newcommand{\weakst}{\stackrel{*}{\longrightharpoonup}}
\newcommand{\pweak}{\stackrel{p}{\longrightharpoonup}}
\newcommand{\average}{{\mathchoice {\kern1ex\vcenter{\hrule
				height.4pt width 8pt depth0pt}
			\kern-11pt} {\kern1ex\vcenter{\hrule height.4pt width 4.3pt
				depth0pt} \kern-7pt} {} {} }}
\newcommand{\ave}{\average\int}
\newcommand{\res}{\mathop{\hbox{\vrule height 7pt width .5pt depth
			0pt\vrule height .5pt width 6pt depth0pt}}\nolimits}
\mathchardef\emptyset="001F
\providecommand{\U}[1]{\protect\rule{.1in}{.1in}}
\numberwithin{equation}{section}
\newtheorem{definition}{Definition}[section]
\newtheorem{theorem}[definition]{Theorem}
\newtheorem{lemma}[definition]{Lemma}
\theoremstyle{definition} {\newtheorem{remark}[definition]{Remark}}
\title[A Global Method for Relaxation for Multi-levelled Structured Deformations
]{A Global Method for Relaxation for Multi-levelled Structured Deformations}
\author[A.~C.~Barroso]{Ana Cristina Barroso}
\address[A.~C.~Barroso]{Departamento de Matem\'atica and CMAFcIO, 
	Faculdade de Ci\^encias da Universidade de Lisboa,
	Campo Grande, Edif\' \i cio C6, Piso 1,
	1749-016 Lisboa, Portugal}
\email{acbarroso@ciencias.ulisboa.pt}
\author[J.~Matias]{Jos\'{e} Matias}
\address[J.~Matias]{Departamento de Matem\'atica, Instituto Superior T\'ecnico, Av.~Rovisco Pais, 1, 1049-001 Lisboa, Portugal}
\email{jose.c.matias@tecnico.ulisboa.pt}
\author[E.~Zappale]{Elvira Zappale}
\address[E.~Zappale]{Dipartimento di Scienze di Base ed applicate per l'Ingegneria, Sapienza - Universit\`{a} di Roma, Via A. Scarpa, 16, 00161, Roma, Italy}
\email{elvira.zappale@uniroma1.it}
\date{\today}
\subjclass[2020]
{49J45, 46E30, 74A60, 74M99, 74B20.}
\keywords{global method for relaxation, hierarchical system of structured deformations, multiscale geometry, elasticity, disarrangements, integral representation}
\begin{document}

\begin{abstract}
We prove an integral representation result for a class of variational functionals appearing in the framework of hierarchical systems of structured deformations via a global method for relaxation.  
Some applications to specific relaxation problems are also provided.
\end{abstract}

\maketitle

\tableofcontents

\section{Introduction}

Our purpose in this paper is to establish a global method for relaxation
applicable in the context of multi-levelled structured deformations. The aim is to
provide an integral representation for a class of functionals, defined in the set of $(L+1)$-level (first-order) structured deformations (see Definition \ref{Def2.1}), via the study of a related local Dirichlet-type problem, and to identify the corresponding relaxed energy densities, under quite general assumptions (we refer to \cite{BFM, BFLM} for the introduction of the method in the $BV$ and $SBV_p$ contexts). 

First-order structured deformations were introduced by Del Piero and Owen \cite{DPO1993} in order to provide a mathematical framework that captures the effects at the macroscopic level of smooth deformations and of non\--\-smooth deformations (disarrangements) at one sub-macroscopic level. 
In the classical theory of mechanics, the deformation of the body is characterised exclusively by the macroscopic deformation field~$g$ and its gradient~$\nabla g$. 
In the framework of structured deformations, an additional geometrical field~$G$ is introduced with the intent to capture the contribution at the macroscopic scale of smooth sub-macroscopic changes, while the difference $\nabla g-G$ captures the contribution at the macroscopic scale of non-smooth sub-macroscopic changes, such as slips and separations (referred to as \emph{disarrangements} \cite{DO2002}). The field~$G$ is called the deformation without disarrangements, and, heuristically, the disarrangement tensor $M\coloneqq \nabla g-G$ is an indication of how ``non-classical'' a structured deformation is.
This broad theory is rich enough to address mechanical phenomena such as elasticity, plasticity and the behaviour of crystals with defects.

The variational formulation for first-order structured deformations in the $SBV$ setting was first addressed by Choksi and Fonseca \cite{CF1997} where a (first-order) structured deformation is defined to be a pair 
$(g,G)\in SBV(\Omega;\R{d})\times L^p(\Omega;\R{d\times N}), \; p \geq 1.$
Departing from 
a functional which associates to any deformation $u$ of the body an energy featuring a bulk contribution, which measures the deformation (gradient) throughout the whole body, and an interfacial contribution, accounting for the energy needed to fracture the body, an integral representation for the ``most economical way'' to approach a given structured deformation was derived. 

The theory of first-order structured deformations was broadened by Owen and Paroni \cite{OP2000} to second\--\-order structured deformations, which also account for other geometrical changes, such as curvature, at one sub-macroscopic level. The variational formulation in the $SBV^2$ setting for second-order structured deformations was carried out by Barroso, Matias, Morandotti and Owen \cite{BMMO2017}. This last formulation allows for jumps on both the approximating fields, as well as on their gradients. In \cite{MMO}, and the references therein, the interested reader can find a comprehensive survey about the theory of structured deformations, as well as applications.

In the recent contribution \cite{DO2019}, Deseri and Owen took a further step, extending the theory of \cite{DPO1993} and the field theory in \cite{DO2003},
to hierarchical systems of structured
deformations in order to include the effects of disarrangements at more than one sub-macroscopic level.
Indeed, in the context of dynamics and large isothermal deformations, the field theory in \cite{DO2003} is a first step in a programme to employ structured deformations of continua in order to study the evolution of bodies that undergo smooth deformations at a macroscopic level, but can also dissipate energy at a sub-macroscopic level.
However, many natural and man-made materials, for example, muscles, cartilage, bones, plants and some biomedical materials, exhibit different levels of
disarrangements.  Moreover, toughening mechanisms, characterized by distributions
of sub-macroscopic separations at the various sub-macroscopic levels, do closely
follow the internal arrangements of such materials. For these reasons, Deseri and Owen extended the work in \cite{DO2003}, by proposing the generalized field theory in \cite{DO2019} which sharpens the description of the physical nature of dissipative mechanisms that can arise, including the effects of different levels of microstructure.

The main aim of our work is to provide a mathematical framework to address more general problems in the context of the richer theory in \cite{DO2019}. In particular, this work provides a tool allowing for the extension, to different sub-macroscopic levels, of the applications of the theory of structured deformations collected in \cite{MMO}.

In the setting of hierarchical systems of structured deformations, a first-order structured deformation $(g,G)$ corresponds to a two-level hierarchical system (the macroscopic level, $g$, plus one microscopic level, $G$), while, for $L>1$, an 
$L+1$-level hierarchical system consists of $(g, G_1, \ldots, G_{L})$, where each $G_i, i = 1, \ldots L$,  provides the effects at the macro-level of the corresponding sub-macroscopic level $i$.

A first approach to the mathematical formulation of hierarchical systems of structured deformations was considered in \cite{BMMOZ2022}, where the authors provide an approximation theorem for an $(L + 1)$-level structured deformation 
$(g,G_1, \ldots G_L)$ and propose the assignment of an energy to this
multi-levelled structured deformation by means of a well-posed recursive
relaxation process. This consists of iterated applications of the integral representation result for first-order structured deformations in \cite{CF1997}, considering at each step $k+1$ new energy densities given by \cite[Theorems 2.16 and 2.17]{CF1997} assuming as parameters the matrices in the position of the fields $G_i, i=1,\dots, k$, see \cite[Theorem 3.4]{BMMOZ2022}.

The global method for relaxation, introduced by Bouchitt\'e, Fonseca and Mascarenhas \cite{BFM} in the $BV$ setting, and later addressed in the $SBV_p$ setting by Bouchitt\'e, Fonseca, Leoni and Mascarenhas \cite{BFLM}, provides a general method for the identification of the integral representation of a class of
functionals defined on $BV(\Omega; \mathbb{R}^d)\times {\mathcal O}(\Omega)$, where ${\mathcal O}(\Omega)$ represents the family of open sets contained in $\Omega$. 
Since its inception, this global method for relaxation has known numerous applications and generalizations, in particular, very recently it was used in the context of variable exponent spaces, see \cite{SSS}, spaces of bounded deformations, see \cite{CFVG} and \cite{BMZ}, and  second-order structured deformations in the space $BH(\Omega; \mathbb{R}^d)\times {\mathcal O}(\Omega)$ by Fonseca, Hagerty and Paroni \cite{FHP}. Note that in the $BH$ case, only jumps on gradients are allowed.
 
In this work we obtain a global method for relaxation appropriate for the study of functionals defined on the space of $L +1$-levelled hierarchical systems of first-order structured deformations of any order $L \geq 1$.
Our method is general and covers, with the same proof, any choice of $L \in \mathbb N$, with no need of iterating procedures as a standard relaxation approach would demand (see \cite{BMMOZ2024} and the previous iterative method proposed in \cite{BMMOZ2022}).
It is also worthwhile to stress that, although the functional spaces used to model structured deformations are $BV$ and $L^p$, due to the nature of structured deformations (cf. Definition \ref{S000}), neither the global method for relaxation in \cite{BFM}, nor the one in \cite{BFLM}, can be simply extended in a naive way to consider product topologies. In particular, the considered convergences are not the ones used in the classical $SBV_p$ setting, hence Ambrosio's compactness theorem (see \cite[Theorems 4.7 and 4.8]{AFP}) cannot be applied, due to the lack of a uniform control on the length of the singular set of the approximant bounded energy sequences (cf. the coercivity condition \eqref{(psi1)}). This is, in fact, the reason why in the structured deformation setting there is a distinction between the part of the deformation arising as the limit  of the entire approximants and the part emerging as the limit of only their smooth parts (see the introductory comments in \cite{CF1997} for further details).

We also provide several applications, with the aim of showing that our main integral representation theorem, Theorem \ref{GMthmHSD}, already covers, and improves, in a unified way, several results available in the literature in the first-order structured deformation context.

As a first application of our general theorem, we are able to extend the integral representation for first-order structured deformations proved in \cite[Theorems 2.16 and 2.17]{CF1997}, and later generalized in \cite[Theorem 5.1]{MMOZ} to allow for an explicit dependence on the spatial variable,
to the case of Carath\'{e}odory bulk energy densities. This latter setting is more realistic, for instance, it allows for the modelling of materials with a very different behaviour from one point to another, as multigrain-type materials, or other types of mixtures, which also appear in the optimal design context (cf. for instance \cite{MMZ}). On the other hand, the assumptions on the surface energy density can be weakened, compared with \cite{CF1997}, although, in the inhomogeneous setting, the continuity with respect to the spatial variable is still needed due to the fact that, in this case, it is meaningless to consider Lebesgue measurability for elliptic integrands defined on $N-1$ dimensional sets. See Theorem \ref{representation} for the precise statement. In particular, under the same set of hypotheses considered therein, we recover the formulae in \cite{CF1997} (see also
\cite{BMMO2017} and \cite{MMOZ} for the inhomogeneous setting). We further show that, in the case $p>1$, the cell formulae for the interfacial relaxed energy density in  \cite{CF1997} and \cite{MMOZ}, still hold when the bulk energy densities are Carath\'{e}odory (see Theorems \ref{representation} and \ref{contrepresentation}).

We stress that a standard relaxation approach, mimicking the arguments in \cite{CF1997}, could also be achieved in the measurable bulk energy setting, but this would require the proof of many auxiliary steps, while a global method approach is more direct.

We point out that each step of the recursive relaxation procedure for multi-levelled hierarchies of structured deformations, presented in \cite{BMMOZ2022}, whose densities, at each stage, satisfy hypotheses \eqref{(W1)_p}-\eqref{(psi4)}, also fits into the scope of Theorem \ref{GMthmHSD}. We refer to Subsection \ref{MSD} for 
more details and also for a different energetic formulation relying on Theorem \ref{GMthmHSD} in its full generality.

Another natural application of our abstract result is to homogenization problems, such as the one conside\-red in \cite{AMMZ}. In that paper, only the case $p>1$ was treated under uniform continuity hypotheses on the densities, while here we can extend the result to include also $p=1$, Carathéodory bulk densities and weaker assumptions on the elliptic integrands.
In this setting, we depart from an energy of the form
$$E_\varepsilon(u)\coloneqq \int_\Omega W(x/\varepsilon,\nabla u(x))\,\de x
+\int_{\Omega\cap S_u} \psi(x/\varepsilon,[u](x),\nu_u(x))\,\de\cH^{N-1}(x),$$
where $\varepsilon \to 0^+$. Besides a periodicity condition, the densities $W$ and $\psi$ satisfy other hypotheses (cf. Theorem \ref{homogenization}) which ensure that the relaxed functional \eqref{102hom} can be placed in the setting of our main theorem.

As a further application, we recover the integral representation for one of the relaxed energies in \cite{BMMO2017}. In this paper, an integral representation for the relaxation of an energy arising in the context of second-order structured deformations is obtained. A simple argument allows for the decomposition of the relaxed functional $I$ as the sum of two terms, $I = I_1 + I_2$. Although $I_1$ does not fit the scope of our global method result, due to the topology which is considered in its definition, we will show that Theorem \ref{GMthmHSD} applies to $I_2$ (in order to avoid conflicting notation to that introduced in \eqref{102}, in Section \ref{appl} $I_2$ will be denoted by $J$). As in some of the previously mentioned applications, our global method for relaxation applies under milder assumptions than those considered in \cite[Theorem 5.7]{BMMO2017},
recovering for our densities the same expressions that were deduced in \cite{BMMO2017} when those hypotheses are considered.  

A classical approach to a relaxation result for hierarchical systems of first-order structured deformations with an arbitrary number of levels $L$, and the comparison both with the method implied in \cite{BMMOZ2022} and with this abstract formulation, will be the subject of a forthcoming work \cite{BMMOZ2024}. We emphasize that the method in \cite{BMMOZ2024}, although it is expected to provide more explicit formulae, requires a direct proof for each choice of the level $L \in \mathbb N$ in a iterated way, while the global method does not require any iterative process as outlined in Subsection \ref{MSD}.

The paper is organized as follows: in Section \ref{prelim} we set the notation which will be used in the sequel and recall the notion of a multi-levelled structured deformation, as well as the approximation theorem for these deformations. In Section \ref{gm} we state and prove our main theorem (see Theorem \ref{GMthmHSD}), whereas
Section \ref{appl} is devoted to the aforementioned applications of our abstract result.

\section{Preliminaries}\label{prelim}

\subsection{Notation} 

We will use the following notations
\begin{itemize}
	\item $\mathbb N$ denotes the set of natural numbers without the zero element;
	\item $\Omega \subset \mathbb R^{N}$ is a bounded, connected open set with Lipschitz boundary; 
	\item $\mathbb S^{N-1}$ denotes the unit sphere in $\mathbb R^N$;
	\item $Q\coloneqq (-\tfrac12,\tfrac12)^N$ denotes the open unit cube of $\mathbb R^{N}$ centred at the origin; for any $\nu\in\mathbb S^{N-1}$, $Q_\nu$ denotes any open unit cube in $\mathbb R^{N}$ with two faces orthogonal to $\nu$; 
	\item for any $x\in\mathbb R^{N}$ and $\delta>0$, $Q(x,\delta)\coloneqq x+\delta Q$ denotes the open cube in $\mathbb R^{N}$ centred at $x$ with side length $\delta$; likewise $Q_\nu(x,\delta)\coloneqq x+\delta Q_\nu$;
	\item ${\mathcal O}(\Omega)$ is the family of all open subsets of $\Omega $, whereas ${\mathcal O}_\infty(\Omega)$ is the family of all open subsets of $\Omega $ with Lipschitz boundary; 
	\item $\mathcal L^{N}$ and $\mathcal H^{N-1}$ denote the  $N$-dimensional Lebesgue measure and the $\left(  N-1\right)$-dimensional Hausdorff measure in $\mathbb R^N$, respectively; the symbol $\de x$ will also be used to denote integration with respect to $\mathcal L^{N}$; 
	\item $\mathcal M(\Omega;\mathbb R^{d\times N})$ is the set of finite matrix-valued Radon measures on $\Omega$; $\mathcal M ^+(\Omega)$ is the set of non-negative finite Radon measures on $\Omega$;
	given $\mu\in\mathcal M(\Omega;\mathbb R^{d\times N})$,  
	the measure $|\mu|\in\mathcal M^+(\Omega)$ 
	denotes the total variation of $\mu$;
	\item $SBV(\Omega;\mathbb R^d)$ is the set of vector-valued \emph{special functions of bounded variation} defined on $\Omega$. 
	Given $u\in SBV(\Omega;\mathbb R^d)$, its distributional gradient $Du$ admits the decomposition $Du=D^au+D^su=\nabla u\cL^N+[u]\otimes\nu_u\mathcal H^{N-1}\res S_u$, where $S_u$ is the jump set of~$u$, $[u]$ denotes the jump of~$u$ on $S_u$, and $\nu_u$ is the unit normal vector to $S_u$; finally, $\otimes$ denotes the dyadic product; 
	\item $L^p(\Omega;\mathbb R^{d\times N})$ is the set of matrix-valued $p$-integrable functions; 
	\item for $p\geq1$, $SD^p(\Omega)\coloneqq SBV(\Omega;\mathbb R^d)\times L^p(\Omega;\mathbb R^{d\times N})$ is the space of structured deformations $(g,G)$ (notice that $SD^1(\Omega)$ is the space $SD(\Omega)$ introduced in \cite{CF1997}); the norm in $SD(\Omega)$ is defined by $\norma{(g,G)}_{SD(\Omega)}\coloneqq\norma{g}_{BV(\Omega;\R{d})}+\norma{G}_{L^1(\Omega;\R{d\times N})}$;
	\item  $C$ represents a generic positive constant that may change from line to line.
\end{itemize}

\medskip 

A detailed exposition on $BV$ functions is presented in \cite{AFP}.

\medskip

The following result, whose proof may be found in \cite{FM1993}, will be used in the proof of Theorem \ref{GMthmHSD}.

\begin{lemma}\label{FM}
Let $\lambda$ be a non-negative Radon measure in $\mathbb R^N$. For $\lambda$ a.e. $x_0 \in \mathbb R^N$, for every $0 < \delta < 1$ and for every $\nu \in \mathbb S^{N-1}$, the following holds
$$\limsup_{\eps \to 0^+}\frac{\lambda(Q_\nu(x_0,\delta \eps))}{\lambda(Q_\nu(x_0,\eps))} \geq \delta^N,$$
so that
$$\lim_{\delta \to 1^-}\limsup_{\eps \to 0^+}\frac{\lambda(Q_\nu(x_0,\delta \eps))}{\lambda(Q_\nu(x_0,\eps))} = 1.$$
\end{lemma}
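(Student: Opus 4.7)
My plan is to invoke the Lebesgue decomposition $\lambda = f \mathcal{L}^N + \lambda^s$, with $f \in L^1_{\mathrm{loc}}(\mathbb R^N; [0,\infty))$ and $\lambda^s \perp \mathcal L^N$, and to partition $\mathbb R^N$, up to a $\lambda$-negligible set, into two classes of points: (a) those $x_0$ at which $f(x_0) \in (0,\infty)$ is the Lebesgue derivative of $\lambda$ and $\lambda^s$ has zero upper Lebesgue density; (b) those $x_0$ in the support of $\lambda^s$ at which the upper density of $\lambda$ equals $+\infty$. Both statements are first obtained along balls via the classical Besicovitch differentiation theorem (see e.g.\ \cite{AFP}) and then transferred to cubes \emph{of arbitrary orientation} via the elementary sandwich $B(x_0,\eps/2) \subset Q_\nu(x_0,\eps) \subset B(x_0,\eps\sqrt{N}/2)$, so that the exceptional $\lambda$-null set can be chosen independently of $\nu \in \S{N-1}$.

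In class (a) one simply writes
\begin{equation*}
\frac{\lambda(Q_\nu(x_0,\delta\eps))}{\lambda(Q_\nu(x_0,\eps))}
= \delta^N\cdot\frac{\lambda(Q_\nu(x_0,\delta\eps))/(\delta\eps)^N}{\lambda(Q_\nu(x_0,\eps))/\eps^N}
\end{equation*}
and lets $\eps\to 0^+$, obtaining in fact the full limit $\delta^N$. In class (b) I would argue by contradiction: if the stated inequality failed at some such $x_0$, there would exist $\eta>0$ and $\eps_0>0$ such that $\lambda(Q_\nu(x_0,\delta\eps))\leq (\delta^N-\eta)\,\lambda(Q_\nu(x_0,\eps))$ for every $\eps\in(0,\eps_0]$; iterating this bound along the geometric sequence $\eps_k\coloneqq \delta^k\eps_0$ yields
\begin{equation*}
\frac{\lambda(Q_\nu(x_0,\eps_k))}{\eps_k^N}
\leq \Bigl(1-\tfrac{\eta}{\delta^N}\Bigr)^{k}\frac{\lambda(Q_\nu(x_0,\eps_0))}{\eps_0^N}\lto 0 \quad \text{as } k\to\infty,
\end{equation*}
in flat contradiction with the infinite upper density of $\lambda$ at $x_0$.

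The second assertion is then immediate: since $Q_\nu(x_0,\delta\eps)\subset Q_\nu(x_0,\eps)$, the ratio is trivially bounded above by $1$ for every $\delta\in(0,1)$, while the first part bounds the $\limsup$ from below by $\delta^N$, which tends to $1$ as $\delta\to 1^-$. I expect the only nontrivial point in the argument to be the bookkeeping required to obtain simultaneous control over all orientations $\nu$ within a single exceptional set, which is precisely what the ball-cube sandwich delivers; the iterative contradiction in class (b) is then routine.
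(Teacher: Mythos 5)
Your overall strategy is sound, but there are two genuine gaps in the execution, and the Lebesgue decomposition, though it works, is doing more than the statement requires.

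The more serious gap is in the class~(b) contradiction. You iterate the bound $\lambda(Q_\nu(x_0,\delta\eps))\leq(\delta^N-\eta)\lambda(Q_\nu(x_0,\eps))$ only along the geometric sequence $\eps_k=\delta^k\eps_0$ and conclude that $\lambda(Q_\nu(x_0,\eps_k))/\eps_k^N\to 0$. This does \emph{not}, by itself, contradict $\limsup_{\eps\to 0^+}\lambda(Q_\nu(x_0,\eps))/\eps^N=+\infty$: the limsup being infinite only guarantees blow-up along \emph{some} sequence, which could be disjoint from your geometric one. The fix is easy but must be stated: by monotonicity of $\eps\mapsto\lambda(Q_\nu(x_0,\eps))$, for $\eps\in(\eps_{k+1},\eps_k]$ one has
\begin{equation*}
\frac{\lambda(Q_\nu(x_0,\eps))}{\eps^N}\leq\frac{\lambda(Q_\nu(x_0,\eps_k))}{\eps_{k+1}^N}=\frac{1}{\delta^N}\cdot\frac{\lambda(Q_\nu(x_0,\eps_k))}{\eps_k^N},
\end{equation*}
so the \emph{full} limit of the density is zero, which does contradict the positive (indeed infinite) upper density.

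The second gap is in class~(a). The inclusion $B(x_0,\eps/2)\subset Q_\nu(x_0,\eps)\subset B(x_0,\eps\sqrt N/2)$ does \emph{not} transfer the ball density limit to an orientation-independent cube density limit; it only yields two-sided bounds off by dimensional constants $\omega_N/2^N$ and $\omega_N N^{N/2}/2^N$, so the quotient in your display need not converge to $\delta^N$. To get the genuine cube limit $\lambda(Q_\nu(x_0,\eps))/\eps^N\to f(x_0)$ for all $\nu$ with a common exceptional set, one should instead argue through Lebesgue points of $f$: $\eps^{-N}\int_{Q_\nu(x_0,\eps)}|f-f(x_0)|\,\de x\leq\eps^{-N}\int_{B(x_0,\eps\sqrt N/2)}|f-f(x_0)|\,\de x\to 0$ whenever $x_0$ is a Lebesgue point with respect to balls, and similarly $\lambda^s(Q_\nu(x_0,\eps))/\eps^N\leq\lambda^s(B(x_0,\eps\sqrt N/2))/\eps^N\to 0$.

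That said, once the class~(b) argument is repaired, you can dispense with the Lebesgue decomposition entirely: the iteration only requires that $\limsup_{\eps\to 0^+}\lambda(Q_\nu(x_0,\eps))/\eps^N>0$, which holds for $\lambda$-a.e.~$x_0$ (with the exceptional set independent of $\nu$, exactly because $\lambda(Q_\nu(x_0,\eps))\geq\lambda(B(x_0,\eps/2))$ and the positivity of the upper $\lambda$-density with respect to balls at $\lambda$-a.e.~point is a standard consequence of Besicovitch differentiation). The single contradiction argument then delivers the claimed $\limsup\geq\delta^N$ at every such $x_0$ and for every $\nu$ and $\delta$, and the second assertion follows trivially, as you note, from the bound $\lambda(Q_\nu(x_0,\delta\eps))\leq\lambda(Q_\nu(x_0,\eps))$. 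This streamlined version is closer in spirit to the original argument in \cite{FM1993}, which the paper simply cites for the proof.
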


\subsection{Approximation theorem for hierarchical (first-order) structured deformations}

\begin{definition}\label{Def2.1}
	For $L\in\N{}$, $p\geq 1$ and $\Omega\subset\R{N}$ a bounded connected open set, we define
	$$HSD_L^p(\Omega)\coloneqq SBV(\Omega;\R{d})\times \underbrace{L^p(\Omega;\R{d\times N})\times\cdots\times L^p(\Omega;\R{d\times N})}_{L\text{-times}}$$
	the set of \emph{$(L+1)$-level (first-order) structured deformations} on $\Omega$. 
	\end{definition} 

In the case $L=1$ and $p =1$, this space was introduced and studied in \cite{CF1997}, where it was denoted by $SD(\Omega)$. In particular, the following approximation result was shown (see \cite[Theorem 2.12]{CF1997}).

\begin{theorem}[Approximation Theorem in $SD(\Omega)$]\label{appTHMCF} 
For every $(g,G)\in SD(\Omega)$, there exists a sequence 
$u_n$ in $SBV(\Omega;\R{d})$ which converges to $(g,G)$ in the sense that 
\begin{equation*}
u_n \to g\quad\text{in $L^1(\Omega; \R{d})$}\qquad \text{and}\qquad \nabla u_n \wsto G\quad\text{in $\cM(\Omega;\R{d\times N}).$}
\end{equation*}
\end{theorem}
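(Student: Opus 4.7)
The plan is to reduce to the case of a constant target matrix and then exhibit an explicit sawtooth construction on small cubes. First I would use the density of $L^\infty$ piecewise constant matrix fields (with respect to a finite Lipschitz partition of $\Omega$) in $L^1(\Omega;\mathbb{R}^{d\times N})$, together with a diagonal extraction, to reduce the problem to approximating pairs $(g,G)$ in which $G-\nabla g$ takes the piecewise constant value $M_j\in\mathbb{R}^{d\times N}$ on each piece $\Omega_j$ of a finite Lipschitz partition of $\Omega$. By linearity of the weak-$\ast$ convergence of gradients, it then suffices to exhibit, for every constant $M\in\mathbb{R}^{d\times N}$ and every Lipschitz open set $U\subset\Omega$, a sequence $v_n\in SBV(U;\mathbb{R}^d)$ with $v_n\to 0$ uniformly (hence in $L^1$) and $\nabla v_n\,\mathcal{L}^N\wsto M\,\mathcal{L}^N\res U$ in $\mathcal M(U;\mathbb{R}^{d\times N})$; one then defines $u_n:=g+\widetilde v_n$ on $\Omega$, where $\widetilde v_n$ glues the local constructions across the interfaces of the partition.

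The core of the argument is the sawtooth construction for a constant $M$. Working row by row, let $\sigma\colon\mathbb{R}\to\mathbb{R}$ be the $1$-periodic sawtooth of unit slope (so $\sigma$ is affine on $(0,1)$ with $\sigma(0^+)=-\tfrac12$, $\sigma(1^-)=\tfrac12$), and set
\begin{equation*}
v_n^{(i)}(x):=\frac{1}{n}\,\sigma(n\,M_i\cdot x),\qquad i=1,\dots,d,
\end{equation*}
where $M_i$ is the $i$-th row of $M$. Then $\|v_n^{(i)}\|_{L^\infty}\leq \tfrac{1}{2n}$, $\nabla v_n^{(i)}=M_i$ $\mathcal{L}^N$-a.e., and the jump set is contained in a countable family of hyperplanes orthogonal to $M_i$, with $\mathcal H^{N-1}$-locally finite measure, so $v_n:=(v_n^{(1)},\ldots,v_n^{(d)})\in SBV_{\mathrm{loc}}(\mathbb{R}^N;\mathbb{R}^d)$. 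Since $v_n\to 0$ uniformly and $\nabla v_n\equiv M$, weak-$\ast$ convergence of $\nabla v_n\mathcal L^N$ to $M\mathcal L^N$ on any open set is immediate (the jumps $D^s v_n$ vanish in the sense of distributions because $v_n\to 0$ uniformly, so only the absolutely continuous part contributes in the limit).

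To globalize, cover each piece $\Omega_j$ by cubes of side $1/n$ and use a Lipschitz cutoff in a thin neighbourhood of $\partial\Omega_j$ to interpolate $v_n$ to zero, preserving the $SBV$ character across interfaces; because the neighbourhood has $\mathcal L^N$-measure $o(1)$, this modification does not affect the weak-$\ast$ limit. Setting $u_n:=g+\widetilde v_n$ one obtains $u_n\to g$ in $L^1(\Omega;\mathbb{R}^d)$ and $\nabla u_n\wsto \nabla g+(G-\nabla g)=G$ in $\mathcal M(\Omega;\mathbb{R}^{d\times N})$. A final diagonal argument with respect to the $L^1$-approximation of the initial $G$ by piecewise constant fields concludes the proof.

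The main delicate point I expect is the patching across the interfaces of the partition: the cutoff must preserve the $SBV$ regularity (no singular part other than a $\mathcal H^{N-1}$-rectifiable jump set) and must not introduce a spurious contribution in the weak-$\ast$ limit of the gradients. This is handled by choosing the cutoff region to shrink as $n\to\infty$ and by exploiting the uniform smallness $\|\widetilde v_n\|_\infty\lesssim 1/n$, so that both the Lebesgue measure of the transition layer and its contribution to $\int\varphi\,\nabla u_n\,\de x$ are infinitesimal for every test $\varphi\in C_0(\Omega)$.
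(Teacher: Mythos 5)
Your argument is correct, and it is a genuinely different route from the one the paper points to. The paper cites Theorem~2.12 of Choksi--Fonseca, whose proof rests on Alberti's theorem: for every $f\in L^1(\Omega;\mathbb{R}^{d\times N})$ there is $h\in SBV(\Omega;\mathbb{R}^d)$ with $\nabla h=f$ a.e.\ and $|Dh|(\Omega)\leq C\|f\|_{L^1(\Omega;\mathbb{R}^{d\times N})}$; applied to $f=G-\nabla g$ and combined with a piecewise constant approximation $h_n\to h$ in $L^1$, this yields $u_n:=g+h-h_n$ with $\nabla u_n=G$ exactly and $u_n\to g$ in $L^1$. Your proof replaces Alberti's non-elementary result with the explicit sawtooth $v_n^{(i)}=n^{-1}\sigma(nM_i\cdot x)$, which serves as a ready-made SBV potential for a constant matrix, and then reaches a general $G-\nabla g$ via $L^1$-density of piecewise constant fields followed by a diagonal extraction. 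The trade-off is clear: your route is longer and needs some bookkeeping (gluing across interfaces, diagonal), but it is elementary and self-contained, whereas the cited proof is shorter at the cost of importing Alberti's lemma, whose own proof is not trivial. Two small remarks. First, the cutoff near the interfaces is superfluous: since $\|v_n^{(j)}\|_{L^\infty}\leq 1/(2n)$ uniformly in $j$, simply juxtaposing the local sawtooths across the Lipschitz interfaces already yields an SBV function with $O(1/n)$ interfacial jumps and preserves the exact identity $\nabla u_n^{(k)}=\nabla g+P_k$, which makes the diagonal step cleaner (the gradient then does not depend on $n$ at all). Second, the parenthetical about $D^s v_n$ vanishing in the distributional limit is beside the point: the conclusion concerns $\nabla u_n$, the absolutely continuous density of $Du_n$, which in your construction equals $M$ identically, so for fixed $k$ there is nothing to pass to the limit in $n$; it is only the outer limit in $k$, controlled by $L^1$-density, that matters.
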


We now present the definition of convergence of a sequence of $SBV$ functions to an $(L+1)$-level structured deformation $(g,G_1,\ldots,G_L)$ belonging to either $HSD_L(\Omega)$ or $HSD_L^p(\Omega)$. 

\begin{definition}\label{S000}
	Let $L\in\N{}$, let $p>1$, let $(g,G_1,\ldots,G_L)\in HSD_L^{p}(\Omega)$, and let $\N{L}\ni(n_1,\ldots,n_L)\mapsto u_{n_1,\ldots,n_L}\in SBV(\Omega;\R{d})$ be a (multi-indexed) sequence. We say that $u_{n_1,\ldots,n_L}$ converges in the sense of $HSD_L^p(\Omega)$ to $(g,G_1,\ldots,G_L)$ if
	\begin{itemize}
		\item[(i)] $\displaystyle\lim_{n_1\to+\infty}\cdots\lim_{n_L\to+\infty} u_{n_1,\ldots,n_L} = g$, with each of the iterated limits in the sense of $L^1(\Omega;\R{d})$;
		\item[(ii)] for all $\ell=1,\ldots,L-1$, 
		$\displaystyle \lim_{n_{\ell+1}\to+\infty}\cdots\lim_{n_L\to+\infty} u_{n_1,\ldots,n_L} \eqqcolon g_{n_1,\ldots,n_\ell} \in SBV(\Omega;\R{d})$ and 
		$$\lim_{n_1\to+\infty}\cdots\lim_{n_{\ell}\to+\infty} \nabla g_{n_1,\ldots,n_\ell}=G_{\ell},$$ 
		with each of the iterated limits in the sense of weak convergence in  $L^p(\Omega;\R{d\times N})$;
		\item[(iii)] $\displaystyle \lim_{n_1\to+\infty}\cdots\lim_{n_L\to+\infty} \nabla u_{n_1,\ldots,n_L} = G_L$ with each of the iterated limits in the sense of weak convergence in  $L^p(\Omega;\R{d\times N})$;
	\end{itemize}
	we use the notation $u_{n_1,\ldots,n_L}\pweak(g,G_1,\ldots,G_L)$ to indicate this convergence.
	
	In the case $p=1$, if $(g,G_1,\ldots,G_L)\in HSD_L^1(\Omega)$ and if the weak $L^p$ convergences above are replaced by weak-* convergences in $\cM(\Omega;\R{d\times N})$, then we say that $u_{n_1,\ldots,n_L}$ converges in the sense of $HSD_L^1(\Omega)$ to $(g,G_1,\ldots,G_L)$ and we use the notation $u_{n_1,\ldots,n_L} \weakst (g,G_1,\ldots,G_L)$ to indicate this convergence.
\end{definition}

The sequential application of the idea behind the Approximation Theorem~\ref{appTHMCF} provides the method for constructing a (multi-indexed) sequence $u_{n_1,\ldots,n_L}$ that approximates an $(L+1)$-level structured deformation $(g,G_1,\ldots,G_L)$. We thus obtain the following result, whose proof may be found in \cite{BMMOZ2022}.

\begin{theorem}[Approximation Theorem for $(L+1)$-level structured deformations]\label{appTHMh}
	Let  $(g,G_1,\ldots,G_L)$ belong to $HSD_L^p(\Omega)$. 
	Then there exists a sequence $(n_1,\ldots,n_L)\mapsto u_{n_1,\ldots,n_L}\in SBV(\Omega;\R{d})$
	converging to $(g,G_1,\ldots,G_L)$ in the sense of Definition~\ref{S000}.
\end{theorem}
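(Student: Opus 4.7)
The plan is to iteratively apply the Approximation Theorem \ref{appTHMCF} exactly $L$ times. The key structural observation is that Theorem \ref{appTHMCF} produces approximants in $SBV(\Omega;\R{d})$, so each output can serve as the first component of a new pair in $SD(\Omega)$, allowing the construction to cascade. Setting $g_{\emptyset}\coloneqq g$, first apply Theorem \ref{appTHMCF} to the pair $(g,G_1)\in SD(\Omega)$ to obtain a sequence $\{g_{n_1}\}\subset SBV(\Omega;\R{d})$ with $g_{n_1}\to g$ in $L^1(\Omega;\R{d})$ and $\nabla g_{n_1}\to G_1$ weakly-* in $\cM(\Omega;\R{d\times N})$ (and weakly in $L^p$ when $p>1$, thanks to the uniform $L^p$ bound on the constructed gradients). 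Recursively, for $\ell=2,\ldots,L$ and each fixed multi-index $(n_1,\ldots,n_{\ell-1})$, apply Theorem \ref{appTHMCF} to the pair $(g_{n_1,\ldots,n_{\ell-1}},G_\ell)\in SD(\Omega)$ to produce $g_{n_1,\ldots,n_\ell}\in SBV(\Omega;\R{d})$ satisfying $g_{n_1,\ldots,n_\ell}\to g_{n_1,\ldots,n_{\ell-1}}$ in $L^1$ and $\nabla g_{n_1,\ldots,n_\ell}\to G_\ell$ weakly, as $n_\ell\to+\infty$. The desired sequence is then $u_{n_1,\ldots,n_L}\coloneqq g_{n_1,\ldots,n_L}$.

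Verifying conditions (i)--(iii) of Definition \ref{S000} reduces to a telescoping chase of iterated limits. For (ii), the innermost limit peels off one index, $\lim_{n_L} u_{n_1,\ldots,n_L}=g_{n_1,\ldots,n_{L-1}}$, and successive outer limits continue to strip indices down to $g_{n_1,\ldots,n_\ell}$; pushing one more step to the empty multi-index yields (i). For the gradient conditions in (ii) and (iii), the crucial point is that the innermost limit $\lim_{n_\ell}\nabla g_{n_1,\ldots,n_\ell}=G_\ell$ is, by construction, independent of $(n_1,\ldots,n_{\ell-1})$, so the remaining outer iterated limits collapse trivially to $G_\ell$.

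The construction is essentially a recursive bookkeeping argument, and I do not anticipate any conceptual obstacle. The one delicate point is that Theorem \ref{appTHMCF} is phrased with weak-* convergence of the gradients in $\cM(\Omega;\R{d\times N})$, whereas Definition \ref{S000} demands weak convergence in $L^p(\Omega;\R{d\times N})$ when $p>1$. This is handled by observing that the Choksi--Fonseca construction yields approximants with $\nabla g_{n_1,\ldots,n_\ell}$ uniformly bounded in $L^p$ whenever $G_\ell\in L^p$; then, along a subsequence, one extracts a weak $L^p$ limit which must coincide with the weak-* limit in $\cM$, thus upgrading the mode of convergence to the one required by the definition.
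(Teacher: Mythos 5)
Your proposal matches the approach the paper indicates (and attributes to \cite{BMMOZ2022}): iterate the Choksi--Fonseca Approximation Theorem \ref{appTHMCF} one level at a time, feeding each intermediate approximant $g_{n_1,\ldots,n_{\ell-1}}$ back in as the $SBV$ component of the pair $(g_{n_1,\ldots,n_{\ell-1}},G_\ell)\in SD(\Omega)$, and then verifying (i)--(iii) of Definition \ref{S000} by peeling off iterated limits. Your handling of the $p>1$ upgrade is also fine; in fact, the Alberti-type construction underlying Theorem \ref{appTHMCF} produces approximants whose absolutely continuous gradients equal $G_\ell$ $\mathcal{L}^N$-a.e., so the required weak $L^p$ (or weak-$*$ measure) convergence of the gradients is immediate once $G_\ell\in L^p(\Omega;\R{d\times N})$.
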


\section{The global method}\label{gm}

Motivated by the Approximation Theorem \ref{appTHMh}, let $p \geq1$ and let $\mathcal F: HSD_L^p(\Omega) \times\mathcal 
O(\Omega)\to [0, +\infty]$ be a functional satisfying the following hypotheses
\begin{enumerate}
	\item[(H1)] for every $(g, G_1, \dots, G_L) \in HSD^p_L(\Omega)$, $\mathcal F(g, G_1,\dots, G_L;\cdot)$ is the restriction to 
	$\mathcal O(\Omega)$ of a Radon measure; 
	\item[(H2)] for every $O \in \mathcal O(\Omega)$, if  $p > 1$, $\mathcal F(\cdot, \cdot, \dots; O)$ is $HSD^p_L$-lower semicontinuous, i.e., 
	if $(g, G_1,\dots, G_L) \in HSD_L^p(\Omega)$ and $(g^n, G_1^n,\dots, G^n_L)$ $\in HSD_L^p(\Omega)$ are such that $g_n \to g$ in $L^1(\Omega;\mathbb R^{d} )$, $G^n_i\rightharpoonup G^i$ in $L^p(\Omega;\mathbb R^{d \times N})$, as $n \to +\infty$, for every $i=1,\dots L$, then
	$$
	\mathcal F(g, G_1,\dots, G_L;O)\leq \liminf_{n\to +\infty}\mathcal F(g^n,G_1^n,\dots, G^n_L;O);
	$$
	the same holds in the case $p=1$, replacing the weak convergences
	$G^n_i\rightharpoonup G^i$ in $L^p(\Omega;\mathbb R^{d \times N})$, as $n \to +\infty$, for every $i=1,\dots L$, with weak star convergences in the sense of measures $\cM(\Omega;\R{d\times N})$;
	\item[(H3)] for all $O \in \mathcal O(\Omega)$, $\mathcal F(\cdot, \ldots, \cdot;O)$ is local, that is, if $g= u$, $G_1= U_1$, $\dots$, $G_L=U_L$ a.e. in $O$, then 
	$\mathcal F(g, G_1,\dots, G_L;O)= \mathcal F(u, U_1,\dots, U_L;O)$;
	\item[(H4)] there exists a constant $C>0$ such that
	\begin{align*}
	 \frac{1}{C}\left(\sum_{i=1}^L \||G_i|^p\|_{L^1(O;\mathbb R^{d \times N})}+ |D g|(O)\right)&\leq \mathcal F(g, G_1,\dots, G_L;O)\\
	 &\leq
	  C\left(\mathcal L^N(O) +\sum_{i=1}^L \||G_i|^p\|_{L^1(O;\mathbb R^{d \times N})}+  |D g|(O)\right),
	\end{align*}
	for every $(g, G_1, \dots, G_L) \in HSD^p_L(\Omega)$ and every $O \in \mathcal O(\Omega)$.
\end{enumerate}

\begin{remark}\label{A1A2}
{\rm Due to hypotheses (H1) and (H4), given any $(u, U_1,\dots, U_L) \in HSD_L^p(\Omega)$ and any open sets $O_1 \subset \subset O_2 \subseteq \Omega$, it follows that 
\begin{align*}
\mathcal F(u, U_1,\dots, U_L;O_2) \leq & \, \mathcal F(u, U_1,\dots, U_L;O_1) \\
&+C\left(\mathcal L^N(O_2 \setminus O_1) +\sum_{i=1}^L \||U_i|^p\|_{L^1(O_2\setminus O_1;\mathbb R^{d \times N})}+  |D u|(O_2 \setminus O_1)\right).
\end{align*}
Indeed, for $\varepsilon > 0$ small enough, let $O_{\eps} : = \left\{x \in O_1 : {\rm dist}(x, \partial O_1) > \eps\right\}$ and notice that $O_2$ is covered by the union of the two open sets $O_1$ and $O_2 \setminus \overline{O_{\eps}}.$ Thus, by (H1) and (H4) we have
\begin{align*}
\mathcal F(u, U_1,\dots, U_L;O_2) \leq & \, \mathcal F(u, U_1,\dots, U_L;O_1) + \mathcal F(u, U_1,\dots, U_L;O_2 \setminus \overline{O_{\eps}}) \\
\leq & \, \mathcal F(u, U_1,\dots, U_L;O_1) \\
&\hspace{2cm}+
C\left(\mathcal L^N(O_2 \setminus  \overline{O_{\eps}}) +\sum_{i=1}^L \||U_i|^p\|_{L^1(O_2\setminus \overline{O_{\eps}};\mathbb R^{d \times N})}
+ |D u|(O_2 \setminus\overline{O_{\eps}})\right).
\end{align*}
To conclude the result it suffices to let $\eps \to 0^+$.
}
\end{remark}

\medskip

Given $(g, G_1, \dots, G_L)\in HSD_L^p(\Omega)$ and 
$O \in \mathcal O_{\infty}(\Omega)$, we introduce the space of test functions 
\begin{align}
	\mathcal C_{HSD^p_L}(g, G_1, \dots, G_L; O):=\left\{(u, U_1, \dots, U_L)\in HSD^p_L (\Omega): u=g \hbox{ in a neighbourhood of } \partial O, \right. \nonumber\\
	\left.\int_O (G_i-U_i) \,\de x=0, i=1,\dots, L  \right\}, 	\label{classM}
\end{align}
and we let $m: HSD^p_L(\Omega)\times \mathcal O_\infty(\Omega)$ be the functional defined by
\begin{align}\label{mdef}
m(g, G_1,\dots, G_L;O):=\inf\Big\{\mathcal F(u, U_1,\dots, U_L; O): (u, U_1,\dots, U_L)\in \mathcal C_{HSD^p_L}(g, G_1, \dots, G_L; O)\Big\}.
\end{align}

Following the ideas of the global method of relaxation introduced in \cite{BFM}, our aim in this section is to prove the theorem below. 

\begin{theorem}\label{GMthmHSD}
	Let $p \geq 1$ and let $\mathcal F: HSD_L^p(\Omega) \times\mathcal 
	O(\Omega)\to [0, +\infty]$ be a functional satisfying (H1)-(H4). 
Then
	\begin{align*}
	\mathcal F(u, U_1,\dots, U_L;O)= \int_O \!f(x,u(x), \nabla u(x), U_1(x),\dots, U_L(x))\,\de x +\int_{O \cap S_u}\!\!\!\!\!\Phi(x, u^+(x), u^-(x),\nu_u(x)) 
	\,\de \mathcal H^{N-1}(x),\end{align*}
where
\begin{align}\label{f}
f(x_0, a, \xi, B_1,\dots, B_L):=	\limsup_{\varepsilon \to 0^+}\frac{m(a+ \xi(\cdot-x_0), B_1,\dots, B_L; Q(x_0,\varepsilon))}{\varepsilon^N},
	\end{align}
\begin{align}\label{Phi}
	\Phi(x_0, \lambda, \theta, \nu):= \limsup_{\varepsilon \to 0^+}\frac{m(v_{\lambda, \theta,\nu}(\cdot-x_0), 0,\dots, 0; Q_{\nu}(x_0,\varepsilon))}{\varepsilon ^{N-1}},
	\end{align}
for all $x_0\in \Omega$, $ a, \theta,\lambda \in \mathbb R^d$, $\xi, B_1,\dots, B_L \in \mathbb R^{d \times N}$, $\nu \in \mathbb S^{N-1}$, and
where $0$ is the zero matrix in $\mathbb R^{d \times N}$ and 
$v_{\lambda,\theta, \nu}$ is defined by
$v_{\lambda,\theta, \nu}(x) := \begin{cases} \lambda, &\hbox{if } x\cdot \nu > 0\\
\theta, &\hbox{ if } x\cdot \nu \leq 0.\end{cases}$
\end{theorem}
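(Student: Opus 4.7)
The plan is to follow the global method of \cite{BFM,BFLM}, identifying $f$ and $\Phi$ as the Radon--Nikodym densities of the Borel measure $O\mapsto\mathcal F(u,U_1,\dots,U_L;O)$, guaranteed by (H1), with respect to $\mathcal L^N\res\Omega$ and $\mathcal H^{N-1}\res S_u$ respectively. Hypothesis (H4) yields the absolute continuity $\mathcal F(u,U_1,\dots,U_L;\cdot)\ll\mathcal L^N+\sum_{i=1}^L|U_i|^p\mathcal L^N+|Du|$, and since $u\in SBV(\Omega;\mathbb R^d)$ the right-hand side splits as a bulk part proportional to $\mathcal L^N$ plus a jump part supported on $S_u$, with no Cantor contribution. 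Consequently, it suffices to compute the two blow-up limits of $\mathcal F$ at $\mathcal L^N$-a.e.\ Lebesgue point of $\Omega$ and at $\mathcal H^{N-1}$-a.e.\ point of $S_u$, and to show that they coincide with $f$ and $\Phi$ as defined by \eqref{f}--\eqref{Phi}.

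First I would collect elementary properties of the auxiliary functional $m$: monotonicity in the open set, the natural scaling under affine change of variables on cubes, and a subadditivity/cut-off inequality modelled on Remark \ref{A1A2}, which lets one glue two admissible competitors on nested cubes at the cost of a boundary layer controlled via (H4). A crucial observation at this stage is that any $(v,V_1,\dots,V_L)\in\mathcal C_{HSD^p_L}(u,U_1,\dots,U_L;O)$ can be extended by $(u,U_1,\dots,U_L)$ outside $O$ to produce a global element of $HSD^p_L(\Omega)$: the boundary trace of $v$ matches that of $u$ by definition, and the mean condition $\int_O(U_i-V_i)\,\de x=0$ makes the extended matrix field consistent with the approximation of the structured deformation in the sense of Theorem \ref{appTHMh}. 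This is what singles out $\mathcal C_{HSD^p_L}$ as the right test class for the global method and allows one to transfer the lower semicontinuity (H2) to the infimum $m$.

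For the bulk density, at $\mathcal L^N$-a.e.\ $x_0\in\Omega$ I would pick $x_0$ to be simultaneously a Lebesgue point of $\nabla u$ and of each $U_i$, a point of approximate differentiability of $u$, and a point where Besicovitch differentiation of $\mathcal F(u,U_1,\dots,U_L;\cdot)$ with respect to $\mathcal L^N$ holds. Setting $a=u(x_0)$, $\xi=\nabla u(x_0)$, $B_i=U_i(x_0)$, the upper bound $\frac{d\mathcal F}{d\mathcal L^N}(x_0)\leq f(x_0,a,\xi,B_1,\dots,B_L)$ follows by testing $\mathcal F$ with a near-optimal competitor for $m(a+\xi(\cdot-x_0),B_1,\dots,B_L;Q(x_0,\eps))$, extended by $(u,U_1,\dots,U_L)$ outside the cube and passed to the limit via (H2). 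The reverse inequality requires, given a near-optimal competitor $(v_\eps,V_1^\eps,\dots,V_L^\eps)$ for $\mathcal F$ on $Q(x_0,\eps)$, to correct it into an admissible test in $\mathcal C_{HSD^p_L}(a+\xi(\cdot-x_0),B_1,\dots,B_L;Q(x_0,\eps))$: the boundary trace is reset to the affine datum by a Fonseca--M\"uller-type double cut-off between $Q(x_0,\delta\eps)$ and $Q(x_0,\eps)$, with $\delta\to 1^-$ selected via Lemma \ref{FM}, while the mean of each $V_i^\eps$ is adjusted by a piecewise-constant correction of cost $o(\eps^N)$ in $L^p$ thanks to the Lebesgue-point property of the $U_i$.

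The surface density is then handled analogously at $\mathcal H^{N-1}$-a.e.\ $x_0\in S_u$: rescale in the oriented cube $Q_\nu(x_0,\eps)$ with $\nu=\nu_u(x_0)$, observing that the target is the step profile $v_{u^+(x_0),u^-(x_0),\nu}(\cdot-x_0)$ together with all-zero matrix fields, in agreement with \eqref{Phi}. I expect the genuine obstacle to lie here: since the $U_i$ have no pointwise value on $S_u$, the mean corrections for the $U_i$'s cannot be controlled by Lebesgue differentiation, but only through the mutual singularity $|U_i|^p\mathcal L^N\perp\mathcal H^{N-1}\res S_u$, which gives $\frac{1}{\eps^{N-1}}\int_{Q_\nu(x_0,\eps)}|U_i|^p\,\de x\to 0$ at $\mathcal H^{N-1}$-a.e.\ $x_0\in S_u$. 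Combined with Lemma \ref{FM}, which allows passing to a slightly smaller cube while preserving the blow-up ratio, this is enough to perform both the boundary cut-off and the mean adjustment at cost $o(\eps^{N-1})$, reducing to a competitor for $\Phi$. Once the two local identifications are established, (H1) and the standard monotone-class argument of \cite{BFM} glue them into the integral representation of the theorem; the case $p=1$ is obtained by the same scheme upon replacing weak $L^p$ compactness by weak-$*$ compactness of measures in (H2).
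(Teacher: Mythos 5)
Your proposal follows essentially the same route as the paper: the BFM/FHP scheme via the auxiliary functional $m$ and the test class $\mathcal C_{HSD^p_L}$, the subadditivity/gluing estimate of Remark~\ref{A1A2}, the cut-off on nested cubes $Q(x_0,\delta\eps)\subset Q(x_0,\eps)$ with $\delta\to 1^-$ chosen via Lemma~\ref{FM}, the piecewise-constant mean correction of the matrix fields (controlled by Lebesgue points of $U_i$ in the bulk and by $\frac{1}{\eps^{N-1}}\int_{Q_\nu(x_0,\eps)}|U_i|^p\,\de x\to 0$ on the jump set), and the final gluing through (H1). The one inexactness is where you invoke (H2) directly to ``pass the near-optimal competitor to the limit'': in the paper (H2) enters only through the intermediate identities $\mathcal F=m^\star$ (Lemma~\ref{Lemma 4.2FHP}) and the equality of the $\mathcal F$- and $m$-blow-up ratios (Theorem~\ref{thm4.3FHP}), after which the comparison of $m(u,U_1,\dots;Q)$ with $m(v_a,U_1(x_0),\dots;Q)$ is obtained from locality, the subadditivity of Remark~\ref{A1A2}, trace and boundary-layer estimates, and Lemma~\ref{FM} rather than from (H2) itself.
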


\begin{remark}\label{traslinv} It follows immediately from the definitions given in \eqref{f} and in \eqref{Phi}, and from Theorem \ref{thm4.3FHP}, that if $\mathcal F$ is translation invariant in the first variable, i.e. if
$$\mathcal F(u+a, U_1, \dots, U_L;O)= \mathcal F(u, U_1,\dots, U_L;O),$$ 
for every $((u,U_1,\dots, U_l),O) \in HSD^p_L(\Omega)\times \mathcal O(\Omega)$ and for every $a\in \mathbb R^d$,
then the function $f$ in \eqref{f} does not depend on $a$ and the function $\Phi$ in \eqref{Phi} does not depend on $\lambda$ and $\theta$ but only on the difference $\lambda- \theta$.
Indeed, in this case we conclude that
$$f(x_0, a, \xi, B_1,\dots, B_L) = f(x_0, 0, \xi, B_1,\dots, B_L) \quad \mbox{and}
\quad
\Phi(x_0, \lambda, \theta, \nu) = \Phi(x_0, \lambda - \theta,0, \nu),$$
for all $x_0\in \Omega$, $ a, \theta,\lambda \in \mathbb R^d$, $\xi, B_1,\dots, B_L \in \mathbb R^{d \times N}$ and $\nu \in \mathbb S^{N-1}$. With an abuse of notation
we write
$$f(x_0,\xi, B_1,\dots, B_L) = f(x_0, 0, \xi, B_1,\dots, B_L)  \quad \mbox{and}
\quad \Phi(x_0, \lambda - \theta, \nu) = \Phi(x_0, \lambda - \theta,0, \nu).$$ 
\end{remark}

The proof of Theorem \ref{GMthmHSD} is based on several auxiliary results and follows the reasoning presented in \cite[Theorem 3.7]{BFM} and \cite[Theorem 4.6]{FHP}. For this reason we don't provide the arguments in full detail but point out only the main differences that arise in our setting. We start by proving the following lemma which is used to obtain Theorem \ref{thm4.3FHP}.

\begin{lemma}\label{estHSDL}
Assume that (H1) and (H4) hold. For any $(u, U_1, \dots, U_L)\in HSD^p_L(\Omega)$ it follows that  
\begin{itemize}
	\item if $p>1$, 
	$$
	\limsup_{\delta \to 0^+} m(u, U_1, \dots, U_L;Q_\nu(x_0,(1-\delta) r))
	\leq m(u, U_1, \dots, U_L;Q_\nu(x_0,r)),
	$$
	where
	$Q_\nu(x_0, r)$ is any cube centred at $x_0$ with side length $r$, two faces orthogonal to $\nu$ and contained in $\Omega$;
	\item if $p=1$, 
	$$
	\limsup_{\delta \to 0^+} m(u, U_1, \dots, U_L;O_\delta)
	\leq m(u, U_1, \dots, U_L; O),
	$$
	where $ O_\delta=\{x \in O: {\rm dist}(x, \partial O) > \delta\}$ and $O \in \mathcal O(\Omega)$.
	\end{itemize}
\end{lemma}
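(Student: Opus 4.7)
The plan is, given a near-optimal competitor for $m(u,U_1,\dots,U_L;D)$ on the larger domain, to produce a competitor for $m(u,U_1,\dots,U_L;D_\delta)$ on the shrunken one whose energy exceeds the original by $o(1)$ as $\delta \to 0^+$. Write $D$ and $D_\delta$ for the pair $(Q_\nu(x_0,r), Q_\nu(x_0,(1-\delta)r))$ in the case $p>1$, and for $(O, O_\delta)$ in the case $p = 1$; both cases are handled in the same way. Fix $\eps > 0$ and pick $(v,V_1,\dots,V_L) \in \mathcal C_{HSD^p_L}(u,U_1,\dots,U_L;D)$ with $\mathcal F(v,V_1,\dots,V_L;D) < m(u,U_1,\dots,U_L;D) + \eps$. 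Since $v = u$ near $\partial D$, there exists $\eta_0 > 0$ such that $v = u$ on $D\setminus D_{\eta_0}$, so for $\delta < \eta_0$ the restriction $\tilde v := v|_{D_\delta}$ automatically matches $u$ near $\partial D_\delta$; the only obstacle is that the integral condition $\int_D(U_i - V_i)\,\de x = 0$ need not pass to $D_\delta$.

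To repair the integral constraint without much energetic cost, I would localise the correction on a fixed annular shell. Choose $\eta \in (0, \eta_0)$, avoiding the (at most) countable set of values for which the Radon measure $|Dv|$ has positive mass on $\partial D_\eta$, and small enough that $\mathcal F(v,V_1,\dots,V_L; D \setminus D_\eta)$, $\mathcal L^N(D\setminus D_\eta)$, $|Du|(D\setminus D_\eta)$ and $\sum_i \|V_i\|_{L^p(D \setminus D_\eta)}^p$ are all smaller than $\eps$: each is a finite Radon measure and the sets $D \setminus D_\eta$ decrease to the empty set as $\eta \to 0^+$. For $\delta < \eta$, define
$$\tilde V_i^\delta := V_i \text{ on } D_\eta, \qquad \tilde V_i^\delta := V_i + a_i^\delta \text{ on } D_\delta \setminus D_\eta,$$
where $a_i^\delta \in \R{d\times N}$ is the unique constant enforcing $\int_{D_\delta}(U_i - \tilde V_i^\delta)\,\de x = 0$. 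Using $\int_D (U_i - V_i)\,\de x = 0$, one finds $a_i^\delta\,\mathcal L^N(D_\delta \setminus D_\eta) = -\int_{D \setminus D_\delta}(U_i - V_i)\,\de x$, whose absolute value is $o(1)$ as $\delta \to 0^+$ (H\"older's inequality when $p>1$, absolute continuity of the Lebesgue integral when $p=1$); hence $a_i^\delta \to 0$ and $\|a_i^\delta \chi_{D_\delta\setminus D_\eta}\|_{L^p}^p \to 0$, so $(\tilde v, \tilde V_1^\delta, \dots, \tilde V_L^\delta)$ lies in $\mathcal C_{HSD^p_L}(u,U_1,\dots,U_L; D_\delta)$.

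Splitting the energy via (H1) across $D_\eta$ and $D_\delta \setminus \overline{D_\eta}$ (the boundary contribution on $\partial D_\eta$ vanishes because of the upper bound in (H4) together with $\mathcal L^N(\partial D_\eta)=0$ and the choice $|Dv|(\partial D_\eta) = 0$), locality (H3) gives $\mathcal F(\tilde v, \tilde V_1^\delta, \dots; D_\eta) = \mathcal F(v,V_1,\dots,V_L; D_\eta) \leq \mathcal F(v,V_1,\dots,V_L; D) < m(u,U_1,\dots,U_L;D) + \eps$, while on the shell $\tilde v = u$, so (H3) and the upper bound in (H4) yield
\begin{align*}
\mathcal F(\tilde v, \tilde V_1^\delta, \dots; D_\delta \setminus \overline{D_\eta}) \leq C\Bigl(\mathcal L^N(D\setminus D_\eta) + \sum_{i=1}^L \|V_i + a_i^\delta\|_{L^p(D\setminus D_\eta)}^p + |Du|(D\setminus D_\eta)\Bigr).
\end{align*}
Taking $\limsup_{\delta\to 0^+}$ removes the $a_i^\delta$-term and, by the choice of $\eta$, leaves a total bounded by $C\eps$, so that $\limsup_{\delta\to 0^+} m(u,U_1,\dots,U_L; D_\delta) \leq m(u,U_1,\dots,U_L; D) + (1+C)\eps$; sending $\eps\to 0$ concludes. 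The main technical point is the shell correction: it must simultaneously restore the integral constraint on $D_\delta$ and remain a vanishing $L^p$-perturbation, which is guaranteed by spreading $a_i^\delta$ over a set of positive volume. The Radon-measure continuity from above of every measure entering (H4) then makes the residual shell cost negligible once $\eta$ is chosen small enough, which is essentially the only subtle ingredient of the argument.
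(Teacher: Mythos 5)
The proof is correct, but your shell construction is genuinely different from the paper's, and arguably cleaner. The paper works with a shrinking shell $Q(1-\delta)\setminus Q(1-\alpha(\delta))$, with $\delta<\alpha(\delta)<2\delta$, that collapses to $\partial Q$ as $\delta\to 0^+$; after placing a constant value of $\overline V_i$ on it, the paper needs H\"older's inequality together with the cube-specific geometric ratio \eqref{bound} to show that the $L^p$-cost of this constant vanishes, which is exactly why the paper's $p>1$ statement is phrased for cubes only. You instead pin the inner face of the shell at a fixed radius $\eta$, so $D_\delta\setminus D_\eta$ has measure bounded away from zero as $\delta\to 0^+$; the correction $a_i^\delta$ then tends to zero for free (numerator goes to zero by absolute continuity, denominator is bounded below), with no H\"older estimate and no geometric ratio, at the price that the shell now carries the original $V_i$, whose $L^p$-mass on $D\setminus D_\eta$ you tame by a one-time choice of small $\eta$. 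This unifies $p>1$ and $p=1$ and works for arbitrary open sets even when $p>1$, a small improvement. Three minor points worth making explicit: (i) $(\tilde v,\tilde V_1^\delta,\dots,\tilde V_L^\delta)$ should be extended by $(u,U_1,\dots,U_L)$ outside $D_\delta$ so that the competitor genuinely belongs to $HSD^p_L(\Omega)$; (ii) for general open $D$ (the $p=1$ case) you should also exclude the $\cL^1$-null set of levels $\eta$ with $\cL^N(\partial D_\eta)>0$, not just the countable set charged by $|Dv|$ --- you invoke $\cL^N(\partial D_\eta)=0$ but do not record that $\eta$ was chosen to ensure it (for cubes it is automatic); (iii) like the paper's own argument, yours tacitly uses locality (H3) when replacing $\mathcal F(\tilde v,\tilde V_1^\delta,\dots;D_\eta)$ by $\mathcal F(v,V_1,\dots,V_L;D_\eta)$, even though the lemma formally lists only (H1) and (H4).
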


\begin{proof}  
Suppose first that $p > 1$. Without loss of generality we can assume that $x_0=0$, $r=1$, $\nu=\bf e_1$ and $Q \subset \Omega$.  For every $\varepsilon >0$ there exists $(v, V_1,\dots V_L)\in \mathcal C_{HSD^p_L}(u, U_1,\dots U_L;Q)$ such that 
\begin{equation}\label{uQ}
\mathcal F(v, V_1,\dots, V_L;Q)\leq m(u, U_1,\dots, U_L;Q)+ \varepsilon.
\end{equation}
Let $0 < \delta < 1$ be small enough so that $u = v$ in a neighbourhood of $Q \setminus Q(1-2\delta)$, and let $\delta < \alpha(\delta) < 2\delta$ be such that 
$\displaystyle \lim_{\delta \to 0^+}\alpha(\delta) = 0$, 
$Q(1-\alpha(\delta)) \subset \subset Q(1-\delta)$ and
\begin{equation}\label{bound}
\dfrac{\mathcal L^N(Q \setminus Q(1-\alpha(\delta)))}{\mathcal L^N(Q(1-\delta) \setminus Q(1-\alpha(\delta)))} \leq C,
\end{equation}
where the constant $C$ depends only on the space dimension $N$ and is, therefore, independent of $\delta$.

For every $i =1,\dots, L$, define
$$
\overline V_i= \begin{cases}
V_i, &\hbox{ in }Q(1-\alpha(\delta))\\
\dfrac{1}{\mathcal L^N(Q(1-\delta) \setminus Q(1-\alpha(\delta)))}\left(\displaystyle \int_{Q(1-\delta)}U_i \, \de x- \int_{Q(1-\alpha(\delta))}V_i \, \de x\right), &\hbox{ in }Q(1-\delta)\setminus Q(1-\alpha(\delta))\\*[5mm]
U_i, &\hbox{ in } \Omega\setminus Q(1-\delta)	
\end{cases}
$$
and
$$
\overline v= \begin{cases} 
v, &\hbox{ in } Q(1-\alpha(\delta)) \\
u, &\hbox{ in } \Omega\setminus Q(1-\alpha(\delta)).
\end{cases}
$$
It is easily verified that $(\overline v, \overline V_1,\dots, \overline V_L)\in \mathcal C_{HSD^p_L}(u, U_1,\dots, U_L;Q(1-\delta))$. Thus, by Remark \ref{A1A2}, by (H1) and by \eqref{uQ}, we have
\begin{align}\label{uQ1-delta}
m(u, U_1,\dots, U_L;Q(1-\delta))&\leq 
\mathcal F(\overline v, \overline V_1, \dots, \overline V_L;Q(1-\delta))\nonumber\\
& \leq \mathcal F(v, V_1, \dots, V_L; Q(1-\alpha(\delta))) 
+ C\Big[\mathcal L^N(Q_{1- \delta} \setminus Q(1-\alpha(\delta))) \nonumber\\
& \hspace{1,3cm} + |D \overline v|(Q(1-\delta) \setminus Q(1-\alpha(\delta)))
 +  \sum_{i=1}^L\||\overline V_i|^p\|_{L^1(Q(1-\delta) \setminus Q(1-\alpha(\delta));\mathbb R^{d \times N})}\Big]\nonumber\\
&\leq \mathcal F(v, V_1, \dots, V_L;Q) + 
C\Big[\mathcal L^N(Q_{1- \delta} \setminus Q(1-\alpha(\delta))) \nonumber\\
& \hspace{1,3cm} + |D \overline v|(Q(1-\delta) \setminus Q(1-\alpha(\delta)))
 +  \sum_{i=1}^L\||\overline V_i|^p\|_{L^1(Q(1-\delta) \setminus Q(1-\alpha(\delta));\mathbb R^{d \times N})}\Big]\nonumber\\
&\leq   m(u, U_1,\dots, U_L;Q)+ \varepsilon + 
C\Big[\mathcal L^N(Q_{1- \delta} \setminus Q(1-\alpha(\delta))) \nonumber\\
& \hspace{1,3cm} + |D \overline v|(Q(1-\delta) \setminus Q(1-\alpha(\delta))) 
 +  \sum_{i=1}^L\||\overline V_i|^p\|_{L^1(Q(1-\delta) \setminus Q(1-\alpha(\delta));\mathbb R^{d \times N})}\Big].
\end{align}
Clearly, $\displaystyle \lim_{\delta \to 0^+}\mathcal L^N(Q_{1- \delta} \setminus Q(1-\alpha(\delta))) = 0$ and, since $u = v$ on $\partial Q(1-\alpha(\delta))$, it also follows that 
$$\displaystyle \lim_{\delta \to 0^+}|D \overline v|(Q(1-\delta) \setminus Q(1-\alpha(\delta))) =0.$$
On the other hand, for every $i = 1, \ldots,L$, we have
\begin{align}\label{pnorm}
&\||\overline V_i|^p\|_{L^1(Q(1-\delta) \setminus Q(1-\alpha(\delta));\mathbb R^{d \times N})}  =\frac{1}{(\mathcal L^N(Q(1-\delta) \setminus Q(1-\alpha(\delta))))^{p-1}} \left|\int_{Q(1-\delta)} U_i \, \de x - \int_{Q(1-\alpha(\delta))} V_i \, \de x\right|^p \nonumber\\
& \hspace{2cm}= \frac{1}{(\mathcal L^N(Q(1-\delta) \setminus Q(1-\alpha(\delta))))^{p-1}} \left|\int_{Q(1-\alpha(\delta))} U_i - V_i \, \de x + \int_{Q(1-\delta) \setminus Q(1-\alpha(\delta))} U_i \, \de x\right|^p \nonumber \\
& \hspace{2cm}\leq \frac{C}{(\mathcal L^N(Q(1-\delta) \setminus Q(1-\alpha(\delta))))^{p-1}} \left(\left|\int_{Q(1-\alpha(\delta))} U_i - V_i \, \de x\right|^p + \left|\int_{Q(1-\delta) \setminus Q(1-\alpha(\delta))} U_i \, \de x\right|^p\right).
\end{align}
Recalling that $\displaystyle \int_Q U_i - V_i \, \de x = 0$, $\forall i = 1, \ldots,L$,
the first term in \eqref{pnorm} can be estimated by using H\"older's inequality yielding
\begin{align*}
&\frac{C}{(\mathcal L^N(Q(1-\delta) \setminus Q(1-\alpha(\delta))))^{p-1}} \left|\int_{Q(1-\alpha(\delta))} U_i - V_i \, \de x\right|^p \nonumber \\
&\hspace{2cm}= \frac{C}{(\mathcal L^N(Q(1-\delta) \setminus Q(1-\alpha(\delta))))^{p-1}} \left|\int_{Q\setminus Q(1-\alpha(\delta))} U_i - V_i \, \de x\right|^p \nonumber \\
&\hspace{2cm}\leq \frac{C}{(\mathcal L^N(Q(1-\delta) \setminus Q(1-\alpha(\delta))))^{p-1}}
\|U_i - V_i\|^p_{L^p(Q \setminus Q(1-\alpha(\delta));\mathbb R^{d \times N})}
(\mathcal L^N(Q \setminus Q(1-\alpha(\delta))))^{p-1}.	
\end{align*}	
By \eqref{bound} and the fact that $\displaystyle \lim_{\delta \to 0^+}\mathcal L^N(Q\setminus Q(1-\alpha(\delta))) = 0$ we conclude that
$$\lim_{\delta \to 0^+}	\frac{C}{(\mathcal L^N(Q(1-\delta) \setminus Q(1-\alpha(\delta))))^{p-1}} 
\left|\int_{Q(1-\alpha(\delta))} U_i - V_i \, \de x\right|^p = 0.$$
Regarding the second term in \eqref{pnorm}, a similar argument using H\"older's inequality leads to
\begin{align*}
&\lim_{\delta \to 0^+}
\frac{C}{(\mathcal L^N(Q(1-\delta) \setminus Q(1-\alpha(\delta))))^{p-1}}
\left|\int_{Q(1-\delta) \setminus Q(1-\alpha(\delta))} U_i \, \de x\right|^p \\
\smallskip
&\hspace{7cm}\leq \lim_{\delta \to 0^+}C\|U_i\|^p_{L^p(Q(1-\delta) \setminus Q(1-\alpha(\delta));\mathbb R^{d \times N})} = 0.
\end{align*}
Therefore, from \eqref{uQ1-delta}, we obtain
$$\limsup_{\delta\to 0^+}m(u, U_1,\dots, U_L;Q(1-\delta))\leq 
m(u, U_1,\dots, U_L;Q)+ \varepsilon$$
and it suffices to let $\varepsilon \to 0^+$ to complete the proof in the case 
$p > 1$.

When $p = 1$ the proof is similar and we omit the details. In this case the estimate of the last term in \eqref{uQ1-delta} is simpler and does not require the use of H\"older's inequality. Also, in this case, more general sets other than cubes may be considered as there is no need to use inequality \eqref{bound} (see also \cite{FHP}).
\end{proof}

\medskip

Following \cite{BFM, BFLM}, for a fixed $(u,U_1,\dots,U_L) \in  HSD_L^p(\Omega)$,
we set $\mu := \mathcal L^N\lfloor \Omega + |D^s u|$ and we define
$$\mathcal O^\star(\Omega):= \{Q_\nu (x, \varepsilon) : x \in \Omega, \nu \in \mathbb S^{N-1}, \varepsilon > 0\},$$
and, for $O \in \mathcal O(\Omega)$ and $\delta > 0$, we let
\begin{align*}
	m^\delta(u,U_1,\dots, U_L;O) := \inf \Big\{\sum_{i=1}^\infty m(u, U_1,\dots, U_L; Q_i): Q_i \in \mathcal O^\star(\Omega), Q_i \subseteq O, Q_i \cap Q_j= \emptyset \; {\rm if } \; i \neq j,  \\
	 {\rm diam}(Q_i) < \delta, \mu\Big(O \setminus \bigcup_{i=1}^\infty Q_i\Big)=0\Big\}.
\end{align*}
Since $\delta \mapsto m^\delta(u,U_1,\dots, U_L;O)$ is an increasing function, we now define 
$$
m^\star (u, U_1, \dots, U_L;O):= \sup_{\delta >0} m^\delta (u, U_1, \dots, U_L;O)= \lim_{\delta \to 0^+} m^\delta(u, U_1,\dots, U_L;O).
$$

Adapting the reasoning given in \cite[Lemma 4.2 and Theorem 4.3]{FHP}, with an even easier argument due to our hypothesis (H2) and to the fact that our fields $u$ have bounded variation, we obtain the two results below.

\begin{lemma}
	\label{Lemma 4.2FHP} Let $p \geq 1$ and
	assume that (H1)-(H4) hold. Then, 
	for all $(u, U_1,\dots, U_L)\in HSD_L^p(\Omega)$ and all 
	$O \in \mathcal O(\Omega)$,
	we have
	$$
	\mathcal F(u, U_1, \dots, U_L;O) = m^\star(u, U_1,\dots, U_L; O).
	$$
\end{lemma}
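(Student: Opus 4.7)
The plan is to establish $m^\star = \mathcal F$ on $HSD^p_L(\Omega) \times \mathcal O(\Omega)$ by proving the two inequalities separately, following the scheme of \cite[Lemma 3.1]{BFM} and \cite[Lemma 4.2]{FHP}. \textbf{The easy inequality $m^\star \leq \mathcal F$.} For any admissible collection $\{Q_i\}$ entering the definition of $m^\delta(u, U_1,\dots, U_L; O)$, the tuple $(u, U_1,\dots, U_L)$ is itself a competitor in $\mathcal C_{HSD^p_L}(u, U_1, \dots, U_L; Q_i)$, so $m(u, U_1, \dots, U_L; Q_i) \leq \mathcal F(u, U_1, \dots, U_L; Q_i)$. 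Summing and exploiting the measure property (H1) on the pairwise disjoint subsets $Q_i \subset O$ yields $\sum_i m(u; Q_i) \leq \mathcal F(u; O)$; taking the infimum over admissible families and then the supremum in $\delta$ gives $m^\star \leq \mathcal F$.

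\textbf{The hard inequality $\mathcal F \leq m^\star$.} For fixed $\delta, \eta > 0$, I would select an admissible collection $\{Q_i\}_{i \in \mathbb N}$ with $\sum_i m(u; Q_i) \leq m^\delta(u; O) + \eta$, and for each $i$ a near-optimal competitor $(v_i, V_{i,1},\dots, V_{i,L}) \in \mathcal C_{HSD^p_L}(u, U_1,\dots, U_L; Q_i)$ satisfying $\mathcal F(v_i, V_{i,1},\dots, V_{i,L}; Q_i) \leq m(u; Q_i) + \eta \mathcal L^N(Q_i)$. I would then glue these to define $(v^{\delta,\eta}, V_1^{\delta,\eta},\dots, V_L^{\delta,\eta})$, equal to $(v_i, V_{i,1},\dots, V_{i,L})$ on $Q_i$ and to $(u, U_1,\dots, U_L)$ on the $\mu$-null residue $O \setminus \bigcup_i Q_i$. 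The boundary matching $v_i = u$ near $\partial Q_i$ ensures no spurious interfaces appear on $\partial Q_i$, so the glued tuple lies in $HSD^p_L(\Omega)$.

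\textbf{Convergence of the glued competitors.} I would verify that $v^{\delta,\eta} \to u$ in $L^1(O;\mathbb R^d)$ and $V_j^{\delta,\eta} \rightharpoonup U_j$ weakly in $L^p(O;\mathbb R^{d\times N})$ for every $j$ (resp.\ weakly-$*$ as measures when $p=1$). A Poincaré inequality on each $Q_i$, applied to the zero-trace extension of $v_i - u$, together with the coercivity in (H4), gives
\[
\|v_i - u\|_{L^1(Q_i)} \leq C\,\delta \bigl(|Dv_i|(Q_i) + |Du|(Q_i)\bigr) \leq C\delta\bigl(\mathcal F(v_i; Q_i) + |Du|(Q_i)\bigr);
\]
summing over $i$ and using the uniform energy bound $\sum_i \mathcal F(v_i; Q_i) \leq m^\delta(u; O) + \eta(1 + \mathcal L^N(O))$ produces $\|v^{\delta,\eta} - u\|_{L^1(O)} = O(\delta)$. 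For the weak convergence of $V_j^{\delta,\eta} - U_j$, I test against $\varphi \in C(\overline\Omega; \mathbb R^{d \times N})$ and decompose $\varphi = \varphi(x_i) + (\varphi - \varphi(x_i))$ on each $Q_i$; the zero-mean constraint $\int_{Q_i}(V_{i,j} - U_j)\,\de x = 0$ built into $\mathcal C_{HSD^p_L}$ kills the leading term, leaving an error bounded by $\omega_\varphi(\delta)\|V_j^{\delta,\eta} - U_j\|_{L^1(O)}$, where the latter norm is uniformly controlled by (H4).

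\textbf{Conclusion and main obstacle.} Invoking (H2) along the convergence just established and (H1) to split the energy over $\bigcup_i Q_i$ and the residue $O \setminus \bigcup_i Q_i$ — which contributes zero since $\mu(O\setminus \bigcup_i Q_i) = 0$ annihilates $\mathcal L^N$, $|Du|$ and the $L^p$ integrals of $U_j$ there, so by (H4) $\mathcal F(u;\cdot)$ vanishes on the residue — gives
\[
\mathcal F(u;O) \leq \liminf_{\eta,\delta\to 0^+} \sum_i \mathcal F(v_i; Q_i) \leq \liminf_{\eta,\delta\to 0^+} \bigl(m^\delta(u;O) + \eta(1+\mathcal L^N(O))\bigr) = m^\star(u;O).
\]
The main obstacle is the weak convergence of the flat fields $V_j^{\delta,\eta}$: without the zero-mean constraint in $\mathcal C_{HSD^p_L}$, the locally chosen $V_{i,j}$ would have no reason to assemble into a weak limit equal to $U_j$, and this constraint — absent in the classical $SBV_p$ global method of \cite{BFLM} — is precisely what tailors the argument to the hierarchical structured deformation setting.
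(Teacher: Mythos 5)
Your proof is correct and follows the same route the paper intends: the authors state only that the lemma follows by ``adapting the reasoning given in \cite[Lemma 4.2 and Theorem 4.3]{FHP}, with an even easier argument,'' and your construction — Vitali covering, near-optimal competitors on each $Q_i$, gluing using the boundary-matching and zero-mean constraints built into $\mathcal C_{HSD^p_L}$, Poincar\'e for the $L^1$ convergence, and the mean-value cancellation trick for the weak convergence of the flat fields, finishing with (H2), (H1), (H3), (H4) — is exactly that adaptation. One small slip: in the conclusion you write that ``by (H4) $\mathcal F(u;\cdot)$ vanishes on the residue,'' whereas what is actually needed is that $\mathcal F(v^{\delta,\eta};\cdot)$ vanishes there; this holds because $v^{\delta,\eta}=u$ and $V_j^{\delta,\eta}=U_j$ on (a neighbourhood of) the residue, $Dv^{\delta,\eta}$ agrees with $Du$ there since each $v_i-u$ has compact support in $Q_i$, and then (H3) (extended to Borel sets via regularity of the Radon measure) plus (H4) give the vanishing. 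This is a notational rather than substantive gap, and the argument as a whole is sound.
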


\medskip

\begin{theorem}\label{thm4.3FHP}
	Let $p\geq 1$ and assume that hypotheses (H1), (H2) and (H4) hold. Then, for every $\nu \in \mathbb S^{N-1}$ and for every $(u,U_1, \dots, U_L) \in HSD_L^p(\Omega)$, 
	we have 
	$$
	\lim_{\varepsilon \to 0^+}\frac{{\mathcal F}(u, U_1,\dots, U_L; Q_\nu(x_0, \varepsilon))}{\mu( Q_\nu(x_0,\varepsilon))}= \lim_{\varepsilon \to 0^+} \frac{m (u, U_1, \dots, U_L; Q_\nu(x_0, \varepsilon))}{\mu (Q_\nu(x_0,\varepsilon))}
	$$
	for $\mu$-a.e. $x_0 \in \Omega$, where $\mu:=\mathcal L^N\lfloor \Omega + |D^s u|.$
	\end{theorem}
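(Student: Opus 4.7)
The plan is to follow the scheme of \cite[Lemma 4.2 and Theorem 4.3]{FHP} (itself modeled on \cite[Theorem 3.7]{BFM}), adapted to the $HSD^p_L$ setting. By hypothesis (H1), $\mathcal F(u, U_1,\dots, U_L; \cdot)$ extends to a Radon measure on $\Omega$, and hypothesis (H4) ensures absolute continuity with respect to $\mu = \mathcal L^N\lfloor\Omega + |D^s u|$. Consequently, by the Besicovitch differentiation theorem applied on shrinking cubes of fixed orientation $\nu$,
\begin{equation*}
G(x_0) := \lim_{\varepsilon \to 0^+}\frac{\mathcal F(u, U_1,\dots,U_L; Q_\nu(x_0,\varepsilon))}{\mu(Q_\nu(x_0,\varepsilon))}
\end{equation*}
exists and coincides with $\frac{d\mathcal F}{d\mu}(x_0)$ for $\mu$-a.e.\ $x_0 \in \Omega$ and every $\nu \in \mathbb S^{N-1}$. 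The first inequality is immediate: since $(u, U_1,\dots, U_L)$ itself belongs to $\mathcal C_{HSD^p_L}(u, U_1,\dots, U_L; Q_\nu(x_0,\varepsilon))$, one has $m(u, U_1, \dots, U_L; Q_\nu(x_0,\varepsilon)) \leq \mathcal F(u, U_1,\dots, U_L; Q_\nu(x_0,\varepsilon))$ for every $\varepsilon > 0$, whence $\limsup_{\varepsilon \to 0^+} m/\mu \leq G$ $\mu$-almost everywhere.

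For the reverse inequality, the crucial ingredient is Lemma \ref{Lemma 4.2FHP}, which identifies $\mathcal F = m^\star = \sup_{\delta > 0} m^\delta$. Let $F(x_0) := \liminf_{\varepsilon \to 0^+} m(u, U_1, \dots, U_L; Q_\nu(x_0, \varepsilon))/\mu(Q_\nu(x_0, \varepsilon))$. By the Lebesgue--Radon--Nikodym theorem applied to $\mathcal F \ll \mu$, it suffices to prove $\mathcal F(A) \leq \int_A F \, \de\mu$ for every open set $A \subset\subset \Omega$. Fix $\eta, \delta > 0$: for $\mu$-a.e.\ $x \in A$, select a sequence $\varepsilon_n(x) \to 0^+$ with $\varepsilon_n(x) < \delta/\sqrt N$, $Q_\nu(x, \varepsilon_n(x)) \subset A$, and
\begin{equation*}
m(u, U_1, \dots, U_L; Q_\nu(x, \varepsilon_n(x))) \leq (F(x) + \eta)\,\mu(Q_\nu(x, \varepsilon_n(x))).
\end{equation*}
This family is a Vitali fine cover of $A$ relative to $\mu$; by the Vitali covering theorem, extract a countable pairwise disjoint subfamily $\{Q_i\}$ with $\mu(A \setminus \bigcup_i Q_i) = 0$ and $\operatorname{diam}(Q_i) < \delta$. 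Since $\{Q_i\}$ is admissible in the definition of $m^\delta(A)$,
\begin{equation*}
m^\delta(A) \leq \sum_i m(Q_i) \leq \sum_i (F(x_i) + \eta)\,\mu(Q_i).
\end{equation*}
Approximating $F$ from above by a lower semicontinuous $\tilde F$ via the Vitali--Carath\'eodory theorem (so that $\int_A (\tilde F - F)\,\de\mu < \eta$) and refining the Vitali extraction to ensure $\tilde F(y) > \tilde F(x_i) - \eta$ for every $y \in Q_i$, one obtains $\sum_i F(x_i)\mu(Q_i) \leq \int_A F\,\de\mu + \eta(1 + \mu(A))$. The resulting bound $m^\delta(A) \leq \int_A F\,\de\mu + C\eta(1 + \mu(A))$ is uniform in $\delta$; passing to the supremum and invoking $\mathcal F = m^\star$ yields $\mathcal F(A) \leq \int_A F\,\de\mu + C\eta(1 + \mu(A))$, and letting $\eta \to 0^+$ concludes the argument.

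The principal obstacle is that $m(u, U_1, \dots, U_L; \cdot)$ is not itself a Radon measure: it is only a sub-additive set function arising from an infimum subject to the zero-mean constraints $\int_O (G_i - U_i)\,\de x = 0$, and standard Radon--Nikodym differentiation cannot be applied to it directly. The detour through $m^\star$ afforded by Lemma \ref{Lemma 4.2FHP}, combined with the Vitali fine-cover construction above, is precisely what permits the density of $\mathcal F$ with respect to $\mu$ to be recovered as the pointwise $\mu$-a.e.\ limit of the minimization quantity $m$. Note that the BV regularity of $u$ (and hence the fact that $\mu$ is a genuine Radon measure on $\Omega$) and hypothesis (H2), invoked in Lemma \ref{Lemma 4.2FHP}, together streamline the argument compared with the $SBV^2$ setting of \cite{FHP}.
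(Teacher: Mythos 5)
Your argument reproduces faithfully the BFM/FHP scheme that the paper itself points to (``Adapting the reasoning given in \cite[Lemma 4.2 and Theorem 4.3]{FHP}...''): the easy inequality from $m\leq\mathcal F$ on one side, and on the other the passage through $m^\star=\mathcal F$ from Lemma~\ref{Lemma 4.2FHP} combined with a Vitali fine-cover extraction by cubes of fixed orientation and a Vitali--Carath\'eodory upper approximation of $\liminf m/\mu$ to obtain $\mathcal F(A)\leq\int_A F\,\de\mu$, whence $d\mathcal F/d\mu\leq F$ $\mu$-a.e.\ by Besicovitch differentiation. The only blemish is cosmetic and inherited from the source: you invoke Lemma~\ref{Lemma 4.2FHP}, which is stated under (H1)--(H4), while the theorem advertises only (H1), (H2), (H4); this mirrors the same imprecision already present in \cite{FHP} and does not affect the substance of the argument.
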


\medskip

We now present the proof of our main result of this section.

\begin{proof}[Proof of Theorem \ref{GMthmHSD}]
	We begin by proving that, for $\mathcal L^N$- a.e. $x_0 \in \Omega$,
	\begin{align}\label{fproof}
	\frac{d \mathcal F(u, U_1, \dots, U_L;\cdot)}{d \mathcal L^N} (x_0)=	
	f(x_0, u(x_0), \nabla u(x_0), U_1(x_0),\dots, U_L(x_0)).
	\end{align}

Let $x_0$ be a fixed point in $\Omega$ satisfying the following properties
\begin{align}
&\lim_{\varepsilon \to 0^+}\frac{1}{\varepsilon}\ave_{Q(x_0,\varepsilon)}|u(x) - u(x_0) - \nabla u(x_0)(x-x_0)| \, \de x = 0;\label{aproxdif}\\
& \lim_{\varepsilon \to 0^+}\frac{1}{\varepsilon^N}|Du|(Q(x_0,\varepsilon)) = |\nabla u(x_0)|, \quad \lim_{\varepsilon \to 0^+}\frac{1}{\varepsilon^N}|D^su|(Q(x_0,\varepsilon)) = 0;\label{DuDsu}\\
& \lim_{\varepsilon \to 0^+}\ave_{Q(x_0,\varepsilon)}|U_i(x) - U_i(x_0)| \, \de x = 0, 
\forall i = 1, \ldots,L;\label{LebptUj}\\
& \frac{d \mathcal F(u, U_1, \dots, U_L;\cdot)}{d \mathcal L^N} (x_0) =
\lim_{\varepsilon \to 0^+}\frac{\mathcal F(u, U_1, \dots, U_L;Q(x_0,\varepsilon))}{\varepsilon^N} =
\lim_{\varepsilon \to 0^+}\frac{m(u, U_1, \dots,U_L;Q(x_0,\varepsilon))}{\varepsilon^N};\label{Fmu}\\
& \frac{d \mathcal F(v_a, U_1(x_0), \dots, U_L(x_0);\cdot)}{d \mathcal L^N} (x_0) =
\lim_{\varepsilon \to 0^+}\frac{m(v_a, U_1(x_0), \dots,U_L(x_0);Q(x_0,\varepsilon))}{\varepsilon^N};\label{Fmva}
\end{align}
where we are denoting by $v_a$ the function defined in $\Omega$ by 
$v_a(x):= u(x_0)+ \nabla u(x_0)(x-x_0)$. It is well known that the above properties hold for $\mathcal L^N$- a.e. point $x_0$ in $\Omega$, taking also in consideration Theorem \ref{thm4.3FHP} in \eqref{Fmu} and \eqref{Fmva}.

Having fixed $x_0$ as above, let $\delta \in (0,1)$ and let $\varepsilon >0$ be small enough so that $Q(x_0,\varepsilon)\subset \Omega$. Given the definition of the density $f$ in \eqref{f}, due to \eqref{Fmu} and \eqref{Fmva},
we want to show that
\begin{align}\label{toestimate}
	\lim_{\varepsilon \to 0^+}
	\frac{m(v_a,U_1(x_0),\dots U_L(x_0);Q (x_0, \varepsilon))}
	{\mathcal L^N (Q (x_0, \varepsilon))}- \lim_{\varepsilon \to 0^+}\frac{m(u,U_1,\dots U_L; Q (x_0, \varepsilon))}{\mathcal L^N (Q (x_0, \varepsilon))} = 0.
\end{align}

Let $(\widetilde u, \widetilde U_1, \dots, \widetilde U_L) \in \mathcal C_{HSD^p_L}(v_a, U_1(x_0), \dots, U_L(x_0); Q(x_0, \delta \varepsilon))$ be such that
\begin{equation}\label{infmva}
\varepsilon^{N+1} + m(v_a, U_1(x_0),\dots, U_L(x_0); Q(x_0, \delta \varepsilon)) \geq \mathcal F(\widetilde u, \widetilde U_1, \dots, \widetilde U_L; Q(x_0, \delta \varepsilon)).
\end{equation}
Then, as $\widetilde u = v_a$ on $\partial Q(x_0,\varepsilon)$, we have
\begin{align}\label{trvaest}
|{\rm tr}\, u-{\rm tr}\,\widetilde u|(\partial Q(x_0,\delta \varepsilon)):= \int_{\partial Q(x_0,\delta\varepsilon)} |\widetilde u(x)- u(x)| \,\de \mathcal H^{N-1}(x)=\int_{\partial Q(x_0,\delta\varepsilon)} |v_a(x)- u(x)|\, \de \mathcal H^{N-1}(x).
\end{align}

Let $\delta' \in (\delta, 1)$ be such that $Q(x_0,\delta \varepsilon) \subset \subset Q(x_0,\delta' \varepsilon)$ and define
$$
\widetilde v_\varepsilon:=\left\{
\begin{array}{ll}
\widetilde u, &\hbox{ in }Q(x_0,\delta \varepsilon),\\
u, &\hbox{ in }\Omega\setminus Q(x_0,\delta \varepsilon)
\end{array}
\right.
$$
and, for every $i\in \{1,\dots ,L\}$, let
$$\widetilde V^i_\varepsilon(x):= \left\{
\begin{array}{ll}
\widetilde U_i(x), & \hbox{ in }Q(x_0,\delta \varepsilon),\\
\displaystyle \frac{1}{\mathcal L^N(Q(x_0,\varepsilon)\setminus Q(x_0,\delta \varepsilon))}\left[\int_{Q(x_0,\varepsilon)} U_i(x)\, \de x- \int_{Q(x_0,\delta \varepsilon)} U_i(x_0)\,\de x\right], &\hbox{ in }\Omega\setminus Q(x_0,\delta \varepsilon).
\end{array}\right.
$$
Recall that  $\displaystyle \int_{Q(x_0,\delta \varepsilon)} \widetilde U_i(x) \,\de x = \int_{Q(x_0,\delta \varepsilon)} U_i(x_0) \,\de x = U_i(x_0)(\delta \varepsilon)^N$, for every $i\in \{1,\dots, L\}$, so it is easy to see that
$(\widetilde v_\varepsilon, \widetilde V^1_{\varepsilon},\dots, \widetilde V^L_{\varepsilon})\in \mathcal C_{HSD^p_L}(u, U_1, \dots, U_L; Q(x_0,\varepsilon))$.
Hence, by Remark \ref{A1A2}, (H4) and \eqref{infmva}, we have
\begin{align}\label{mest}
	m(u,U_1,\dots, U_L;Q(x_0,\varepsilon))&\leq \mathcal F(\widetilde v_\varepsilon,\widetilde V^1_\varepsilon,\dots, \widetilde V^L_\varepsilon;Q(x_0,\varepsilon))\nonumber\\
	&\leq \mathcal F(\widetilde v_\varepsilon, \widetilde V^1_\varepsilon,\dots, \widetilde V^L_\varepsilon;Q(x_0,\delta'\varepsilon))+ \mathcal F(\widetilde v_\varepsilon, \widetilde V^1_\varepsilon,\dots, \widetilde V^L_\varepsilon; Q(x_0,\varepsilon)\setminus \overline{ Q (x_0,\delta\varepsilon)})\nonumber \\
	&\leq \mathcal F(\widetilde v_\varepsilon, \widetilde V^1_\varepsilon,\dots, \widetilde V^L_\varepsilon;Q(x_0,\delta\varepsilon))+ \mathcal F(\widetilde v_\varepsilon, \widetilde V^1_\varepsilon,\dots, \widetilde V^L_\varepsilon; Q(x_0,\varepsilon)\setminus \overline{ Q (x_0,\delta\varepsilon)})\nonumber \\
	& \hspace{1cm} + C\Big(\mathcal L^N(Q(x_0,\delta'\varepsilon) \setminus Q(x_0,\delta\varepsilon)) + |D \widetilde v_\varepsilon|(Q(x_0,\delta'\varepsilon)\setminus Q(x_0,\delta \varepsilon))\nonumber \\
	& \hspace{2cm} + \sum_{i=1}^L \int_{Q(x_0,\delta'\varepsilon)\setminus Q(x_0,\delta \varepsilon)} |\widetilde V^i_\varepsilon  |^p \,\de x\Big)\nonumber \\
	&\leq \mathcal F(\widetilde u, \widetilde U_1,\dots, \widetilde U_L;Q(x_0,\delta\varepsilon)) + C\Big(\mathcal L^N(Q(x_0,\varepsilon) \setminus Q(x_0,\delta\varepsilon))\nonumber \\ 
	& \hspace{2cm} + \sum_{i=1}^L \int_{Q(x_0,\varepsilon)\setminus Q(x_0,\delta \varepsilon)} |\widetilde V^i_\varepsilon  |^p \,\de x 
	 + |D \widetilde v_\varepsilon|(Q(x_0,\varepsilon)\setminus Q(x_0,\delta \varepsilon))\Big)\nonumber \\
	&\leq \varepsilon^{N+1}+ m(v_a, U_1(x_0),\dots, U_L(x_0); Q(x_0,\delta\varepsilon))\nonumber \\
	& \hspace{1cm}+ C\Big( \varepsilon^N (1-\delta^N) + |D u|(Q(x_0,\varepsilon)\setminus \overline{Q(x_0,\delta \varepsilon)})+ |{\rm tr}\,\widetilde u-{\rm \tr}\, u|(\partial Q(x_0,\delta\varepsilon))\nonumber \\
	&\hspace{2cm}+\sum_{i=1}^L \int_{Q(x_0,\varepsilon)\setminus Q(x_0,\delta \varepsilon)} |\widetilde V^i_\varepsilon  |^p \,\de x\Big). 
\end{align}

We observe that for every $i\in \{1,\dots, L\}$ we have
\begin{align}\label{mest2}
& \int_{Q(x_0,\varepsilon)\setminus Q(x_0,\delta \varepsilon)} |\widetilde V^i_\varepsilon  |^p \,\de x \leq 
\frac{1}{\varepsilon ^{N(p-1)}(1-\delta^N)^{p-1}}\left|\int_{Q(x_0,\varepsilon)} U_i(x) \,\de x - \int_{Q(x_0,\delta \varepsilon)} U_i(x_0) \,\de x\right|^p  \nonumber\\
&\hspace{0,3cm}\leq\frac{C}{\varepsilon ^{N(p-1)}(1-\delta^N)^{p-1}}\left( \left|\int_{Q(x_0,\varepsilon)\setminus Q(x_0,\delta \varepsilon)} U_i(x) \, \de x\right|^p+ \left|\int_{Q(x_0,\delta\varepsilon)}(U_i(x)-U_i(x_0)) \,\de x\right|^p\right) \nonumber \\
&\hspace{0,3cm}\leq\frac{C \varepsilon^{Np}}{\varepsilon ^{N(p-1)}(1-\delta^N)^{p-1}}\left( \left|\ave_{Q(x_0,\varepsilon)}\hspace{-0,1cm} U_i(x) \,\de x- \delta^N \ave_{Q(x_0,\delta\varepsilon)}\hspace{-0,1cm}U_i(x)\,\de x \right|^p+ \left|\delta^N\ave_{Q(x_0,\delta\varepsilon)}\hspace{-0,1cm}U_i(x)-U_i(x_0)\,\de x\right|^p\right). 
\end{align}
Thus, to obtain \eqref{toestimate}, taking into account \eqref{mest}, \eqref{mest2} and Lemma \ref{FM}, we have
\begin{align}
& \lim_{\varepsilon \to 0^+}\frac{m(u,U_1,\dots, U_L; Q (x_0, \varepsilon))}{\mathcal L^N (Q (x_0, \varepsilon))} - \lim_{\varepsilon \to 0^+}
	\frac{m(v_a,U_1(x_0),\dots, U_L(x_0);Q (x_0, \varepsilon))}
	{\mathcal L^N (Q (x_0, \varepsilon))}\nonumber\\
&\leq \lim_{\varepsilon \to 0^+}\frac{m(u,U_1,\dots, U_L; Q (x_0, \varepsilon))}{\varepsilon^N} - \limsup_{\delta\to 1^-}\lim_{\varepsilon \to 0^+}
	\frac{m(v_a,U_1(x_0),\dots, U_L(x_0);Q (x_0, \delta\varepsilon))}
	{\varepsilon^N}\nonumber\\
	&\hspace{1cm}\leq\limsup_{\delta \to 1^-}\limsup_{\varepsilon \to 0^+} \left(\varepsilon + C(1-\delta^N)+ \frac{|D u|(Q(x_0,\varepsilon)\setminus \overline{Q(x_0,\delta \varepsilon)}+ |{\rm tr}\,\widetilde u-{\rm \tr}\, u|(\partial Q(x_0,\delta\varepsilon))}{\varepsilon^N}+ \right. \nonumber\\
	&\left.\hspace{4cm}\frac{C}{(1-\delta^N)^{p-1}} \sum_{i=1}^L|U_i(x_0)-\delta^N U_i(x_0)| ^p\right)\label{mest3}
\end{align}
where in the last line we have used the fact that $x_0$ is a Lebesgue point for $U_i$, see \eqref{LebptUj}.

Using \eqref{DuDsu} and \cite[(5.79)]{AFP} yields
\begin{align}\label{uapproxest}
\limsup_{\delta \to 1^-}\limsup_{\varepsilon \to 0^+}\frac{|D u|(Q(x_0,\varepsilon)\setminus \overline{Q(x_0,\delta \varepsilon)})}{\varepsilon^N}\leq \lim_{\delta \to 1^-}|\nabla u(x_0)|(1-\delta^N)=0.
\end{align}
On the other hand, by \eqref{trvaest} and a change of variables, we can apply \cite[Lemma 2.3]{BFM} to conclude that 
\begin{align}
\limsup_{\varepsilon \to 0^+} \frac{|{\rm tr}\,\widetilde u-{\rm \tr}\, u|(\partial Q(x_0,\delta\varepsilon))}{\varepsilon^N}&=\limsup_{\varepsilon \to 0^+} \delta^N\frac{|{\rm tr}\,v_a-{\rm \tr}\, u|(\partial Q(x_0,\delta\varepsilon))}{\delta^N\varepsilon^N}\nonumber\\
&=\limsup_{\varepsilon \to 0^+}\delta^N\int_{\partial Q} | {\rm tr ( u_{\varepsilon \delta}}- \nabla u(x_0)y)|\, \de\mathcal H^{N-1}(y) =0,\label{esttr}
\end{align}
since, denoting by $\displaystyle u_{\varepsilon \delta}(y):=\frac{u(x_0+\delta\varepsilon y)-u(x_0)}{\delta\varepsilon}$, it follows from \eqref{aproxdif} and \eqref{DuDsu} that $u_{\varepsilon \delta} \to \nabla u(x_0)y$ in $L^1(Q;\mathbb R^d)$ and 
$|Du_{\varepsilon\delta}|(Q)\to |\nabla u(x_0)|$, as $\varepsilon \to 0^+$.

Taking into account \eqref{mest3}, \eqref{uapproxest} and \eqref{esttr} we conclude that
$$\lim_{\varepsilon \to 0^+}\frac{m(u,U_1,\dots, U_L; Q (x_0, \varepsilon))}{\varepsilon^N} \leq \lim_{\varepsilon \to 0^+}
	\frac{m(v_a,U_1(x_0),\dots, U_L(x_0);Q (x_0, \varepsilon))}
	{\varepsilon^N}.$$
Interchanging the roles of $(u,U_1,\dots, U_L)$ and $(v_a,U_1(x_0),\dots, U_L(x_0))$, the reverse inequality is proved in a similar fashion. 
This completes the proof of \eqref{fproof}.

Next we want to prove that, for $\mathcal H^{N-1}$- a.e $x_0 \in S_u$,
\begin{align*}
\frac{d \mathcal F(u, U_1,\dots, U_L; \cdot)}{d \mathcal H^{N-1}\lfloor{S_u}}(x_0) = \Phi(x_0, u^+(x_0), u^-(x_0), \nu_u(x_0)).
\end{align*}

For simplicity of notation we denote by $\nu$ the unit vector $\nu_u$ and by $v_j$ the function defined in $\Omega$ by
$$v_j(x) = v_{u^+(x_0),u^-(x_0),\nu(x_0)}(x-x_0):=\left\{
\begin{array}{ll}
u^+(x_0) &\hbox{ if } (x-x_0)\cdot \nu(x_0) > 0,\\
u^-(x_0) &\hbox{ if }(x-x_0)\cdot \nu(x_0) \leq 0. 
\end{array}\right.
$$

It is well known that, for $\mathcal H^{N-1}$ a.e $x_0 \in S_u$, the following hold
\begin{align}
& \lim_{\varepsilon \to 0^+}\ave_{Q_{\nu}(x_0,\varepsilon)}|u(x) - v_j(x)| \, \de x =0;\label{jumppt}\\
& \lim_{\varepsilon \to 0^+}\frac{1}{\varepsilon^{N-1}}|Du|(Q_{\nu}(x_0,\varepsilon)) = |[u](x_0)|;\label{Du}\\
& \frac{d \mathcal F(u, U_1, \dots, U_L;\cdot)}{d \mathcal H^{N-1}\lfloor{S_u}} (x_0) =
\lim_{\varepsilon \to 0^+}\frac{\mathcal F(u, U_1, \dots, U_L;Q_{\nu}(x_0,\varepsilon))}{\varepsilon^{N-1}} =
\lim_{\varepsilon \to 0^+}\frac{m(u, U_1, \dots,U_L;Q_{\nu}(x_0,\varepsilon))}{\varepsilon^{N-1}};\label{jFmu}\\
& \frac{d \mathcal F(v_j, 0, \dots,0;\cdot)}{d \mathcal H^{N-1}\lfloor{S_u}} (x_0) =
\lim_{\varepsilon \to 0^+}\frac{m(v_j,0,\dots,0;Q_{\nu}(x_0,\varepsilon))}{\varepsilon^{N-1}};\label{Fmvj}\\
& \lim_{\varepsilon \to 0^+}\frac{1}{\varepsilon^{N-1}}
\int_{Q_{\nu}(x_0,\varepsilon)}|U_i(x)|^p \, \de x = 0, \; \forall i = 1, \ldots,L;
\label{Uip}
\end{align}
where  Theorem \ref{thm4.3FHP} was used in \eqref{jFmu} and \eqref{Fmvj}.

Let $x_0$ be a fixed point in $\Omega$ satisfying the above properties, 
let $\delta \in (0,1)$ and let $\varepsilon >0$ be small enough so that $Q_{\nu}(x_0,\varepsilon)\subset \Omega$. Also, let $\delta' \in (\delta,1)$ be such that $Q_\nu(x_0,\delta\eps) \subset \subset Q_\nu(x_0,\delta'\eps)$.

Given the definition of the density $\Phi$ in \eqref{Phi}, due to \eqref{jFmu} and \eqref{Fmvj},
we want to show that
\begin{align}\label{mestjump}
\lim_{\varepsilon \to 0^+}
\frac{m(v_j,0,\dots, 0 ;Q_\nu (x_0, \varepsilon))}
{\varepsilon^{N-1}}- \lim_{\varepsilon \to 0^+}\frac{m(u,U_1,\dots, U_L; Q_\nu(x_0, \varepsilon))}{\varepsilon^{N-1}} =0
\end{align}
where $0$ is the null function from $\Omega$ to $\mathbb R^{d\times N}$.

To this end, let $(\widetilde u, \widetilde U_1, \dots, \widetilde U_L) \in \mathcal C_{HSD^p_L}(v_j, 0, \dots, 0); Q_\nu(x_0, \delta \varepsilon))$ be such that
\begin{equation}\label{infmvj}
\varepsilon^{N} + m(v_j, 0,\dots, 0; Q_\nu(x_0, \delta \varepsilon)) \geq \mathcal F(\widetilde u, \widetilde U_1, \dots, \widetilde U_L; Q_\nu(x_0, \delta \varepsilon)).
\end{equation}

Notice that, as $\widetilde u = v_j$ on $\partial Q_\nu(x_0, \delta \varepsilon)$, we have
\begin{equation}\label{trvj}
|{\rm tr} u-{\rm tr}\widetilde u|(\partial Q_\nu(x_0,\delta \varepsilon))
= \int_{\partial Q_\nu(x_0,\delta\varepsilon)} 
|{\rm tr}(\widetilde u(x)- u(x))| \, \de \mathcal H^{N-1}(x)
=\int_{\partial Q_\nu(x_0,\delta\varepsilon)} 
|{\rm tr}(v_j(x)- u(x))| \, \de \mathcal H^{N-1}(x).
\end{equation}

Define
$$
\widetilde v_\varepsilon:=\left\{
\begin{array}{ll}
\widetilde u &\hbox{ in }Q_\nu(x_0,\delta \varepsilon),\\
u &\hbox{ in } \Omega \setminus Q_\nu(x_0,\delta \varepsilon)
\end{array}
\right.
$$
and, for every $i\in \{1,\dots ,L\}$, let
$$\widetilde V^i_\varepsilon(x):= \left\{
\begin{array}{ll}
\widetilde U_i(x) & \hbox{ in }Q_\nu(x_0,\delta \varepsilon),\\
\dfrac{1}{\mathcal L^N(Q_\nu(x_0,\varepsilon)\setminus Q_\nu(x_0,\delta \varepsilon))}\displaystyle \int_{Q_\nu(x_0,\varepsilon)} U_i(x) \, \de x &\hbox{ in } \Omega \setminus Q_\nu(x_0,\delta \varepsilon).
\end{array}\right.
$$
Recall that, for every $i\in \{1,\dots, L\}$, we have 
$ \displaystyle \int_{Q_\nu(x_0,\delta \varepsilon)} \widetilde U_i(x) \, \de x=0$.
Thus, 
$(\widetilde v_\varepsilon, \widetilde V^1_{\varepsilon},\dots, \widetilde V^L_{\varepsilon})$ belongs to the class of admissible test functions 
$\mathcal C_{HSD^p_L}(u, U_1, \dots, U_L); Q_\nu(x_0,\varepsilon))$ and therefore
we obtain, using also Remark \ref{A1A2}, (H4) and \eqref{infmvj},
\begin{align}
m(u,U_1,\dots, U_L;Q_\nu(x_0,\varepsilon))& \leq  
\mathcal F(\widetilde v_\varepsilon,\widetilde V^1_\varepsilon,\dots, \widetilde V^L_\varepsilon;Q_\nu(x_0,\varepsilon))\nonumber\\
&\leq \mathcal F(\widetilde v_\varepsilon, \widetilde V^1_\varepsilon,\dots, \widetilde V^L_\varepsilon;Q_\nu(x_0,\delta'\varepsilon))+ \mathcal F(\widetilde v_\varepsilon, \widetilde V^1_\varepsilon,\dots, \widetilde V^L_\varepsilon; Q_\nu(x_0,\varepsilon)\setminus \overline{ Q_\nu(x_0,\delta\varepsilon)})\nonumber \\
&\leq \mathcal F(\widetilde v_\varepsilon, \widetilde V^1_\varepsilon,\dots, \widetilde V^L_\varepsilon;Q_\nu(x_0,\delta\varepsilon))+ \mathcal F(\widetilde v_\varepsilon, \widetilde V^1_\varepsilon,\dots, \widetilde V^L_\varepsilon; Q_\nu(x_0,\varepsilon)\setminus \overline{ Q_\nu(x_0,\delta\varepsilon)})\nonumber \\
& \hspace{1cm} + C\Big(\mathcal L^N(Q_\nu(x_0,\delta'\varepsilon) \setminus Q_\nu(x_0,\delta\varepsilon)) + |D \widetilde v_\varepsilon|(Q_\nu(x_0,\delta'\varepsilon)\setminus Q_\nu(x_0,\delta \varepsilon))\nonumber \\
& \hspace{2cm} + \sum_{i=1}^L \int_{Q_\nu(x_0,\delta'\varepsilon)\setminus Q_\nu(x_0,\delta \varepsilon)} |\widetilde V^i_\varepsilon  |^p \,\de x\Big)\nonumber \\
&\leq \mathcal F(\widetilde u, \widetilde U_1,\dots, \widetilde U_L;Q_\nu(x_0,\delta\varepsilon)) + C\Big(\mathcal L^N(Q_\nu(x_0,\varepsilon) \setminus Q_\nu(x_0,\delta\varepsilon))\nonumber \\ 
& \hspace{2cm} + \sum_{i=1}^L \int_{Q_\nu(x_0,\varepsilon)\setminus Q_\nu(x_0,\delta \varepsilon)} |\widetilde V^i_\varepsilon  |^p \,\de x 
+ |D \widetilde v_\varepsilon|(Q_\nu(x_0,\varepsilon)\setminus Q_\nu(x_0,\delta \varepsilon))\Big)\nonumber \\
&\leq \varepsilon^{N}+ m(v_j,0,\dots,0; Q_\nu(x_0,\delta\varepsilon))\nonumber \\	
& \hspace{0.8cm}+ C\Big( \varepsilon^N (1-\delta^N) + 
|D u|(Q_\nu(x_0,\varepsilon)\setminus \overline{Q_\nu(x_0,\delta \varepsilon)})+ |{\rm tr}\,\widetilde u-{\rm \tr}\, u|(\partial Q_\nu(x_0,\delta\varepsilon))\nonumber \\
&\hspace{2cm}+\sum_{i=1}^L \int_{Q_\nu(x_0,\varepsilon)\setminus Q_\nu(x_0,\delta \varepsilon)} |\widetilde V^i_\varepsilon  |^p \,\de x\Big). \label{mestjump2}
\end{align}

For every $i \in \{1,\dots,L\}$ we have, using  H\"older's inequality,
\begin{align}
\int_{Q_\nu(x_0,\varepsilon)\setminus Q_\nu(x_0,\delta \varepsilon)} 
|\widetilde V^i_\varepsilon  |^p \, \de x &\leq 
\frac{1}{\varepsilon ^{N(p-1)}(1-\delta^N)^{p-1}}\left|\int_{Q_\nu(x_0,\varepsilon)} U_i(x) \, \de x \right|^p \nonumber\\
&\leq \frac{\varepsilon^{N(p-1)}}{\varepsilon ^{N(p-1)}(1-\delta^N)^{p-1}} \|U_i\|^p_{L^p(Q_\nu(x_0,\varepsilon);\mathbb R^{d\times N})} \nonumber\\
&= \frac{1}{(1-\delta^N)^{p-1}} 
\|U_i\|^p_{L^p(Q_\nu(x_0,\varepsilon);\mathbb R^{d\times N})}. \label{mest4}
\end{align}

Hence, from \eqref{mestjump2} and \eqref{mest4}, taking into account \eqref{Uip} and Lemma \ref{FM}, it follows that
\begin{align}
\lim_{\varepsilon \to 0^+}\frac{m(u,U_1,\dots U_L; Q_\nu(x_0, \varepsilon))}{\varepsilon^{N-1}} &\leq 
\limsup_{\delta \to 1^-}\limsup_{\varepsilon \to 0^+} 
C\Big(\varepsilon +\frac{m(v_j,0,\dots, 0;Q_\nu (x_0,\delta\varepsilon))}
{\varepsilon^{N-1}}\nonumber\\
&\hspace{1cm} + \varepsilon(1-\delta^N)+\frac{1}{(1-\delta^N)^{p-1}}
\sum_{i=1}^L\frac{1}{\eps^{N-1}} \|U_i\|^p_{L^p(Q_\nu(x_0,\varepsilon);\mathbb R^{d\times N})} \nonumber\\
&\hspace{1cm} +\frac{|D u|(Q_\nu(x_0,\varepsilon)\setminus \overline{Q_\nu(x_0,\delta \varepsilon)})}{\varepsilon^{N-1}} + \frac{|{\rm tr}\tilde u-{\rm \tr} u|(\partial Q_\nu(x_0,\delta\varepsilon))}{\varepsilon^{N-1}}\Big)\nonumber
\end{align}
\begin{align}
&\hspace{2cm}\leq \lim_{\varepsilon \to 0^+}\frac{m(v_j,0,\dots 0; Q_\nu(x_0, \varepsilon))}{\varepsilon^{N-1}} + \limsup_{\delta \to 1^-}(1-\delta^N) |[u]|(x_0)\nonumber\\
&\hspace{2cm}= \lim_{\varepsilon \to 0^+}\frac{m(v_j,0,\dots 0; Q_\nu(x_0, \varepsilon))}{\varepsilon^{N-1}}\label{ineq}
\end{align}
since, by \cite[(5.79)]{AFP} and \eqref{Du},
\begin{align*}
\lim_{\varepsilon \to 0^+}\frac{|Du|(Q_\nu(x_0,\varepsilon)\setminus \overline{Q_\nu(x_0,\delta\varepsilon)})}{\varepsilon^{N-1}}\leq (1-\delta^N)|[u]|(x_0).
\end{align*}
and 
\begin{equation}\label{traces}
\lim_{\varepsilon \to 0^+}\frac{|{\rm tr}\tilde u-{\rm \tr} u|(\partial Q_\nu(x_0,\delta\varepsilon))}{\varepsilon^{N-1}}=0.
\end{equation}
To prove this last fact we change variables and use \eqref{trvj} to obtain
\begin{align*}
\lim_{\varepsilon \to 0^+}\frac{|{\rm tr}\tilde u-{\rm \tr} u|(\partial Q_\nu(x_0,\delta\varepsilon))}{\varepsilon^{N-1}}
&=\lim_{\varepsilon \to 0^+}\delta^{N-1}\int_{\partial Q_\nu}
|{\rm tr}(v_j(x_0+\delta \eps y) - u(x_0+\delta \eps y))| \, \de {\mathcal H}^{N-1}(y)\\
&= \lim_{\varepsilon \to 0^+}\delta^{N-1}\int_{\partial Q_\nu}
|{\rm tr}(v_{u^+(x_0),u^-(x_0),\nu(x_0)}(y)-u_{\delta \varepsilon}(y))| 
\, \de {\mathcal H}^{N-1}(y).
\end{align*}
where
$u_{\delta\varepsilon}(y) = u(x_0+\delta \varepsilon y)$. Then \eqref{jumppt} and \eqref{Du} yield 
$$u_{ \delta\varepsilon}\to v_{u^+(x_0),u^-(x_0),\nu(x_0)} \mbox{ in } L^1(Q_\nu;\mathbb R^d) \mbox{ as } \eps \to 0^+$$
and
$$|D u_{\delta\varepsilon}|(Q_\nu)=\frac{1}{(\delta\varepsilon)^{N-1}}
|Du|(Q_\nu(x_0,\delta \varepsilon))\to |[u]|(x_0)=|Dv_{u^+(x_0),u^-(x_0),\nu(x_0)}|(Q_\nu)
\mbox{ as } \eps \to 0^+.$$
Hence \eqref{traces} follows from \cite[Lemma 2.3]{BFM} and this completes the proof of inequality \eqref{ineq}. The reverse inequality can be shown in a similar way by interchanging the roles of $(u,U_1,\dots, U_L)$ and $(v_j,0,\dots, 0)$
leading to the conclusion stated in \eqref{mestjump}.

Theorem \ref{GMthmHSD} is thus proved.
\end{proof}

\section{Applications}\label{appl}

In this section we present some applications of the global method for relaxation obtained in Theorem \ref{GMthmHSD}. 

\subsection{$2$-level (first-order) structured deformations}\label{2level}

The first application concerns the case of a two-level structured deformation, that is, we take $L=1$ in Definition \ref{Def2.1}.
In this setting, given a deformation $u\in SBV(\Omega;\mathbb R^d)$, and two non-negative functions $W\colon\Omega\times\R{d\times N}\to[0,+\infty)$ and $\psi\colon\Omega\times\R{d}\times\S{N-1}\to[0,+\infty)$, we consider the initial energy of $u$ defined by
\begin{equation}\label{103}
	E(u)\coloneqq \int_\Omega W(x,\nabla u(x))\,\de x+\int_{\Omega\cap S_u} \psi(x,[u](x),\nu_u(x))\,\de\cH^{N-1}(x),
\end{equation}
which is determined by the bulk and surface energy densities $W$ and $\psi$, respectively.

Then, as justified by the Approximation Theorem \ref{appTHMh}, we assign an energy to a structured deformation $(g,G)\in HSD^p_1(\Omega)$, which is equivalent to saying that $(g, G)\in SD(\Omega)$ and $G \in L^p(\Omega;\mathbb R^{d\times N})$, via
\begin{equation}\label{102}
	I_p(g,G)\coloneqq \inf\Big\{\liminf_{n\to\infty} E(u_n): u_n \in SBV(\Omega;\R{d}), u_n\wSD{*}(g,G)\Big\}.
\end{equation}

To simplify notation, here and in what follows, we write $u_n\wSD{*}(g,G)$ to mean 
$u_n \to g$ in $L^1(\Omega;\mathbb R^d)$ and $\nabla u_n \wto G$ in $L^p(\Omega;\mathbb R^{d\times N})$, if $p > 1$, and $\nabla u_n \wsto G$ in $\cM(\Omega;\mathbb R^{d\times N})$, if $p = 1$. Notice that this notion of convergence coincides, in the case $L=1$, with the one given in Definition \ref{S000}.

Under our coercivity hypothesis \eqref{W4} below, the definition of $I_p$ coincides with the one considered in \cite{CF1997}, see \cite[Remark 2.15]{CF1997}.

The functional in \eqref{102} was studied in \cite{CF1997}, in the homogeneous case, and later in \cite{MMOZ}, in the case of a uniformly continuous $x$ dependence, where, under certain hypotheses on $W$ and $\psi$ (cf. \cite[Theorem 5.1]{MMOZ}) it was shown that $I_p$ admits an integral representation, that is, that there exist functions $H_p\colon\Omega\times\R{d\times N}\times\R{d\times N}\to[0,+\infty)$ and $h_p\colon\Omega\times \R{d}\times\S{N-1}\to[0,+\infty)$ such that
\begin{equation}\label{104}
	I_p(g,G)=\int_\Omega H_p(x,\nabla g(x),G(x))\,\de x+
	\int_{\Omega\cap S_g} h_p(x, [g](x),\nu_g(x))\,\de\cH^{N-1}(x).
\end{equation}
In order to present the expressions of the relaxed energy densities $H_p$ and $h_p$ we start by  introducing some notation.

For $A,B\in\R{d\times N}$ let
\begin{equation}\label{T001}
\cC_p^{\bulk}(A,B)\coloneqq \bigg\{u\in SBV(Q;\R{d}): u|_{\partial Q}(x)=Ax, \int_Q \nabla u\,\de x=B, |\nabla u|\in L^p(Q) \bigg\}, 
\end{equation}
and for $\lambda\in\R{d}$ and $\nu\in\S{N-1}$ let
 $u_{\lambda,\nu}$ be the function defined by
\begin{equation}\label{909}
u_{\lambda,\nu}(x)\coloneqq 
\begin{cases}
\lambda & \text{if $x\cdot\nu\geq0$,} \\
0 & \text{if $x\cdot\nu<0$,}
\end{cases}
\end{equation}
and consider the classes given by
\begin{equation*}
\cC_p^\surface(\lambda,\nu)\coloneqq \Big\{u\in SBV(Q_\nu;\R{d}): u|_{\partial Q_\nu}(x)=u_{\lambda,\nu}(x), \nabla u(x)=0\;\text{for $\cL^N$-a.e.~$x\in Q_\nu$}\Big\},
\end{equation*}
for $p > 1$, and for $p = 1$,
\begin{equation*}
\cC_1^\surface(\lambda,\nu)\coloneqq \Big\{u\in SBV(Q_\nu;\R{d}): u|_{\partial Q_\nu}(x)=u_{\lambda,\nu}(x), \int_Q \nabla u\,\de x=0\Big\}.
\end{equation*}
Then, the functions $H_p$ and $h_p$ appearing in \eqref{104} are given by (cf. \cite[(5.6), (5.7)]{MMOZ})
\begin{equation}\label{906}
H_p(x_0,A,B)\coloneqq \inf\bigg\{  
\int_Q W(x_0,\nabla u(x))\,\de x+\int_{Q\cap S_u} \psi(x_0,[u](x),\nu_u(x))\,\de\cH^{N-1}(x): 
u\in\cC_p^\bulk(A,B)\bigg\},
\end{equation}
for all $x_0\in\Omega$ and $A,B\in\R{d\times N}$,
and, for all $x_0\in\Omega$, $\lambda\in\R{d}$ and $\nu\in\S{N-1}$,
\begin{equation}\label{907}
\!\!\! h_p(x_0,\lambda,\nu)\coloneqq \inf\bigg\{ 
\delta_1(p) \!\! \int_{Q_\nu} \!\!\!\! W^\infty(x_0,\nabla u(x))\,\de x+ \! \int_{Q_\nu\cap S_u} \!\!\!\!\!\!\!\!\! \psi(x_0,[u](x),\nu_u(x))\,\de\cH^{N-1}(x): 
u\in\cC_p^\surface(\lambda,\nu)\bigg\},
\end{equation}
where $W^{\infty}$ denotes the \textit{recession function} at infinity of $W$ with respect to the second variable, given by
\begin{equation*}
W^{\infty}(x,A) \coloneqq \limsup_{t\rightarrow +\infty} 
\frac {W(x,tA)}{t}, \; \forall x \in \Omega, \forall A\in\R{d\times N}.
\end{equation*}
In \eqref{907}, $\delta_1(p) = 1$ if $p=1$ and $\delta_1(p) = 0$ if $p\neq 1$, so that the relaxed surface energy density depends on the recession function of $W$ only in the case $p=1$.

In what follows we obtain an integral representation result for $I_p(g,G)$, by means of Theorem \ref{GMthmHSD}, under a similar set of hypotheses on $W$ and $\psi$ as those considered in \cite{CF1997} and \cite{MMOZ}, 
but requiring only measurability, rather than uniform continuity, of $W$ in the $x$ variable. 

Precisely, we assume that $W\colon\Omega \times \mathbb R^{d \times N}\to[0,+\infty)$ and $\psi\colon \Omega \times \mathbb R^d \times \mathbb S^{N-1} \to [0,+\infty) $ are Carath\'eodory functions such that the following conditions hold: 
\begin{enumerate}
\item \label{(W1)_p}  ($p$-Lipschitz continuity) there exists $C_W >0$ such that, for a.e. $x\in\Omega$ and $A_1,A_2 \in \R{d\times N}$,
	\begin{equation*}
		|W(x,A_1) - W(x,A_2)| \leq C_W |A_1 - A_2| \big(1+|A_1|^{p-1}+|A_2|^{p-1}\big);
	\end{equation*}
\item \label{W3} there exists $A_0 \in \mathbb R^{d \times N}$ such that 
$W(\cdot, A_0)\in L^\infty(\Omega)$;
\item \label{W4} there exists $c_W>0$ such that, for a.e. $x \in \Omega$ and every $A \in  \mathbb R^{d\times N}$,
	\begin{equation*}
		c_W |A|^{p}-\frac{1}{c_W}\leq W(x,A);
	\end{equation*}
\item\label{psi_0} (symmetry) for every $x \in \Omega$, $\lambda \in \R{d}$ and 
$\nu \in \mathbb S^{N-1}$, 
	\begin{equation*}
		\psi (x, \lambda, \nu)= \psi (x,-\lambda, -\nu);
	\end{equation*}
\item\label{(psi1)} there exist $c_\psi,C_\psi > 0$ such that, for all $x\in\Omega$, $\lambda \in \R{d}$ and $\nu \in \S{N-1}$,
	\begin{equation*}
		c_\psi|\lambda| \leq \psi(x,\lambda, \nu) \leq C_\psi|\lambda |;
	\end{equation*}
\item\label{(psi2)} (positive $1$-homogeneity) for all $x\in\Omega$, $\lambda \in \R{d}$, $\nu \in \S{N-1}$ and $t >0$
		$$\psi(x,t\lambda, \nu) = t\psi(x, \lambda, \nu);$$
\item \label{(psi3)} (sub-additivity) for all $x\in\Omega$, $\lambda_1,\lambda_2 \in \R{d}$ and $\nu \in \S{N-1}$,
		\begin{equation*}
			\psi(x, \lambda_1 + \lambda_2, \nu) \leq \psi(x,\lambda_1, \nu) +\psi(x,\lambda_2, \nu);
		\end{equation*}
\item \label{(psi4)} there exists a continuous function $\omega_\psi\colon[0,+\infty)\to[0,+\infty)$ with $\omega_\psi(s)\to 0$ as $s\to0^+$ such that, for every $x_0,x_1\in\Omega$, $\lambda \in \R{d}$ and $\nu \in \S{N-1}$,
		\begin{equation*}
	|\psi(x_1,\lambda,\nu)-\psi(x_0,\lambda,\nu)|\leq\omega_\psi(|x_1-x_0|)|\lambda|.
		\end{equation*}
\end{enumerate}

\medskip

Under this set of hypotheses we prove the following theorem.

\begin{theorem}\label{representation}
	Let $p\geq 1$ and let $\Omega \subset \mathbb R^N$ be a bounded, open set. 
	Consider $E$ given by \eqref{103} where  $W\colon\Omega\times\R{d\times N}\to[0,+\infty)$ and $\psi\colon\Omega\times\R{d}\times\S{N-1}\to[0,+\infty)$ satisfy \eqref{(W1)_p}-\eqref{(psi1)} and $\psi$ is continuous. Let $(g,G) \in SD(\Omega)$, with 
	$G \in L^p(\Omega;\mathbb R^{d \times N})$, 
	and assume that $I_p(g,G)$ is defined by \eqref{102}. 
	
	Then,  there exist 
	$f: \Omega \times \mathbb R^{d \times N} \times \mathbb R^{d \times N} \to [0,+\infty)$, $\Phi: \Omega \times \mathbb R^d \times S^{N-1}\to [0, +\infty)$ such that  
	\begin{align}\label{reprelax}
		I_p(g,G) =  \int_\Omega f(x,\nabla g(x),G(x)) \, \de x + 
		\int_{\Omega\cap S_g}\Phi(x,[g](x),\nu_g(x)) \, \de \mathcal H^{N-1}(x),
		\end{align}
	 where the relaxed energy densities are given by
	\begin{align}\label{fdef}
		f(x_0, \xi, B):=	\limsup_{\varepsilon \to 0^+}\frac{m(\xi(\cdot-x_0),B; Q(x_0,\varepsilon))}{\varepsilon^N},
	\end{align}
	\begin{align}\label{Phidef}
		\Phi(x_0,\lambda,\theta,\nu):= \limsup_{\varepsilon \to 0^+}\frac{m(u_{\lambda-\theta,\nu}(\cdot-x_0), 0; Q_{\nu}(x_0,\varepsilon))}{\varepsilon ^{N-1}},
	\end{align}
	for all $x_0\in \Omega$, $ \theta,\lambda \in \mathbb R^d$, 
	$\xi, B \in \mathbb R^{d \times N}$ and $\nu \in \mathbb S^{N-1}$. In the above expressions $0$ denotes the zero $\mathbb R^{d \times N}$ matrix,  
	$u_{\lambda-\theta, \nu}(y) := \begin{cases} 
	\lambda - \theta, &\hbox{if } y\cdot \nu > 0\\
	0, &\hbox{ if } y\cdot \nu \leq 0
	\end{cases}$,
	the functional $m\colon SD(\Omega)
	\times \mathcal O_\infty(\Omega)\to [0,+\infty)$ is given by \eqref{mdef} with $L=1$ and $\mathcal F= I_p$, and $\mathcal C_{HSD^p_1}(g,G;O)$ is given by \eqref{classM}, taking into account that $HSD^p_1(\Omega)$ in Definition \ref{Def2.1} coincides with the set of fields $(g, G)\in SD(\Omega)$ such that 
	$G \in L^p(\Omega;\mathbb R^{d\times N})$.
	
	Furthermore, if $p>1$ and $\psi$ also satisfies \eqref{(psi2)}-\eqref{(psi4)},  then $\Phi(x_0,\lambda,\theta,\nu) = h_p(x_0,\lambda-\theta,\nu)$,
	for every $x_0\in \Omega$, $\theta,\lambda \in \mathbb R^d$ and 
	$\nu \in \mathbb S^{N-1}$, where $h_p$ is the function given in \eqref{907}.
\end{theorem}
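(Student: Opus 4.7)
The plan is to apply Theorem \ref{GMthmHSD} with $L = 1$ and $\mathcal{F} = I_p$. The first step is to verify that $I_p$ satisfies the four abstract hypotheses (H1)--(H4). Hypothesis (H1), that $I_p(g, G; \cdot)$ is the restriction of a Radon measure, follows from the De Giorgi--Letta criterion: subadditivity over open sets is inherited from $E$, while the fundamental estimate required for inner regularity is obtained by a cut-off construction analogous to the one used in Lemma \ref{estHSDL}, joining two near-optimal recovery sequences on overlapping open sets and exploiting the $p$-Lipschitz continuity \eqref{(W1)_p} to control the bulk energy along the joined sequence, together with an averaging correction for the $L^p$-component. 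Hypothesis (H2) follows by a standard diagonal argument on near-optimal recovery sequences, using the sequential definition of $I_p$ as a relaxation. Hypothesis (H3) is immediate from the locality of $E$. Hypothesis (H4) is a consequence of the coercivity assumptions \eqref{W4} and \eqref{(psi1)} (together with lower semicontinuity of the $L^p$-norm and of $|D^s u|$) for the lower bound, and of the approximation Theorem \ref{appTHMCF} combined with the growth conditions \eqref{(W1)_p}, \eqref{W3} and \eqref{(psi1)} for the upper bound, where an explicit recovery sequence is constructed and its energy estimated.

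Since $I_p$ is translation invariant in $u$ (because $E$ depends on $u$ only through $\nabla u$ and $[u]$), Remark \ref{traslinv} reduces the formulas \eqref{f}--\eqref{Phi} of Theorem \ref{GMthmHSD} to \eqref{fdef}--\eqref{Phidef} and establishes \eqref{reprelax}.

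For the second assertion, assuming $p > 1$ and that $\psi$ satisfies \eqref{(psi2)}--\eqref{(psi4)}, the identification $\Phi(x_0, \lambda, \theta, \nu) = h_p(x_0, \lambda - \theta, \nu)$ is proved by a double-inequality rescaling argument. For $\Phi \leq h_p$, given an almost-minimizer $u^* \in \cC_p^\surface(\lambda - \theta, \nu)$ for the cell formula \eqref{907}, the rescaled function $\tilde u(x) := u^*((x - x_0)/\eps)$ on $Q_\nu(x_0, \eps)$ satisfies $\nabla \tilde u = 0$ a.e.\ and $\tilde u\vert_{\partial Q_\nu(x_0, \eps)} = u_{\lambda-\theta, \nu}(\cdot - x_0)$, so $(\tilde u, 0) \in \mathcal C_{HSD^p_1}(u_{\lambda-\theta,\nu}(\cdot-x_0), 0; Q_\nu(x_0, \eps))$. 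Testing $I_p(\tilde u, 0; Q_\nu(x_0, \eps))$ with the stationary sequence $u_n = \tilde u$, using \eqref{(W1)_p} together with \eqref{W3} to infer that $W(\cdot, 0) \in L^\infty(\Omega)$ (whence the bulk contribution is $O(\eps^N)$), and invoking the continuity hypothesis \eqref{(psi4)} of $\psi$ in $x$ to replace $\psi(x, \cdot, \cdot)$ by $\psi(x_0, \cdot, \cdot)$ in the surface integral up to an $\omega_\psi(\eps) \to 0$ error, yields $\Phi \leq h_p$ upon dividing by $\eps^{N-1}$ and passing to the limsup. For the reverse inequality, starting from a near-optimal pair $(v, V)$ for $m$ with corresponding recovery sequence $v_n \in SBV$, the coercivity \eqref{W4} gives $\int_{Q_\nu(x_0, \eps)} |\nabla v_n|^p \, \de x \leq C\eps^{N-1}$; hence the rescaled gradients $\nabla w_n(y) := \eps\,\nabla v_n(x_0 + \eps y)$ satisfy $\int_{Q_\nu} |\nabla w_n|^p \, \de y = O(\eps^{p-1}) \to 0$ because $p > 1$. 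A diagonal extraction combined with a boundary-trace correction (projection onto the boundary datum $u_{\lambda-\theta, \nu}$ via the $SBV$ trace theorem, along the lines of the analogous surface-density arguments in \cite{CF1997, MMOZ}) then produces a competitor for \eqref{907} whose energy is arbitrarily close to $\Phi(x_0, \lambda, \theta, \nu)$, yielding $h_p \leq \Phi$.

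The main technical obstacle is the reverse inequality $h_p \leq \Phi$, and it is here that the condition $p > 1$ is essential: it guarantees that the rescaled gradients vanish in $L^p$, so that the bulk contribution to the cell formula \eqref{907} disappears. In the borderline case $p = 1$, the recession function $W^\infty$ appears in \eqref{907}, reflecting the persistent contribution of the rescaled gradients at the surface level, and a more elaborate argument would be required (which is why Theorem \ref{representation} identifies $\Phi$ with $h_p$ only under the additional assumption $p > 1$).
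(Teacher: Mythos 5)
Your proposal follows essentially the same route as the paper: verify (H1)--(H4) for the localized functional $I_p(\cdot,\cdot;O)$, apply Theorem~\ref{GMthmHSD}, exploit translation invariance via Remark~\ref{traslinv}, and then establish $\Phi = h_p$ when $p>1$ by a double inequality. For (H1), what you call the fundamental estimate in the De~Giorgi--Letta framework is precisely the nested subadditivity the paper proves (by cutting along a level set of the distance function and joining two recovery sequences, then invoking \cite[Prop.~2.22]{CF1997}); your sketch is in the same spirit. Your verifications of (H2)--(H4) also track the paper, though two small omissions are worth flagging: for the lower bound in (H4) the paper normalizes $W$ by an additive constant to absorb the $-1/c_W$ in \eqref{W4}, and the control of $|Dg|(O)$ requires combining both the $W$-coercivity \emph{and} the $\psi$-coercivity \eqref{(psi1)}, since without the latter there is no control on $|D^j u_n|$.

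Where you genuinely diverge from the paper is in the proof of $\Phi\leq h_p$. The paper expands $m$ via $I_p$ and its sequential infimum, restricts the class of competing sequences to those with $\nabla u_n=0$, and then invokes the periodicity argument of \cite[Prop.~4.2]{CF1997} to pass from a sequential infimum to the single-function cell formula. You instead rescale an almost-minimizer $u^*\in\cC_p^\surface(\lambda-\theta,\nu)$ directly to $Q_\nu(x_0,\eps)$ and feed it into $m$. This is a cleaner route (it uses only $m\leq I_p(\tilde u,0;\cdot)$ and avoids the periodicity lemma), but as written it has a gap: membership in $\cC_p^\surface$ only imposes the boundary datum $u_{\lambda-\theta,\nu}$ in the \emph{trace} sense, while $\cC_{HSD^p_1}(\cdot,\cdot;Q_\nu(x_0,\eps))$ as defined in \eqref{classM} requires the competitor to equal $u_{\lambda-\theta,\nu}(\cdot-x_0)$ in a full \emph{neighbourhood} of $\partial Q_\nu(x_0,\eps)$. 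Thus $(\tilde u,0)$ is not automatically admissible, and one must first modify $u^*$ near $\partial Q_\nu$ (e.g.\ replace it by $u_{\lambda-\theta,\nu}$ on $Q_\nu\setminus(1-\delta)Q_\nu$ for a good $\delta$, paying a small surface-energy penalty via \eqref{(psi1)} and a coarea/Fubini selection) before rescaling. This is routine but needs to be said; the paper sidesteps it entirely by routing through \cite[Prop.~4.2]{CF1997}. Your argument for $h_p\leq\Phi$ is aligned with the paper's (both ultimately rely on the boundary/gradient replacement arguments of \cite[Props.~4.2, 4.4]{CF1997}), and your explicit rescaling estimate $\int_{Q_\nu}|\nabla w_n|^p\,\de y = O(\eps^{p-1})$ usefully isolates exactly why $p>1$ is needed. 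Overall the proposal is sound, with the boundary-neighbourhood issue being the one substantive point to repair.
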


\begin{proof}[Proof]
	
Given $O \in \mathcal O(\Omega)$ and $(g,G) \in SD(\Omega)$, with 
$G \in L^p(\Omega;\mathbb R^{d \times N})$, we introduce the localized version of 
$I_p(g, G)$, namely
\begin{align*}
I_p(g,G;O)\coloneqq \inf\Big\{\liminf_{n\to\infty} E(u_n): u_n \in SBV(O;\R{d}), u_n\wSD{*}(g,G) \hbox{ in }O\Big\}.
\end{align*}

Our goal is to verify that $I_p(g,G;O)$ satisfies assumptions (H1)-(H4) of 
Theorem \ref{GMthmHSD} in the case $L=1$.

We start by proving the following nested subadditivity result: if $O_1, O_2, O_3$ are open subsets of $\Omega$ such that 
$O_1 \Subset O_2 \subseteq O_3$, then
$$I_p(g,G;O_3) \leq I_p(g,G;O_2) + I_p(g,G;O_3\setminus \overline {O_1}).$$
Indeed, let $u_n \in SBV(O_2;{\mathbb{R}}^d)$ and 
$v_n \in SBV(O_3\setminus \overline{O_1};{\mathbb{R}}^d)$ be two sequences such that $u_n \rightarrow g$ in $L^1(O_2;{\mathbb{R}}^d)$, 
$\nabla u_n \rightharpoonup G$ in $L^p(O_2;{\mathbb{R}}^{d\times N})$, 
$v_n \rightarrow g$ in $L^1(O_3\setminus \overline{O_1};{\mathbb{R}}^d)$, 
$\nabla v_n \rightharpoonup G$ in 
$L^p(O_3\setminus \overline{O_1};{\mathbb{R}}^{d\times N})$, and, in addition, 
\begin{equation*}
I_p(g,G;O_2) = \lim_{n\to +\infty}\left[\int_{O_2}W(x, \nabla u_n(x)) \, \de x + \int_{S_{u_n}\cap O_2}
\psi(x,[u_n](x),\nu_{u_n}(x)) \, \de\mathcal{H}^{N-1}(x)\right] 
\end{equation*}
\noindent and 
\begin{equation*}
I_p(g,G;O_3\setminus \overline{O_1}) = \lim_{n\to +\infty} 
\left[\int_{O_3\setminus \overline{O_1}}W(x, \nabla v_n(x)) \, \de x + \int_{S_{v_n}\cap(O_3\setminus \overline{O_1})} \psi(x,[v_n](x),\nu_{v_n}(x))\, \de\mathcal{H}^{N-1}(x)\right].
\end{equation*}
\noindent Notice that 
\begin{equation}  \label{L1conv}
u_n - v_n \rightarrow 0 \; \text{ in }\; 
L^1(O_2 \cap (O_3\setminus \overline{O_1});{\mathbb{R}}^d)
\end{equation}
and 
\begin{equation*}
\nabla u_n - \nabla v_n \rightharpoonup 0 \; \text{ in }\; 
L^p(O_2 \cap(O_3\setminus \overline{O_1});{\mathbb{R}}^{d\times N}).
\end{equation*}
\noindent For $\delta > 0$ define 
\begin{equation*}
O_{\delta} := \{ x \in O_2: \,\, \mbox{dist}(x, O_1) < \delta\}.
\end{equation*}
\noindent For $x \in O_3$ let $d(x):= \mbox{dist}(x, O_1)$. Since the distance
function to a fixed set is Lipschitz continuous (see \cite[Exercise 1.1]{Z}), we can apply the change of variables formula \cite[Section 3.4.3, Theorem 2]{EG}, to obtain 
\begin{equation*}
\int_{O_{\delta}\setminus \overline{O_1}} |u_n(x) - v_n(x)| \, |\det\nabla d(x)| \, \de x =
\int_{0}^{\delta}\left [ \int_{d^{-1}(y)} |u_n(x) - v_n(x)| \, 
\de \mathcal{H}^{N-1}(x)\right]\, \de y
\end{equation*}
\noindent and, as $|\det\nabla d|$ is bounded and \eqref{L1conv} holds,
 by Fatou's Lemma, it
follows that for almost every $\rho \in [0, \delta]$ we have 
\begin{equation}  \label{aer}
\liminf_{n\rightarrow +\infty} 
\int_{d^{-1}(\rho)} |u_n(x) - v_n(x) |\, \de\mathcal{H}^{N-1}(x) 
= \liminf_{n\rightarrow +\infty} \int_{\partial O_{\rho}}
|u_n(x) - v_n(x) |\, \de\mathcal{H}^{N-1}(x) = 0.
\end{equation}
Fix $\rho_0 \in [0, \delta]$ such that 
$\|G \chi_{O_2}\|(\partial O_{\rho_0}) = 0$, 
$\|G \chi_{O_3 \setminus \overline{O_1}}\|(\partial O_{\rho_0}) = 0$ and
such that \eqref{aer} holds. For this choice of $\rho_0$, we may pass to subsequences of  $u_n$ and $v_n$ (not relabelled) such that the liminf in \eqref{aer} is actually a limit. \color{black} We observe that $O_{\rho_0}$ is a set with
locally Lipschitz boundary since it is a level set of a Lipschitz function
(see, e.g., \cite{EG}). Hence we can consider $u_n, v_n$ 
on $\partial O_{ \rho_0}$ in the sense of traces and define 
\begin{equation*}
w_n = 
\begin{cases}
u_n & \text{ in}\; \overline{O}_{\rho_0} \\ 
v_n & \text{ in}\; O_3\setminus \overline{O}_{\rho_0}.
\end{cases}
\end{equation*}
\noindent Then, by the choice of $\rho_0$, $w_n$ is admissible for 
$I_p(g,G;O_3)$ so, by \eqref{(psi1)}, \eqref{L1conv} and \eqref{aer}, we obtain 
\begin{align*}
I_p(g,G;O_3) &\leq  \liminf_{n\to +\infty} 
\left[\int_{O_3}W(x, \nabla w_n(x)) \, \de x +
\int_{S_{w_{n}}\cap O_3} \psi(x,[w_n](x),\nu_{w_n}(x))\, \de\cH^{N-1}(x)\right] \\
&\leq  \liminf_{n\to +\infty} 
\left[\int_{O_2}W(x, \nabla u_n(x)) \, \de x +
\int_{S_{u_{n}}\cap O_2} \psi(x,[u_n](x),\nu_{u_n}(x))\, \de\cH^{N-1}(x) \right. \\
&  \hspace{1cm} + \int_{O_3\setminus \overline{O_1}}W(x, \nabla v_n(x)) \, \de x  + \int_{S_{v_{n}}\cap (O_3 \setminus \overline{O_1})}
\psi(x,[v_n](x),\nu_{v_n}(x))\, \de\cH^{N-1}(x) \\
&  \left. \hspace{1cm} + \int_{S_{w_n} \cap \partial O_{\rho_0}} 
C |u_n(x) - v_n(x)| \, \de\mathcal{H}^{N-1}(x) \right] \\
& =  I_p(g,G;O_2) + I_p(g,G;O_3 \setminus \overline{O_1}),
\end{align*}
\noindent which concludes the proof.

From here, the reasoning in 
\cite[ Proposition 2.22]{CF1997}, 	
which is still valid with the same proof in the non-homogeneous case, yields (H1).		
		
To show that (H2) holds, we argue as in \cite[Proposition 5.1]{CF1997}. Indeed, we can prove lower semicontinuity of $I_p(\cdot,\cdot;O)$ along sequences $(g_n,G_n)$ converging in 
$L^1(O;\mathbb R^d)_{\it strong}\times L^p(O;\mathbb R^{d\times N})_{\it weak}$
(the second convergence is weak star in $\cM(\Omega;\R{d\times N})$, if $p=1$).
	
(H3) is an immediate consequence of the previous lower semicontinuity property in $O$, as observed in \cite[eq. (2.2)]{BFM}, whereas
(H4) follows by standard arguments (as in \cite[Lemma 2.18]{CF1997}) from (1), (2), (3) and (6) above and by the lower semicontinuity of integral functionals of power type and the total variation along weakly converging sequences. We point out that in order to obtain the lower bound in (H4) we can replace, without loss of generality, $W$ by $W + \frac{1}{C_W}$.
	
Hence, Theorem \ref{GMthmHSD} can be applied to conclude that,
for every $(g,G)\in SD(\Omega) \times L^p(\Omega;\mathbb R^{d \times N})$, 
we have
$$I_p(g,G) =  \int_\Omega f(x, g(x),\nabla g(x), G(x)) \, \de x + 
\int_{\Omega\cap S_g}\Phi(x, g^+(x), g^-(x),\nu_g(x)) \,\de \mathcal H^{N-1}(x),$$
where the relaxed densities $f$ and $\Phi$ are given by 
$$
f(x_0, a, \xi, B) =	\limsup_{\varepsilon \to 0^+}\frac{m(a+ \xi(\cdot-x_0),B; Q(x_0,\varepsilon))}{\varepsilon^N},$$
and
$$\Phi(x_0, \lambda, \theta, \nu)= \limsup_{\varepsilon \to 0^+}\frac{m(v_{\lambda, \theta,\nu}(\cdot-x_0), 0; Q_{\nu}(x_0,\varepsilon))}{\varepsilon ^{N-1}},$$
for all $x_0\in \Omega$, $ a, \theta,\lambda \in \mathbb R^d$, 
$\xi, B \in \mathbb R^{d \times N}$, $\nu \in \mathbb S^{N-1}$.

It is easy to see that the functional $I_p$ is invariant under translation in the first variable, that is,  
$$I_p(g+a,G;O)= I_p(g,G;O), \; \forall (g,G) \in SD(\Omega), O \in \mathcal O(\Omega), a \in \mathbb R^d.$$ 
Indeed, it suffices to notice that if $\{u_n\}$ is admissible for $I_p(g,G;O)$, then the sequence $u_n + a$ is admissible for $I_p(g+a,G;O)$.
Hence, taking into account Remark \ref{traslinv} and the abuse of notation stated therein, we obtain \eqref{reprelax} with $f$ and $\Phi$ given by 
\eqref{fdef} and \eqref{Phidef}, respectively.

On the other hand, for $p>1$, Theorem \ref{thm4.3FHP} and the fact that $\mathcal F= I_p$, yield, 
for every $x_0\in \Omega$, $\lambda, \theta \in \mathbb R^{d}$ and
$\nu \in \mathbb S^{N-1}$, 
\begin{align*}
\Phi(x_0,\lambda,\theta,\nu) &= \Phi(x_0,\lambda -\theta,\nu) = 
\limsup_{\varepsilon \to 0^+}\frac{m(u_{\lambda-\theta,\nu}(\cdot-x_0), 0; Q_{\nu}(x_0,\varepsilon))}{\varepsilon ^{N-1}} \\
&=\limsup_{\varepsilon \to 0^+}\frac{I_p(u_{\lambda-\theta,\nu}(\cdot-x_0), 0; Q_{\nu}(x_0,\varepsilon))}{\varepsilon ^{N-1}}\\
&= \limsup_{\varepsilon \to 0^+}\frac{1}{\varepsilon^{N-1}}
\inf\Bigg\{\liminf_{n\to+\infty}\left[\int_{Q_\nu(x_0,\varepsilon)} 
\hspace{-0,85cm}W(x,\nabla u_n(x)) \, \de x + \int_{Q_\nu(x_0,\varepsilon)\cap S_{u_n}}
\hspace{-0,81cm}\psi(x, [u_n](x),\nu_{u_n}(x)) \,\de\mathcal H^{N-1}(x)\right] :\\
\\
&\hspace{3,7cm} u_n \in SBV(Q_\nu(x_0,\varepsilon);\mathbb R^d), 
u_n \to u_{\lambda-\theta,\nu}(\cdot-x_0) \mbox{ in }  L^1(Q_\nu(x_0,\varepsilon);\mathbb R^d),\\
&\hspace{8,7cm} \nabla u_n \rightharpoonup 0 
\mbox{ in } L^p(Q_\nu(x_0,\varepsilon);\mathbb R^{d\times N}) \Bigg\} \\
&\leq \limsup_{\varepsilon \to 0^+}\frac{1}{\varepsilon^{N-1}}
\inf\Bigg\{\liminf_{n\to+\infty}\int_{Q_\nu(x_0,\varepsilon)\cap S_{u_n}}
\hspace{-0,78cm}\psi(x, [u_n](x),\nu_{u_n}(x)) \,\de\mathcal H^{N-1}(x) :\\
\\
&\hspace{3,7cm} u_n \in SBV(Q_\nu(x_0,\varepsilon);\mathbb R^d), 
u_n \to u_{\lambda-\theta,\nu}(\cdot-x_0) \mbox{ in }  L^1(Q_\nu(x_0,\varepsilon);\mathbb R^d),\\
&\hspace{8,7cm} \nabla u_n  = 0 
\hbox{ a.e. in } Q_\nu(x_0,\varepsilon)\Bigg\},
\end{align*}
where $u_{\lambda-\theta, \nu}$ is given by \eqref{909}, with $\lambda$ replaced by $\lambda - \theta$, and we have taken into account the growth condition on $W$ given by (1) and hypothesis (2), and the fact that the latter class of test functions is contained in the initial one.

Given that this last expression no longer depends on the initial bulk density $W$, but only on $\psi$ for which the uniform continuity condition \eqref{(psi4)} holds, we may apply this condition to replace $x$ by $x_0$ and obtain
\begin{align*}
\Phi(x_0,\lambda,\theta,\nu) &
\leq \limsup_{\varepsilon \to 0^+}\frac{1}{\varepsilon^{N-1}}
\inf\Bigg\{\liminf_{n\to+\infty}\int_{Q_\nu(x_0,\varepsilon)\cap S_{u_n}}
\hspace{-0,78cm}\psi(x_0, [u_n](x),\nu_{u_n}(x)) \,\de\mathcal H^{N-1}(x) :\\
\\
&\hspace{3,7cm} u_n \in SBV(Q_\nu(x_0,\varepsilon);\mathbb R^d), 
u_n \to u_{\lambda-\theta,\nu}(\cdot-x_0) \mbox{ in }  L^1(Q_\nu(x_0,\varepsilon);\mathbb R^d),\\
&\hspace{8,7cm} \nabla u_n  = 0 
\hbox{ a.e. in } Q_\nu(x_0,\varepsilon)\Bigg\}.
\end{align*}
We now invoke the periodicity argument used in the first part of the proof of \cite[Proposition 4.2]{CF1997} to conclude that
\begin{align*}
\Phi(x_0,\lambda,\theta,\nu) & 
\leq\limsup_{\varepsilon \to 0^+}\frac{1}{\varepsilon^{N-1}}
\inf\Bigg\{\int_{Q_\nu(x_0,\varepsilon)\cap S_v}
\psi(x_0, [v](x),\nu_v(x)) \,\de\mathcal H^{N-1}(x): 
v \in SBV(Q_\nu(x_0,\varepsilon);\mathbb R^d),\\
&\hspace{4,2cm}
\nabla v(x) = 0 \hbox{ a.e. in }  Q_\nu(x_0,\varepsilon), 
v(\cdot - x_0)|_{\partial Q_\nu(x_0,\varepsilon)} = u_{\lambda-\theta,\nu}(\cdot - x_0) \Bigg\},
\end{align*}
which, by a simple change of variables, coincides with
\begin{align*}
&\inf\Bigg\{\int_{Q_\nu\cap S_v}
\psi(x_0, [u](y),\nu_u(y)) \,\de\mathcal H^{N-1}(y): 
u \in SBV(Q_\nu;\mathbb R^d),
\nabla u(x)= 0 \hbox{ a. e. in } Q_\nu,  
u|_{\partial Q_\nu} = u_{\lambda-\theta,\nu} \Bigg\}\\
&\hspace{1cm}= h_p(x_0, \lambda - \theta, \nu),
\end{align*} 
so it follows that
$$\Phi(x_0,\lambda,\theta,\nu) \leq  h_p(x_0, \lambda - \theta, \nu).$$

To prove the reverse inequality, we use the fact that $W\geq 0$ to obtain
\begin{align*}
	\Phi(x_0,\lambda,\theta,\nu) 
	&=\limsup_{\varepsilon \to 0^+}\frac{I_p(u_{\lambda-\theta,\nu}(\cdot-x_0), 0; Q_{\nu}(x_0,\varepsilon))}{\varepsilon ^{N-1}}\\
	&\geq \limsup_{\varepsilon \to 0^+}\frac{1}{\varepsilon^{N-1}}
	\inf\Bigg\{\liminf_{n\to+\infty}\int_{Q_\nu(x_0,\varepsilon)\cap S_{u_n}}
	\hspace{-0,78cm}\psi(x, [u_n](x),\nu_{u_n}(x)) \,\de\mathcal H^{N-1}(x) :\\
	\\
	&\hspace{2,7cm} u_n \in SBV(Q_\nu(x_0,\varepsilon);\mathbb R^d), 
	u_n \to u_{\lambda-\theta,\nu}(\cdot-x_0) \mbox{ in }  L^1(Q_\nu(x_0,\varepsilon);\mathbb R^d),\\
	&\hspace{8,7cm} 
	\nabla u_n \rightharpoonup 0\mbox{ in } L^p(Q_\nu(x_0,\varepsilon);\mathbb R^{d\times N}) 
	\Bigg\}
	\end{align*}
	\begin{align*}
	& \hspace{1cm}= \limsup_{\varepsilon \to 0^+}\frac{1}{\varepsilon^{N-1}}
		\inf\Bigg\{\liminf_{n\to+\infty}\int_{Q_\nu(x_0,\varepsilon)\cap S_{u_n}}
		\hspace{-0,78cm}\psi(x_0, [u_n](x),\nu_{u_n}(x)) \,\de\mathcal H^{N-1}(x) :\\
		\\
		&\hspace{3,7cm} u_n \in SBV(Q_\nu(x_0,\varepsilon);\mathbb R^d), 
		u_n \to u_{\lambda-\theta,\nu}(\cdot-x_0) \mbox{ in }  L^1(Q_\nu(x_0,\varepsilon);\mathbb R^d),\\
		&\hspace{8,7cm} 
		\nabla u_n \rightharpoonup 0\mbox{ in } L^p(Q_\nu(x_0,\varepsilon);\mathbb R^{d\times N}) 
		\Bigg\},
\end{align*}
where the uniform continuity of $\psi$ in the first variable was used in the final equality.

We now argue as in \cite[Propositions 4.2 and 4.4]{CF1997}, in order to replace  each weakly converging sequence $u_n$ by one which converges strongly to $0$ in $L^p$. In this way, we are lead to the conclusion that 
$$\Phi(x_0,\lambda, \theta, \nu) \geq h_p(x_0, \lambda-\theta, \nu).$$
We have thus proved that, for $p > 1$,
$$\Phi(x_0,\lambda,\theta,\nu) =  h_p(x_0, \lambda - \theta, \nu),$$
for every $x_0\in \Omega$, $\lambda, \theta \in \mathbb R^d$ and 
$\nu \in \mathbb S^{N-1}$, where $h_p$ is the function given in \eqref{907}.
 
\end{proof}

\begin{theorem}\label{contrepresentation}	
Let $p \geq 1$. Under the conditions of the previous theorem, if, in addition to the hypotheses stated therein, the density $W$ also satisfies
\begin{enumerate}
\item[(9)]\label{(9)}  there exists a continuous function $\omega_W:[0,+\infty)\to[0,+\infty)$ such that 
$\displaystyle \lim_{t \to 0^+} \omega_W(t)=0$ and
$$|W(x_1,A)-W(x_0,A)|\leq\omega_W(|x_1-x_0|)(1+|A|^{p}), \; 
\forall x_0, x_1 \in \Omega, A \in \mathbb R^{d\times N};$$
\item[(10)] \label{Winfty} if $p=1$, there exist $\alpha \in (0,1)$ and $L>0$ such that 	
\begin{equation*}
\bigg |W^{\infty}(x,A) - \frac{W(x,tA)}{t}\bigg| \leq \frac{C}{t^{\alpha}}, 	\end{equation*}
for all  $t>L$, $x \in \Omega$ and $A \in \mathbb R^{d \times N}$ with $|A|=1,$
\end{enumerate}
then 
\eqref{reprelax} holds for every $(g,G) \in SD(\Omega)$ such that
$G \in L^p(\Omega;\mathbb R^{d\times N})$,
with
$f(x_0,\xi, B) = H_p(x_0, \xi, B)$, for every $x_0\in \Omega$, 
$\xi, B \in \mathbb R^{d \times N}$, where $H_p$ is given by \eqref{906},
and $\Phi(x_0,\lambda,\theta,\nu) =  h_p(x_0, \lambda - \theta, \nu),$
for every $x_0\in \Omega$, $\lambda, \theta \in \mathbb R^d$ and 
$\nu \in \mathbb S^{N-1}$, where $h_p$ is the function given in \eqref{907}.
\end{theorem}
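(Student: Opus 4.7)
The starting point will be Theorem~\ref{representation}, which already supplies the representation
$$I_p(g,G) = \int_\Omega f(x,\nabla g,G)\,\de x + \int_{\Omega \cap S_g} \Phi(x,[g],\nu_g)\,\de \mathcal H^{N-1},$$
with $f,\Phi$ defined by the cell formulae \eqref{fdef}--\eqref{Phidef}, and has already established $\Phi = h_p$ in the case $p>1$. What remains, therefore, is to prove $f = H_p$ for every $p \geq 1$ and $\Phi = h_p$ also in the case $p=1$.

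For the bulk identification $f = H_p$, I would fix $x_0 \in \Omega$ and $\xi, B \in \R{d \times N}$ and rescale: given an almost-optimal competitor $(u_n, U_n) \in \mathcal C_{HSD^p_1}(\xi(\cdot-x_0),B;Q(x_0,\varepsilon))$ for $m(\xi(\cdot - x_0), B; Q(x_0, \varepsilon))$, I set $v_n(y) \coloneqq \varepsilon^{-1}(u_n(x_0 + \varepsilon y) - \xi \varepsilon y)$, so that $v_n|_{\partial Q}=0$, and $V_n(y) \coloneqq U_n(x_0 + \varepsilon y) - \xi$, whose mean over $Q$ equals $B-\xi$. Hypothesis~(9) replaces $W(x,A)$ by $W(x_0,A)$ up to an error $\omega_W(\varepsilon)(1+|A|^p)$, which by the coercivity upper bound in (H4) integrates to a vanishing contribution after division by $\varepsilon^N$, while \eqref{(psi4)} freezes $\psi$ at $x_0$ up to an error $\omega_\psi(\varepsilon)|[u_n]|$ that vanishes for the same reason. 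A diagonal argument then identifies $f(x_0,\xi,B)$ with the frozen-$x_0$ cell-formula infimum on $Q$, and a translation of test function matches it with $H_p(x_0,\xi,B)$ as defined in \eqref{906}. The reverse inequality is obtained by inserting an almost-optimal candidate for $H_p(x_0,\xi,B)$, rescaled to $Q(x_0,\varepsilon)$ and extended by $\xi(\cdot - x_0)$ in an annular region near $\partial Q(x_0,\varepsilon)$, with $V_n$ adjusted by a small constant matrix on the annulus to match the mean constraint, exactly in the spirit of Lemma~\ref{estHSDL}.

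For $\Phi = h_p$ in the case $p=1$, the argument of Theorem~\ref{representation} breaks down at the point where the bulk term was discarded: admissible sequences now satisfy only $\nabla u_n \wsto 0$ in $\mathcal M(Q_\nu(x_0,\varepsilon);\R{d\times N})$, so the gradients can concentrate and contribute a non-trivial asymptotic bulk term, which is precisely the $\delta_1(p)\int W^\infty$ contribution in \eqref{907}. Hypothesis~(10) supplies the uniform rate $|W(x,tA)/t - W^\infty(x,A)| \leq Ct^{-\alpha}$ for $|A|=1$, $t>L$, which is what is needed to pass from $W$ to its positively $1$-homogeneous recession along sequences with gradients concentrating as measures; together with hypothesis~(9) this allows the freezing of $x$ at $x_0$ in the bulk integrand. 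The remainder of the surface argument then runs parallel to Theorem~\ref{representation}: \eqref{(psi4)} freezes $\psi$, sub-additivity and positive $1$-homogeneity of $\psi$ handle boundary modifications to enforce the admissible boundary datum $u_{\lambda-\theta,\nu}$ and the mean constraint on $U_n$, and a change of variables $y=(x-x_0)/\varepsilon$ brings the rescaled problem into the cell formula \eqref{907}.

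The subtlest obstacle I foresee is the $p=1$ identification, where one must decompose the gradients of the approximating sequence into a part on which $|\nabla u_n|$ is bounded (where the uniform continuity of $W$ in both variables suffices) and a part on which $|\nabla u_n|$ is large (where hypothesis~(10) must be combined with a scaling argument writing $\nabla u_n = |\nabla u_n| A_n$ with $|A_n|=1$) in order to justify the replacement of $W$ by $W^\infty$ in the limit without losing control of the error. The bulk identification is technically easier, the only delicate point being to preserve the mean constraint on $U_n$ through the rescaling, which is handled by the annular correction already used in the proof of Lemma~\ref{estHSDL}.
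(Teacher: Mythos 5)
Your plan is sound in spirit but takes a genuinely different route from the paper. The paper's own proof is a short \emph{comparison} argument: it observes that hypotheses (9)--(10), added to those of Theorem~\ref{representation}, make the hypotheses of the pre-existing representation theorem \cite[Theorem 5.1]{MMOZ} hold; that theorem already produces the representation of $I_p$ with densities $H_p$, $h_p$. Combining this with the differentiation identity of Theorem~\ref{thm4.3FHP} (evaluated at the affine/jump blow-up data, where $\mu=\mathcal L^N$ on the bulk and $\mu=\mathcal H^{N-1}\lfloor S_u$ on the jump set) and the cell formulae \eqref{fdef}, \eqref{Phidef}, one gets $f=H_p$ and $\Phi=h_p$ for a.e.\ $x_0$, and then everywhere by continuity of both sides in $x_0$. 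Your proposal, instead, unpacks the cell formulae \eqref{fdef}, \eqref{Phidef} directly: rescale, freeze the $x$-dependence via (9) and \eqref{(psi4)}, pass from $W$ to $W^\infty$ via (10) in the $p=1$ surface case, and match the result against \eqref{906}, \eqref{907}. This is the same pattern the paper itself used inside Theorem~\ref{representation} for the $p>1$ identification $\Phi=h_p$, and it can be made to work, but it amounts to re-deriving a substantial portion of \cite[Theorem 5.1]{MMOZ} from scratch; the paper's comparison argument is decisively shorter because all of that technical content is already encapsulated in the cited result.

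Two points in your plan deserve more care. First, you call the bulk identification ``technically easier,'' but for $p=1$ it faces the same concentration issue you flag for the surface term: the relaxing sequences have $\nabla w_n$ converging only weakly-$*$ as measures, so they can concentrate, and showing that the relaxed bulk cell energy nevertheless coincides with the single-infimum $H_1$ (which uses $W$, not $W^\infty$, and admits only $L^1$ gradients) is a nontrivial step already present in \cite[Theorem 2.16]{CF1997}. Second, $m(\xi(\cdot-x_0),B;Q(x_0,\varepsilon))$ is a \emph{double} infimum --- over pairs $(u,U)\in\mathcal C_{HSD^p_1}$ and then, inside $\mathcal F=I_p$, over approximating $SBV$ sequences --- whereas $H_p$ in \eqref{906} is a single infimum over $SBV$ competitors with a constraint directly on $\nabla u$. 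Commuting these infima and eliminating the auxiliary field $U$ is exactly what \cite[Theorem 5.1]{MMOZ} (or, if done from scratch, a cut-and-paste argument in the style of \cite[Propositions 3.1 and 4.1]{CF1997}) supplies; your phrase ``a diagonal argument then identifies $f$ with the frozen-$x_0$ cell formula'' conceals the bulk of the real work.
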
 

\begin{proof}[Proof]
As seen in the previous proof, $I_p(g,G;\cdot)$ is the restriction to 
$\mathcal O(\Omega)$ of a Radon measure.
Assuming (9),  
putting together Theorem \ref{thm4.3FHP}, \eqref{fdef}
and \cite[Theorem 5.1]{MMOZ},
it follows that for every $x_0\in \Omega$, $\xi,B \in \mathbb R^{d\times N}$,
we have	
$$f(x_0,\xi, B) =H_p(x_0, \xi,B).$$
On the other hand, by (9), (10), \ref{thm4.3FHP}, \eqref{Phidef} and 
\cite[Theorem 5.1]{MMOZ}, we conclude that
$$\Phi(x_0,\lambda,\theta,\nu) =  h_p(x_0, \lambda - \theta, \nu),$$
for every $x_0\in \Omega$, $\lambda, \theta \in \mathbb R^d$ and 
$\nu \in \mathbb S^{N-1}$, where $h_p$ is the function given in \eqref{907}.
\end{proof}

\subsection{Homogenization problems}\label{hom}

Our method also applies to homogenization problems like the one considered in \cite{AMMZ}. Indeed, it improves the result therein allowing us to consider also the linear growth case, as well as Carathéodory, rather than continuous, bulk energy densities, as the following result states.

\begin{theorem}\label{homogenization}
	Let $p \geq 1$ and $\Omega \subset \mathbb R^N$ be a bounded, open set.  Let $\varepsilon \to 0^+$ and
	consider $E_\varepsilon$ given by 
	\begin{equation*}
		E_\varepsilon(u)\coloneqq \int_\Omega W(x/\varepsilon,\nabla u(x))\,\de x+\int_{\Omega\cap S_u} \psi(x/\varepsilon,[u](x),\nu_u(x))\,\de\cH^{N-1}(x),
	\end{equation*}
	where  $W\colon\Omega\times\R{d\times N}\to[0,+\infty)$ is a Carathéodory function and $\psi\colon\Omega\times\R{d}\times\S{N-1}\to[0,+\infty)$ is a continuous function, both 
	$Q$-periodic in the first variable and such that they satisfy \eqref{(W1)_p}-\eqref{(psi1)}. Let $(g,G) \in SD(\Omega)$
	and let $I_{p,hom}$ be the functional defined by
	\begin{equation}\label{102hom}
		I_{p,hom}(g,G)\coloneqq \inf\Big\{\liminf_{\varepsilon\to 0 } E_\varepsilon(u_\varepsilon): u_\varepsilon \in SBV(\Omega;\R{d}), u_\varepsilon\wSD{*}(g,G)\Big\}.
	\end{equation}
	
	Then, there exist 
	$f_{hom}: \mathbb R^{d \times N} \times \mathbb R^{d \times N} \to [0,+\infty)$, $\Phi_{hom}: \mathbb R^d \times S^{N-1}\to [0, +\infty)$ such that  
	\begin{align*}
		I_{p,hom}(g,G) =  \int_\Omega f_{hom}(\nabla g(x),G(x)) \, \de x + 
		\int_{\Omega\cap S_g}\Phi_{hom}([g](x),\nu_g(x)) \, \de \mathcal H^{N-1}(x),
	\end{align*}
	where the limiting energy densities are given by
	\begin{align}\label{fhomdef}
		f_{hom}(x_0,\xi, B):=	\limsup_{\varepsilon \to 0^+}\frac{m(\xi(\cdot-x_0),B; Q(x_0,\varepsilon))}{\varepsilon^N},
	\end{align}
	\begin{align}\label{Phihomdef}
		\Phi_{hom}(x_0,\lambda,\theta,\nu):= \limsup_{\varepsilon \to 0^+}\frac{m(v_{\lambda, \theta,\nu}(\cdot-x_0), 0; Q_{\nu}(x_0,\varepsilon))}{\varepsilon ^{N-1}},
	\end{align}
	for all $x_0\in \Omega$, $\lambda,  \theta \in \mathbb R^d$, 
	$\xi, B \in \mathbb R^{d \times N}$ and $\nu \in \mathbb S^{N-1}$. In the above expressions $0$ denotes the zero $\mathbb R^{d \times N}$ matrix,  
	$v_{\lambda,\theta, \nu}(y) := \begin{cases} 
		\lambda, &\hbox{if } y\cdot \nu > 0\\
		\theta, &\hbox{ if } y\cdot \nu \leq 0
	\end{cases}$,
	the functional $m\colon SD(\Omega)
	\times \mathcal O_\infty(\Omega)\to [0,+\infty)$ is given by \eqref{mdef} with $L=1$ and $\mathcal F= I_{p,hom}$, and $\mathcal C_{HSD^p_1}(g,G;O)$ is given by \eqref{classM}, taking into account that, if $p >1$, $HSD^p_1(\Omega)$ in Definition \ref{Def2.1} coincides with the set of fields $(g, G)\in SD(\Omega)$ such that 
	$G \in L^p(\Omega;\mathbb R^{d\times N})$.
	\end{theorem}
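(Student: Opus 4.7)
The plan is to apply Theorem \ref{GMthmHSD} with $L=1$ to the localized functional $I_{p,hom}(\cdot,\cdot;\cdot)$, defined on $\cO(\Omega)$ by restricting the integrals defining $E_\eps$ and the convergence $u_\eps\wSD{*}(g,G)$ to each open set $O\in\cO(\Omega)$. Assumption (H4) is a consequence of the coercivity and growth bounds \eqref{W4} and \eqref{(psi1)}: the lower bound follows from $L^p$-weak lower semicontinuity of $\int|\cdot|^p$ and total-variation lower semicontinuity applied to the defining sequences, while the upper bound is produced by a recovery sequence given by Theorem \ref{appTHMCF} and the $p$-Lipschitz estimate \eqref{(W1)_p}. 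Locality (H3) is immediate because both the energies and the target convergences are set-local. Lower semicontinuity (H2) with respect to the $HSD^p_1$-convergence is obtained by a standard diagonalization: given $(g_n,G_n)\to(g,G)$ in $HSD^p_1(O)$ and near-optimal $\eps$-sequences for each $I_{p,hom}(g_n,G_n;O)$, a diagonal extraction in $n$ and $\eps$ competes in $I_{p,hom}(g,G;O)$.

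The verification of (H1), that $I_{p,hom}(g,G;\cdot)$ is the trace on $\cO(\Omega)$ of a Radon measure, is the technical point. It is obtained from the De~Giorgi--Letta criterion, for which the non-trivial ingredient is nested subadditivity,
\begin{equation*}
I_{p,hom}(g,G;O_3) \;\leq\; I_{p,hom}(g,G;O_2) + I_{p,hom}(g,G;O_3\setminus\overline{O_1}),
\qquad O_1 \ccc O_2 \subseteq O_3.
\end{equation*}
This is proved exactly as in the corresponding step of the proof of Theorem \ref{representation}: one glues two near-optimal sequences along a layer $\partial O_{\rho_0}$ whose radius $\rho_0$ is chosen via Fatou's lemma so that the trace mismatch tends to $0$ in $L^1$, and the resulting spurious jump on $\partial O_{\rho_0}$ is controlled through the linear upper bound in \eqref{(psi1)}. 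The fact that $W(\cdot/\eps,\cdot)$ is merely Carath\'eodory plays no role here, since this construction depends only on the two-sided bounds on $W$ and $\psi$.

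With (H1)--(H4) secured, Theorem \ref{GMthmHSD} delivers an integral representation with densities $f_{hom}(x_0,a,\xi,B)$ and $\Phi_{hom}(x_0,\lambda,\theta,\nu)$ given by \eqref{f}--\eqref{Phi}. Since $u_\eps\wSD{*}(g,G)$ if and only if $u_\eps+c\wSD{*}(g+c,G)$ for every $c\in\R{d}$, $I_{p,hom}$ is translation invariant in its first argument, so Remark \ref{traslinv} removes the dependence on $a$ in $f_{hom}$ and reduces $\Phi_{hom}$ to a function of the difference $\lambda-\theta$, yielding the formulas \eqref{fhomdef}--\eqref{Phihomdef}.

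The hard part will be to show that $f_{hom}$ and $\Phi_{hom}$, so constructed, are independent of the base point $x_0$, which is exactly the homogenization content of the statement and is not accessible from the abstract machinery of Theorem \ref{GMthmHSD} alone. This $x$-homogeneity must be extracted from the $Q$-periodicity of $W(y,\cdot)$ and $\psi(y,\cdot,\cdot)$ in the first variable. The strategy is: given $x_0,x_0'\in\Omega$ and a near-minimizing sequence for $I_{p,hom}(\xi(\cdot-x_0),B;Q(x_0,\eps))$, translate it by $x_0'-x_0$ and correct the boundary condition on $\partial Q(x_0',\eps)$ by an integer-multiple-of-$\eps$ lattice shift of the microstructure inside each periodicity cell, so that the translation leaves $E_\eps$ invariant by $Q$-periodicity. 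The delicate point is the coupling of the two small scales, namely the cube size $\eps$ of the global-method blow-up in \eqref{fhomdef}--\eqref{Phihomdef} and the hidden homogenization scale inside $I_{p,hom}$, which must be handled without the uniform continuity of $W$ in $x$ assumed in \cite{AMMZ}; this is overcome by an $L^1$-density approximation of $W$ by periodic continuous densities preserving \eqref{(W1)_p}, \eqref{W3} and \eqref{W4}, and, in the case $p=1$, by the same recession-function argument invoked in the proof of Theorem \ref{contrepresentation}.
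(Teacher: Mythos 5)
Your overall architecture is the same as the paper's: verify (H1)--(H4) for the localized $I_{p,hom}$, invoke Theorem \ref{GMthmHSD} with $L=1$, kill the $a$-dependence by translation invariance in the first argument via Remark \ref{traslinv}, and then establish independence of the cell densities from the base point $x_0$ using periodicity. Your treatment of (H1)--(H4) (nested subadditivity as in Theorem \ref{representation}, the $L^1\times L^p_{\rm weak}$ diagonal argument for (H2), coercivity/growth for (H4)) coincides in substance with the paper's reference to \cite[Lemma A.1, Proposition A.2]{AMMZ}.

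Where your proposal goes astray is in the $x_0$-independence step. You correctly identify that this is the homogenization content and that it must come from the $Q$-periodicity of $W(\cdot,A)$ and $\psi(\cdot,\lambda,\nu)$, but the route you sketch --- translating a near-minimizer of the cell problem and then ``correcting'' by lattice shifts inside $E_\eps$, followed by an ``$L^1$-density approximation of $W$ by periodic continuous densities'' to decouple the blow-up scale from the homogenization scale --- is both unnecessary and not actually a well-formed argument. The density-approximation step would require passing to the limit inside a double infimum (the cell-problem infimum in \eqref{mdef}, whose competitors each carry a further $\liminf_{\eps}$ in $I_{p,hom}$), and you give no mechanism for doing so while preserving both the value of $m$ and the hypotheses \eqref{(W1)_p}--\eqref{W4}. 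The paper sidesteps all of this: it notes that $I_{p,hom}$ itself is translation invariant in the domain variable (not just lattice-periodically), i.e.\ $I_{p,hom}(u(\cdot-z),G(\cdot-z);O+z)=I_{p,hom}(u,G;O)$ for \emph{every} $z\in\R{N}$, which is the content of \cite[Lemma 4.3.3]{BFM} and is proved exactly as \cite[Lemma 3.7]{BDV}: take an admissible sequence $u_\eps$ for $I_{p,hom}(u,G;O)$, shift it by $\eps\lfloor z/\eps\rfloor$ (which leaves $E_\eps$ exactly invariant by $Q$-periodicity), and observe that $\eps\lfloor z/\eps\rfloor\to z$ as $\eps\to 0^+$. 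This is a purely measure-theoretic argument; it uses only periodicity and the growth bounds, never continuity of $W$ in $x$, and it operates at the level of the already-relaxed functional $I_{p,hom}$, so there is no ``coupling of two scales'' to resolve at all. Once this translation invariance of $I_{p,hom}$ is in hand, the $x_0$-independence of \eqref{fhomdef}--\eqref{Phihomdef} follows immediately from their definitions, with no approximation of $W$ and no recession-function argument (the latter is only relevant to identifying the densities with explicit homogenized formulas under the extra hypotheses of Theorem \ref{contrepresentation}, which is not claimed in this statement).
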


As in the case of Theorem \ref{representation}, the proof of this theorem amounts to the verification that the functional $I_{p,hom}$ satisfies all of the assumptions of Theorem \ref{GMthmHSD}, we omit the details. We also refer to \cite[Lemma A.1 and Proposition A.2]{AMMZ}, where the arguments were presented in the case $p>1$, but they can be repeated word for word if $p=1$. 
We point out that $f_{hom}$ is actually independent of $x_0$ and $a$ and $\Phi_{hom}$ is independent of $x_0$, due to the fact that $I_{p,hom}$ verifies the conditions of \cite[Lemma 4.3.3]{BFM} which in turn can be proven in full analogy with \cite[Lemma 3.7]{BDV}. 	  

Notice that, in view of the results in \cite{AMMZ}, in the case $p >1$ and assuming \eqref{(psi2)}-(9),
the densities given by \eqref{fhomdef} and \eqref{Phihomdef} coincide with the bulk and surface energy densities $H_{hom}$ and $h_{hom}$ obtained in \cite[eq. (1.11) and (1.12), respectively]{AMMZ}. In particular, when $p>1$ 
\eqref{Phihomdef} admits the equivalent representation (see \cite[Proposition 3.5]{AMMZ}),
	\begin{equation*}
		\begin{split}
			\!\! \Phi_{\hom}(\lambda,\theta,\nu)\coloneqq \limsup_{T\to+\infty}
			\frac1{T^{N-1}}
			\inf\bigg\{ & \int_{(TQ_\nu)\cap S_u}
			\!\!\!\! \!\!\!\! \psi(x,[u](x),\nu_u(x))\,\de\cH^{N-1}(x): 
			u\in SBV(T Q_\nu;\mathbb R^d), \\ 
			&\, u|_{\partial(TQ_\nu)}(x)=v_{\lambda,\theta, \nu}(x),\, \nabla u(x) =0 \hbox{ a.e. in } TQ_\nu\bigg\}
		\end{split}
	\end{equation*}
	for every $(\lambda,\theta, \nu)\in\mathbb R^d\times\mathbb R^d \times\mathbb S^{N-1}$.

\medskip

\subsection{Functionals arising in the analysis of second-order structured deformations}\label{SOSD}
	
As a further application of our abstract global method for relaxation, we recover the integral representation for one of the energies appearing in \cite{BMMO2017}.
In this paper a model for second-order structured deformations is proposed in the space $SBV_2$, giving rise to two energies (see \cite[Theorem 3.2]{BMMO2017}).
This decomposition relies strictly on hypotheses (I) and (II) below and \eqref{psi_0}-\eqref{(psi1)} from  Theorem \ref{representation}, but in the matrix setting.
Although the first of these energies 
does not satisfy the conditions of Theorem \ref{GMthmHSD},
we will apply our result 
to the second (to avoid confusion with the notation used in the previous applications we denote it here by $J$) which is defined on matrix-valued structured deformations, $J\colon SD(\Omega;\mathbb R^{d\times N}) \to [0,+\infty)$,
and is given by
\begin{align*}
J(G, \Gamma)&:=\inf\Bigg\{\liminf_{n\to+\infty} 
\left[\int_\Omega W(x,v_n(x),\nabla v_n(x)) \, \de x + 
\int_{S_{v_n}} \Psi(x, [v_n](x), \nu_{v_n}(x)) \,\de \mathcal H^{N-1}(x)\right]:  \\
&\hspace{2,5cm} v_n \in SBV(\Omega;\mathbb R^{d \times N}), v_n \to G 
\hbox{ in } L^1(\Omega;\mathbb R^{d \times N}), \nabla v_n \overset{\ast}{\rightharpoonup} \Gamma \mbox{ in } 
\mathcal M(\Omega;\mathbb R^{d\times N^2}) \Bigg\}.
\end{align*}

It was proved in \cite[Proposition 4.6]{BMMO2017} that the functional $J$ admits
the following alternative characterization, 
\begin{align}\label{I2Gfixed}
J(G, \Gamma)&:=\inf\Bigg\{\liminf_{n\to+\infty} 
\left[\int_\Omega W(x,G(x),\nabla v_n(x)) \, \de x + 
\int_{S_{v_n}} \Psi(x, [v_n](x), \nu_{v_n}(x)) \,\de \mathcal H^{N-1}(x)\right]: \nonumber \\
&\hspace{2,5cm} v_n \in SBV(\Omega;\mathbb R^{d \times N}), v_n \to G 
\hbox{ in } L^1(\Omega;\mathbb R^{d \times N}), \nabla v_n \overset{\ast}{\rightharpoonup} \Gamma \mbox{ in } 
\mathcal M(\Omega;\mathbb R^{d\times N^2}) \Bigg\},
\end{align}
in particular the density of the bulk term does not depend explicitly on the sequence $v_n$, as we can fix the second variable equal to $G$, but only on the gradient of $v_n$.

In the above expression the density $W$ satisfies the hypotheses 
\begin{enumerate}
\item[(I)]\label{(W1)_psec}  (Lipschitz continuity) there exists a constant $C_W >0$ such that, for all $x\in\Omega$, $A_1,A_2 \in \R{d\times N}$ and 
$M_1, M_2 \in \mathbb R^{d \times N^2}$,
\begin{equation*}
		|W(x,A_1,M_1) - W(x,A_2,M_2)| \leq C_W (|A_1 - A_2|+ |M_1-M_2|);
\end{equation*}
\item[(II)] \label{W4sec} there exists $c_W>0$ such that, for every $(x,A, M)\in \Omega \times \mathbb R^{d\times N} \times \mathbb R^{d \times N^2}$,
	\begin{equation*}
		\frac{1}{c_W}( |A|+ |M|)-c_W\leq W(x,A, M) \leq c_W(1+|A|+|M|);
	\end{equation*}
\item[(III)] \label{modcont} there exists a continuous function $\omega_W:[0,+\infty)\to[0,+\infty)$ such that 
$\displaystyle \lim_{t \to 0^+} \omega_W(t)=0$ and
$$|W(x_1,A, M)-W(x_0,A,M)|\leq\omega_W(|x_1-x_0|)(1+|A|+ |M|),$$
for every $x_0, x_1 \in \Omega$, $A \in \mathbb R^{d\times N}$, 
$M \in \mathbb R^{d \times N^2}$;
\item[(IV)]\label{Winftysec} 
there exists $\alpha \in (0,1)$ and $L>0$ such that 	
$$\bigg |W^{\infty}(x,A,M) - \frac{W(x,A,tM)}{t}\bigg| \leq \frac{C}{t^{\alpha}},$$ 	
for all  $t>L$, $x \in \Omega$, $A \in \mathbb R^{d \times N}$ and 
$M \in \mathbb R^{d \times N^2}$ with $|M|=1,$
where $W^{\infty}$ denotes the recession function at infinity of $W(x,A,\cdot)$; 
\end{enumerate}
and $\Psi$ satisfies, in the matrix setting, \eqref{psi_0}-\eqref{(psi4)} considered in Theorem \ref{representation}.

Under hypotheses (I), (II), \eqref{psi_0}-\eqref{(psi1)} considering the localized version of $J$, defined in 
$SD(\Omega;\mathbb R^{d \times N})\times \mathcal O(\Omega)$ by
\begin{align*}
J(G, \Gamma;O)&:=\inf\Bigg\{\liminf_{n\to+\infty} 
\left[\int_O W(x,G(x),\nabla v_n(x)) \, \de x + 
\int_{S_{v_n}\cap O} \Psi(x, [v_n](x), \nu_{v_n}(x)) \,\de \mathcal H^{N-1}(x)\right]: \nonumber \\
&\hspace{2,5cm} v_n \in SBV(O;\mathbb R^{d \times N}), v_n \to G 
\hbox{ in } L^1(O;\mathbb R^{d \times N}), \nabla v_n \overset{\ast}{\rightharpoonup} \Gamma \mbox{ in } 
\mathcal M(O;\mathbb R^{d\times N^2}) \Bigg\},
\end{align*}
it was shown in \cite[Theorem 4.5]{BMMO2017} that $J$ is the restriction to the open subsets of $\Omega$ of a Radon measure. Standard diagonalization arguments prove that it is sequentially lower semicontinuous in 
$L^1(O;\mathbb R^{d \times N})\times \mathcal M(O;\mathbb R^{d \times N^2})$, from which locality follows.

Using the characterization of $J$ given in \eqref{I2Gfixed}
it is also easy to see that the growth hypothesis (H4) from Theorem \ref{GMthmHSD} holds. 

Hence, denoting by $m$ the functional defined, in the matrix setting, by \eqref{mdef} with $L=1$, this result applies to yield the following representation:
\begin{align}\label{reprelaxsec}
J(G, \Gamma) =  \int_\Omega f(x, G(x),\nabla G(x), \Gamma(x)) \, \de x 
+ \int_{\Omega\cap S_G}\Phi(x, [G](x), \nu_G(x)) \, \de \mathcal H^{N-1}(x),
\end{align} 
where
\begin{align*}
f(x_0, A, B, D) :=	\limsup_{\varepsilon \to 0^+}
\frac{m(A + B(\cdot-x_0), D; Q(x_0,\varepsilon))}{\varepsilon^N},
\end{align*}
and
\begin{align*}
\Phi(x_0,\lambda-\theta, \nu) := \limsup_{\varepsilon \to 0^+}
\frac{m(u_{\lambda-\theta,\nu}, 0; Q_{\nu}(x_0,\varepsilon))}{\varepsilon ^{N-1}},
\end{align*}
for all $x_0\in \Omega$, $A, \lambda, \theta \in \mathbb R^{d\times N}$, 
$B, D \in \mathbb R^{d \times N^2}$, $\nu \in \mathbb S^{N-1}$, 
where $0$ denotes the zero $\mathbb R^{d \times N^2}$ matrix and 
$u_{\lambda-\theta,\nu}(y) := \begin{cases} 
\lambda - \theta, &\hbox{if } y\cdot \nu > 0\\
0, &\hbox{ if } y\cdot \nu \leq 0.\end{cases}$

The reasoning is similar to the one presented in Theorem \ref{representation},
without the translation invariance, as hypotheses (I) and (II) ensure that $W(\cdot,G(\cdot),\cdot)$ is Carathéodory in 
$\Omega \times \mathbb R^{d \times N^2}$. 

Assuming, in addition, that hypotheses (III), (IV), \eqref{(psi2)}-\eqref{(psi4)} hold, we will now show that the relaxed densities $f$ and $\Phi$ coincide with those obtained in \cite[Theorems 3.2 and 5.7]{BMMO2017}, to this end we use the alternative characterization of $J$ given in \eqref{I2Gfixed}.
Theorems 3.2 and 5.7 in \cite{BMMO2017} provide the following integral representation
for $J$
$$J(G, \Gamma) =  \int_\Omega W_2(x, G(x),\nabla G(x), \Gamma(x)) \, \de x 
+ \int_{\Omega\cap S_G}\gamma_2(x, [G](x), \nu_G(x)) \, \de \mathcal H^{N-1}(x),$$
where
\begin{align}\label{W2}
&W_{2}(x_0,A,B,D)=\hspace{0cm}\inf_{u\in SBV(Q;{\mathbb{R}}^{d\times N})}%
\hspace{0cm}\left\{ \int_{Q}W(x_0,A,\nabla u(y))\,\de y+\int_{S_{u}\cap Q}\hspace{%
0cm}\Psi(x_0,[u](y),\nu _{u}(y))\,\de\cH^{N-1}(y):\right. \nonumber\\
&\left. \hspace{7cm}u|_{\partial Q}(y)=B\cdot y,\;\int_{Q}\nabla
u(y)\,\de y=D\right\},
\end{align}
\begin{align}\label{gamma2}
&\gamma _{2}(x_0,A,\lambda-\theta ,\nu )=\hspace{0cm}\inf_{u\in SBV(Q_{\nu };{%
\mathbb{R}}^{d\times N})}\hspace{0cm}\left\{ \int_{Q_{\nu }}\hspace{0cm}%
W^{\infty }(x_0,A,\nabla u(y))\,\de y+\int_{S_{u}\cap Q_{\nu }}\hspace{0cm}\Psi
(x_0,[u](y),\nu _{u}(y))\,\de\cH^{N-1}(y):\right. \nonumber\\
&\left. \hspace{7cm}u|_{\partial Q_{\nu }}
=u_{\lambda-\theta ,\nu},\;\int_{Q_{\nu }}\nabla u(y)\,\de y=0\right\}.
\end{align}
In order to show that the densities in \eqref{reprelaxsec} are given by \eqref{W2} and \eqref{gamma2},
$$f(x_0,A,B,D)=  W_2(x_0,A,B,D) \quad \mbox{ and } \quad
\Phi(x_0,\lambda-\theta, \nu) = \gamma_2(x_0, A,\lambda -\theta, \nu),$$
for all $x_0 \in \Omega$,  $A, \lambda, \theta \in \mathbb R^{d\times N}$, 
$B, D \in \mathbb R^{d \times N^2}$, $\nu \in \mathbb S^{N-1}$, we begin by
stressing the fact that the dependence of $\gamma_2$ on $A$ is fictitious. Indeed assumptions (I) and (IV)  
guarantee that $W^\infty$ 
does not depend on $A$, i.e. 
$$W^\infty(x,A,M)= W^{\infty}(x, 0,M), \mbox{ for a.e. } x \in \Omega, 
\forall A \in \mathbb R^{d \times N}, M \in \mathbb R^{d \times N^2},$$ 
where $0$ represents the zero matrix in $\mathbb R^{d \times N}$.

As $x_0$ and $A$ are fixed, we may invoke Propositions 3.1 and 4.1 in \cite{CF1997} to conclude that
\begin{align*}
&W_{2}(x_0,A,B,D)=\hspace{0cm}\inf\hspace{0cm}\Bigg\{\liminf_{n\to\infty}
\left[ \int_{Q}W(x_0,A,\nabla u_n(y))\,\de y+\int_{S_{u_n}\cap Q}
\hspace{0cm}\Psi(x_0,[u_n](y),\nu _{u}(y))\,\de\cH^{N-1}(y)\right]: \\
&\hspace{3.5cm}u_n\in SBV(Q;{\mathbb{R}}^{d\times N}), 
u_n \to B\cdot y \mbox{ in } L^1(Q;\mathbb R^{d\times N}), 
\nabla u_n \overset{\ast}{\rightharpoonup} D \mbox{ in } 
\mathcal M(Q;\mathbb R^{d \times N^2})\Bigg\},
\end{align*}
and
\begin{align}\label{gamma2seq}
&\gamma _{2}(x_0,\lambda-\theta ,\nu) =\inf\Bigg\{\liminf_{n\to\infty} \left[\int_{Q_{\nu }}W^{\infty }(x_0,0,\nabla u_n(y))\,\de y
+\int_{S_{u_n}\cap Q_{\nu}}
\Psi(x_0,[u_n](y),\nu_{u_n}(y))\,\de\cH^{N-1}(y)\right]: \nonumber\\
&\hspace{3.5cm} u_n\in SBV(Q_{\nu};\mathbb R^{d\times N}), 
u_n \to u_{\lambda-\theta,\nu} \mbox{ in } L^1(Q_\nu;\mathbb R^{d\times N}),
\nabla u_n \overset{\ast}{\rightharpoonup} 0\mbox{ in } 
\mathcal M(Q;\mathbb R^{d \times N^2})\Bigg\},
\end{align}
respectively.

Reasoning as in the proof of Theorem \ref{representation},
by Theorem \ref{thm4.3FHP} and the fact that 
$\mathcal F= J$,  
for every $x_0\in \Omega$, $\lambda, \theta \in \mathbb R^{d}$ and
$\nu \in \mathbb S^{N-1}$, we have
\begin{align*}
\Phi(x_0,\lambda,\theta,\nu) &= \Phi(x_0,\lambda -\theta,\nu) = 
\limsup_{\varepsilon \to 0^+}\frac{m(u_{\lambda-\theta,\nu}(\cdot-x_0), 0; Q_{\nu}(x_0,\varepsilon))}{\varepsilon ^{N-1}} \\
&=\limsup_{\varepsilon \to 0^+}\frac{J(u_{\lambda-\theta,\nu}(\cdot-x_0), 0; Q_{\nu}(x_0,\varepsilon))}{\varepsilon ^{N-1}}\\
&= \limsup_{\varepsilon \to 0^+}\frac{1}{\varepsilon^{N-1}}
\inf\Bigg\{\liminf_{n\to+\infty}\Bigg[\int_{Q_\nu(x_0,\varepsilon)} 
W(x,u_{\lambda - \theta,\nu}(x-x_0),\nabla v_n(x)) \, \de x \\
&\hspace{6,7cm} + \int_{Q_\nu(x_0,\varepsilon)\cap S_{v_n}}
\Psi(x, [v_n](x),\nu_{v_n}(x)) \,\de\mathcal H^{N-1}(x)\Bigg] :\\
\\
&\hspace{3,7cm} v_n \in SBV(Q_\nu(x_0,\varepsilon);\mathbb R^{d\times N}), 
\nabla v_n \overset{\ast}{\rightharpoonup} 0 
\mbox{ in } \mathcal M(Q;\mathbb R^{d \times N^2}),\\
&\hspace{3,7cm}  v_n \to u_{\lambda-\theta,\nu}(\cdot-x_0) \mbox{ in }  L^1(Q_\nu(x_0,\varepsilon);\mathbb R^{d\times N})\Bigg\} \\
&= \limsup_{\varepsilon \to 0^+}
\inf\Bigg\{\liminf_{n\to+\infty} \Bigg[\int_{Q_\nu} \varepsilon 
W\left(x_0+\varepsilon y,u_{\lambda - \theta,\nu}(y),
\frac{1}{\varepsilon}\nabla u_n(y)\right) \, \de y \\
&\hspace{6,7cm} +\int_{Q_\nu\cap S_{u_n}}
\Psi(x_0+\varepsilon y, [u_n](y),\nu_{u_n}(y)) \,\de\mathcal H^{N-1}(y) \Bigg]:\\
&\hspace{2,7cm} u_n \in SBV(Q_\nu;\mathbb R^d), 
u_n \to u_{\lambda-\theta,\nu} \mbox{ in }  L^1(Q_\nu;\mathbb R^d),
\nabla u_n \overset{\ast}{\rightharpoonup} 0 
\mbox{ in } \mathcal M(Q;\mathbb R^{d \times N^2}) \Bigg\},
\end{align*}
where in the last equality we performed a change of variables. Using hypotheses (III)
and \eqref{(psi4)} first, and then (IV), we obtain

\vfill

\begin{align*}
\Phi(x_0,\lambda -\theta,\nu) &= \limsup_{\varepsilon \to 0^+}
\inf\Bigg\{\liminf_{n\to+\infty} \Bigg[\int_{Q_\nu} \varepsilon 
W\left(x_0,u_{\lambda - \theta,\nu}(y),
\frac{1}{\varepsilon}\nabla u_n(y)\right) \, \de y \\
&\hspace{6,7cm} +\int_{Q_\nu\cap S_{u_n}}
\Psi(x_0, [u_n](y),\nu_{u_n}(y)) \,\de\mathcal H^{N-1}(y) \Bigg]:\\
&\hspace{2,3cm} u_n \in SBV(Q_\nu;\mathbb R^d), 
u_n \to u_{\lambda-\theta,\nu} \mbox{ in }  L^1(Q_\nu;\mathbb R^d),
\nabla u_n \overset{\ast}{\rightharpoonup} 0 
\mbox{ in } \mathcal M(Q;\mathbb R^{d \times N^2}) \Bigg\}\\
&= \inf\Bigg\{\liminf_{n\to+\infty} \Bigg[\int_{Q_\nu}  
W^{\infty}\left(x_0,u_{\lambda - \theta,\nu}(y),\nabla u_n(y)\right) \, \de y 
+\int_{Q_\nu\cap S_{u_n}}\hspace{-0,75cm}
\Psi(x_0, [u_n](y),\nu_{u_n}(y)) \,\de\mathcal H^{N-1}(y) \Bigg]:\\
&\hspace{2,3cm} u_n \in SBV(Q_\nu;\mathbb R^d), 
u_n \to u_{\lambda-\theta,\nu} \mbox{ in }  L^1(Q_\nu;\mathbb R^d),
\nabla u_n \overset{\ast}{\rightharpoonup} 0 
\mbox{ in } \mathcal M(Q;\mathbb R^{d \times N^2}) \Bigg\}
\end{align*}
\begin{align*}
&= \inf\Bigg\{\liminf_{n\to+\infty} \Bigg[\int_{Q_\nu}  
W^{\infty}\left(x_0,0,\nabla u_n(y)\right) \, \de y 
+\int_{Q_\nu\cap S_{u_n}}\hspace{-0,5cm}
\Psi(x_0, [u_n](y),\nu_{u_n}(y)) \,\de\mathcal H^{N-1}(y) \Bigg]:\\
&\hspace{2,3cm} u_n \in SBV(Q_\nu;\mathbb R^d), 
u_n \to u_{\lambda-\theta,\nu} \mbox{ in }  L^1(Q_\nu;\mathbb R^d),
\nabla u_n \overset{\ast}{\rightharpoonup} 0 
\mbox{ in } \mathcal M(Q;\mathbb R^{d \times N^2}) \Bigg\}\\
&= \gamma_2(x_0, \lambda - \theta, \nu).
\end{align*}

We have thus proved the equality between the densities of the surface term
$$\Phi(x_0,\lambda,\theta,\nu) =  \gamma_2(x_0, \lambda - \theta, \nu),$$
for every $x_0\in \Omega$, $\lambda, \theta \in \mathbb R^{d \times N}$ and 
$\nu \in \mathbb S^{N-1}$, where $\gamma_2$ is given in \eqref{gamma2seq}.
 
The proof that $f(x_0,A,B,D)=  W_2(x_0,A,B,D)$ is similar and we omit the details.
 
Thus our integral representation for $J$, obtained via the global method given in Theorem \ref{GMthmHSD}, recovers the one proved in \cite[Theorems 3.2 and 5.7]{BMMO2017}, and we emphasize the fact that $\gamma_2$ given in \eqref{gamma2} does not really depend on the variable $A$.

\subsection{Multi-levelled structured deformations}\label{MSD}

The global method for relaxation that we propose in Theorem \ref{GMthmHSD}, allows us not only to recover the recursive relaxation procedure presented in \cite[Subsection 3.2 and Theorem 3.4]{BMMOZ2022}, since in each step the obtained densities satisfy hypotheses \eqref{(W1)_p}-(9), thus entitling us to apply Theorems \ref{representation} and \ref{contrepresentation}, but also to propose an alternative direct strategy.

Indeed, as it will be rigorously shown in \cite{BMMOZ2024}, one can associate to each multi-levelled structured deformation an energy satisfying hypotheses (H1)-(H4)
in Section \ref{gm}. The alternative procedure of assigning this energy 
very possibly yields an expression lower than the one obtained in \cite[Subsection 3.2]{BMMOZ2022}.

More precisely, referring to the case $L=2$ for simplicity of exposition, starting from \eqref{103}
and given $(g,G_1,G_2,G_3)\in HSD^p_2(\Omega)$, Theorem \ref{GMthmHSD} provides an integral representation for the functional
 \begin{equation*}
 	\begin{split}
 		I_3(g,G_1, G_2,G_3)\coloneqq \inf_{\{u_{n_1, n_2,n_3}\}\subset SBV(\Omega;\R{d})}\Big\{  \liminf_{n_1\to +\infty,n_2\to+\infty, n_3 \to +\infty} E(u_{n_1, n_2,n_3}): u_{n_1, n_2,n_3}\Hto(g,G_1,G_2,G_3), \\ 
 	\|\nabla g_{n_1}\|_{L^p(\Omega;\mathbb R^{d\times N})}, \|\nabla g_{n_1n_2}\|_{L^p(\Omega;\mathbb R^{d\times N})}, \|\nabla u_{n_1,n_2,n_3}\|_{L^p(\Omega;\R{d\times N})}
 	<+\infty\Big\},
 \end{split}
 \end{equation*}
 where $\displaystyle g_{n_1}:=\lim_{n_2,n_3}u_{n_1n_2n_3}$, $\displaystyle g_{n_1n_2}:=\lim_{n_3}u_{n_1n_2n_3}$, the convergences are those in Definition \ref{S000} and $I_3$ represents, by definition, the energy assigned to the three-levelled structured deformation $(g,G_1,G_2,G_3)$.

\bigskip

\subsection*{Acknowledgements}

The authors would like to thank the referee for his/her valuable comments which lead to the improvement of the manuscript.

The research of ACB was partially supported by National Funding from FCT -
Funda\c c\~ao para a Ci\^encia e a Tecnologia through project 
UIDB/04561/2020:  https://doi.org/10.54499/UIDB/04561/2020 
and also by GNAMPA, Programma Professori Visitatori, year 2023.

The research of JM was supported by GNAMPA, Programma Professori Visitatori, year 2022  and through FCT/Portugal through CAMGSD, IST-ID,
projects UIDB/04459/2020 and UIDP/04459/2020.
He also gratefully acknowledges the support and hospitality of Sapienza-University of Rome through the Programma
Professori Visitatori, year 2023.

EZ acknowledges the support of Piano Nazionale di Ripresa e Resilienza (PNRR) - Missione 4 ``Istruzione e Ricerca''
- Componente C2 Investimento 1.1, "Fondo per il Programma Nazionale di Ricerca e
Progetti di Rilevante Interesse Nazionale (PRIN)" - Decreto Direttoriale n. 104 del 2 febbraio 2022 - CUP 853D23009360006. She is a member of the Gruppo Nazionale per l'Analisi Matematica, la Probabilit\`a e le loro Applicazioni (GNAMPA) of the Istituto Nazionale di Alta Matematica ``F.~Severi'' (INdAM). 
She also acknowledges partial funding from the GNAMPA Project 2023 \emph{Prospettive nelle scienze dei materiali: modelli variazionali, analisi
asintotica e omogeneizzazione}.
The work of EZ is also supported by Sapienza - University of Rome through the projects Progetti di ricerca medi, (2021), coordinator  S. Carillo e Progetti di ricerca piccoli,  (2022), coordinator E. Zappale.
She gratefully acknowledges the hospitality and support of CAMGSD, IST-ID.

\bibliographystyle{plain}

\end{document}